\theoremstyle{plain}
\newtheorem{thm}{Theorem}[section]
\newtheorem{lma}[thm]{Lemma}
\newtheorem{cor}[thm]{Corollary}
\theoremstyle{definition}
\newtheorem{defi}[thm]{Definition}
\begin{document}

\title{Tight maps and holomorphicity}

\author{Oskar Hamlet}

\address{Department of Mathematics\\Chalmers University of Technology and the University of Gothenburg\\412 96 G\"OTEBORG\\SWEDEN}

\email{hamlet@chalmers.se}

\begin{abstract}
Tight maps were introduced and studied along tight homomorphisms by Burger, Iozzi and Wienhard with aims towards maximal representations. In this paper we show that, with the exception of maps from the Poincar\'e disc, tight maps into classical Hermitian symmetric spaces must be holomorphic or antiholomorphic. Together with previous results this completely classifies tight maps into classical codomains.
\end{abstract}

\maketitle


\section{Introduction}
Let $(\mathcal{X}_i,\omega_i)$, $i=1,2$, be Hermitian symmetric spaces of noncompact type paired with some choice of invariant K\"ahler forms. A map $\rho\colon\mathcal{X}_1\rightarrow\mathcal{X}_2$ is called \emph{totally geodesic} if the image of every geodesic in $\mathcal{X}_1$ is a geodesic in $\mathcal{X}_2$. A totally geodesic map $\rho\colon \mathcal{X}_ 1\rightarrow \mathcal{X}_2$ satisfies 
\begin{equation}\label{t2}
sup_{\Delta\in\mathcal{X}_1}\int_\Delta{\rho^*\omega_2}\leq sup_{\Delta\in\mathcal{X}_2}\int_\Delta{\omega_2}
\end{equation}
 where the supremum is taken over triangles with geodesic sides. We say that the map is \emph{tight} if equality holds in (\ref{t2}). 
There is  also a paralell notion of tightness for group homomorphisms when the codomain is a Hermitian Lie group.
We say that the homomorphism is tight if the norm of the bounded K\"ahler class is preserved under the pullback induced by the homomorphism.

The motivation for the study of tight maps and homomorphisms comes from a structure theorem of maximal representations due to Burger, Iozzi and Wienhard \cite{B9}. It states that the action on a Hermitian symmetric space $\mathcal{X}$ coming from a maximal representation leaves invariant a tightly embedded subspace $\mathcal{Y}\subset\mathcal{X}$.
It is a fundamental question to determine situations in which such a map is holomorphic or antiholomorphic.
Tight homomorphisms were also an important tool in the work of Kim and Pansu \cite{B2} where they investigated when a surface group representation could be approximated by Zariski dense representations.

Tight maps were introduced and extensively studied in \cite{B8}. In the paper they also classified all tight maps from the Poincar\'e disc. Among these they found both holomorphic and nonholomorphic tight maps; however, they were unable to find nonholomorphic tight maps from higher dimensional spaces. They asked if tight maps should always be (anti-) holomorphic in the higher dimensional case. In this paper we confirm this in the classical case.
\begin{thm}\label{main}
Let $\mathcal{X}_ 1$ and $\mathcal{X}_2$ be irreducible Hermitian symmetric spaces. Assume that $\mathcal{X}_ 1$ is not the Poincar\'e disc and that $\mathcal{X}_ 2$ is classical.
If $\rho\colon \mathcal{X}_ 1\rightarrow \mathcal{X}_2$ is a tight map, then it is (anti-) holomorphic. 
\end{thm}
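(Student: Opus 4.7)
The plan is to first reduce the statement to a Lie algebra problem and then perform a case analysis adapted to the classical types of $\mathcal{X}_2$. Since a tight map between Hermitian symmetric spaces is automatically totally geodesic, $\rho$ is induced by a Lie algebra homomorphism $\rho_\ast\colon\mathfrak{g}_1\to\mathfrak{g}_2$ which is itself tight as a Lie algebra homomorphism in the sense of \cite{A8}. Writing the Cartan decompositions $\mathfrak{g}_i=\mathfrak{k}_i\oplus\mathfrak{p}_i$, the invariant complex structures are given by $J_i=\mathrm{ad}(Z_i)\vert_{\mathfrak{p}_i}$ for suitable central elements $Z_i\in Z(\mathfrak{k}_i)$, so $\rho$ is holomorphic when $\rho_\ast(Z_1)=Z_2$ and antiholomorphic when $\rho_\ast(Z_1)=-Z_2$ (after normalizing the Kähler forms). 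Tightness becomes the equality $\rho_\ast^\ast\omega_2=\omega_1$ at the base point, and this is what I intend to exploit.

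Next I would decompose the defining representation $V_2$ of $\mathfrak{g}_2$, viewed as a classical matrix algebra preserving some form, into irreducibles for $\rho_\ast(\mathfrak{g}_1)$. Each irreducible summand corresponds to a tight embedding of $\mathfrak{g}_1$ into a simpler Hermitian algebra, and the total pullback of the Kähler form decomposes as a weighted sum of contributions coming from the individual summands. This reduces the classification problem to identifying the irreducible unitary representations of $\mathfrak{g}_1$ whose highest weight is compatible with a tight embedding into one of the classical series, together with the problem of assembling such summands so that the invariant form defining $\mathfrak{g}_2$ is preserved.

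For each irreducible summand I expect to use highest-weight theory to obtain a sharp inequality bounding the pullback of the Kähler form by the value of the highest weight on $Z_1$. The equality case should force the highest weight to be a nonnegative multiple of the fundamental weight at the Hermitian node of the Dynkin diagram of $\mathfrak{g}_1$, which is precisely the condition that the map corresponding to that summand intertwines the complex structures. Under the hypothesis that $\mathcal{X}_1$ is not the Poincaré disc, the rank-at-least-two or non-tube-type structure then prevents mixing holomorphic and antiholomorphic summands inside a single invariant form, so all summands must be of the same holomorphic type and $\rho$ itself must be (anti-) holomorphic.

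The main obstacle will be the combinatorial bookkeeping in the case analysis. Although each individual case is governed by a highest-weight computation, one has to verify for every pair of classical types that no exotic tight embedding exists which is neither holomorphic nor antiholomorphic as a whole. The subtlest point is in the orthogonal cases $\mathfrak{so}^\ast(2n)$ and $\mathfrak{so}(p,2)$, where real or quaternionic structures on the defining representation could in principle allow summands of mixed holomorphic type to be glued into a single embedding; showing that the non-tube-type or higher-rank hypothesis on $\mathcal{X}_1$ rules these possibilities out is what I expect to take the most care.
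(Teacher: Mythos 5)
There is a genuine gap at the heart of your proposal: the step where ``a sharp inequality bounding the pullback of the K\"ahler form by the value of the highest weight on $Z_1$'' is to be proved, with equality forcing the highest weight to be a multiple of the fundamental weight at the Hermitian node, is not an established fact --- it is essentially the theorem itself, and as stated it is not even true. For $su(1,1)$ every irreducible representation of \emph{odd} highest weight $k\omega$ gives a tight but non-holomorphic homomorphism into $su(l,l)$ (this is exactly why the Poincar\'e disc must be excluded from the theorem), so tightness of an irreducible summand does not single out the holomorphic highest weights; conversely, for $sp(4,\mathbb{R})$ the representation with highest weight $2\omega_1$ (twice the fundamental weight at the Hermitian node) is neither holomorphic nor tight, so ``multiple of the Hermitian fundamental weight'' is not equivalent to intertwining the complex structures. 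Your proposed mechanism for using the hypothesis that $\mathcal{X}_1$ is not the disc (``preventing mixing of holomorphic and antiholomorphic summands'') is therefore not where the hypothesis actually does its work; what is needed is a criterion that distinguishes tight from non-tight \emph{irreducible} pieces, and the paper obtains this by restricting to tightly embedded $su(1,1)$ (or $su(1,1)\oplus su(1,1)$) regular subalgebras via diagonal discs, where tightness is computable (odd versus even highest weight), and then showing for the key low-rank algebras $su(2,1)$, $sp(4,\mathbb{R})$ and $sp(4,\mathbb{R})\oplus su(1,1)$ that every non-holomorphic irreducible representation branches to an even (hence non-tight) $su(1,1)$-piece. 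The higher-rank case is then reduced to these via the composition lemmas (tightness of $h\circ f$ versus tightness of $h$ and $f$), not via a direct highest-weight inequality. Note also that your formulation of tightness as $\rho^*\omega_2=\omega_1$ at the base point is incorrect: for a tight totally geodesic map one has $\rho^*\omega_2=\lambda\,\omega_1$ with $|\lambda|$ determined by the ratio of ranks, and the usable criterion is the diagonal-disc equality $|\langle\rho\,d_1 Z, Z_2\rangle|=|\langle d_2 Z, Z_2\rangle|$.

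A second, more structural omission is the treatment of classical targets other than $su(p,q)$. Working directly with the defining representation of $so^*(2n)$, $so(p,2)$ or $sp(2n,\mathbb{R})$ runs into exactly the real/quaternionic gluing issues you flag at the end, and your proposal offers no mechanism to resolve them. The paper avoids this by first passing to the maximal tightly and holomorphically embedded tube-type subalgebra of the target, embedding that tightly and holomorphically into some $su(n,n)$, and composing; the only case that escapes this reduction, namely $\rho\colon su(n,1)\to so^*(2p)$ with $p$ odd (a non-tube-type source of rank one into a non-tube-type target), requires a separate counting argument with the standard representation (the paper's Lemma \ref{bla}). Without an analogue of these reductions and of that special case, your case analysis cannot be completed as sketched.
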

We also get a partial result for the exceptional Hermitian symmetric spaces.
\begin{thm}\label{bonus}
Let $\mathcal{X}'$ be the exceptional Hermitian symmetric space associated to the symmetric pair $(\mathfrak{e}_{6(-14)},\mathfrak{so}(10)+\mathbb{R})$. Further let $\mathcal{X}$ be an irreducible Hermitian symmetric space of rank at least two. 
If $\rho\colon \mathcal{X}\rightarrow \mathcal{X}'$ is a tight map, then it is (anti-) holomorphic. 
\end{thm}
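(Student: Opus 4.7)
The plan is to reduce Theorem~\ref{bonus} to Theorem~\ref{main} by analyzing the image of $\rho$ as a tight totally geodesic subspace of $\mathcal{X}'$. Since a tight map is totally geodesic by definition and $\mathcal{X}$ is irreducible, the associated Lie algebra homomorphism is either zero or injective; the former yields a constant map, which is not tight, so $\rho$ is an isometric totally geodesic embedding onto $Y := \rho(\mathcal{X}) \subset \mathcal{X}'$, and the factorization $\rho\colon \mathcal{X} \to Y$ inherits tightness.

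Normalizing the invariant K\"ahler form of a rank-$r$ Hermitian symmetric space so that $\sup_{\Delta} \int_\Delta \omega = r\pi$, the same supremum on any totally geodesic Hermitian subspace of strictly smaller rank is strictly less than $r\pi$; tightness of $\rho$ therefore forces $Y$ to have rank $r(\mathcal{X}')=2$, and consequently $\mathcal{X}$ itself has rank exactly $2$. The problem thus reduces to classifying tight totally geodesic isometric embeddings of rank-$2$ irreducible Hermitian symmetric spaces into $\mathcal{X}'$.

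Next I would enumerate the rank-$2$ totally geodesic Hermitian subspaces of $\mathcal{X}'$. Other than $\mathcal{X}'$ itself, they all arise from Hermitian subgroups of $E_{6(-14)}$ of classical type --- notably those associated to $SU(2,4)$, $SO(2,8)$, $SO^*(8)$, $Sp(4,\mathbb{R})$, and the maximal polydisc $SU(1,1) \times SU(1,1)$ --- as can be read off from the root system of $\mathfrak{e}_6$. If $Y \subsetneq \mathcal{X}'$ is such a classical subspace, Theorem~\ref{main} applied to $\rho\colon \mathcal{X}\to Y$ gives that $\rho$ is (anti-)holomorphic for the complex structure of $Y$; since the equal-rank tight inclusion $Y \hookrightarrow \mathcal{X}'$ preserves the K\"ahler class up to sign, the composition $\rho\colon \mathcal{X} \to \mathcal{X}'$ is (anti-)holomorphic. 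If instead $Y = \mathcal{X}'$, then $\rho$ is a surjective totally geodesic isometry and hence an isomorphism $\mathcal{X} \cong \mathcal{X}'$ of irreducible Hermitian symmetric spaces, which is necessarily (anti-)holomorphic.

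The main obstacle is the classification of rank-$2$ Hermitian subspaces of $\mathcal{X}'$ combined with the compatibility of tightness normalizations between $\mathcal{X}\to Y$ and $Y\hookrightarrow \mathcal{X}'$. Although the list of candidate subalgebras of $\mathfrak{e}_{6(-14)}$ is finite and tractable via the $E_6$ root system, one must carefully track how $\omega_{\mathcal{X}'}$ restricted to $Y$ relates to the intrinsic K\"ahler form of $Y$ so that tightness of each factor of the composition is equivalent to tightness of the whole, and rule out any hypothetical rank-$2$ Hermitian embedding into $\mathcal{X}'$ whose image is of exceptional type other than $\mathcal{X}'$ itself.
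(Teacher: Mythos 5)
There is a genuine gap, and it sits at the heart of your argument. Your plan factors $\rho$ through its image $Y=\rho(\mathcal{X})$ and then tries to deduce holomorphy of $\rho\colon\mathcal{X}\to\mathcal{X}'$ from (i) holomorphy of $\rho\colon\mathcal{X}\to Y$ and (ii) the claim that the inclusion $Y\hookrightarrow\mathcal{X}'$ ``preserves the K\"ahler class up to sign'' because it is tight and of equal rank. Step (i) is essentially vacuous: $\rho$ is an isomorphism onto its image, so Theorem \ref{main} applied to $\mathcal{X}\to Y$ gives no information. All the content is in step (ii), and there the argument is circular: tightness of the inclusion (which does follow from Lemma \ref{factor12}, since $Y$ is simple) together with the fact that its pullback class is $\pm$ a multiple of $\kappa_Y$ does \emph{not} imply that the inclusion is holomorphic --- that implication is exactly the kind of statement the theorem is supposed to prove, and it is false in general: the odd symmetric powers $\rho_{2l-1}\colon su(1,1)\to su(l,l)$ of Theorem \ref{suett} are tight and non-holomorphic. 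For the same reason your enumeration of ``rank-2 totally geodesic Hermitian subspaces read off from the root system of $\mathfrak{e}_6$'' begs the question: that list only captures holomorphically (regularly) embedded copies of those subalgebras, whereas the image of a hypothetical non-holomorphic tight map need not be a complex submanifold of $\mathcal{X}'$ at all, so it need not appear on your list. Separately, your rank reduction is based on a false premise: the supremum of $\int_\Delta\omega_{\mathcal{X}'}$ over triangles in a lower-rank totally geodesic subspace is computed with the \emph{restricted} form, not the intrinsically normalized one, and it can equal $r_{\mathcal{X}'}\pi$ --- diagonal discs are tight with rank-one image in any target. (The conclusion $\mathrm{rank}\,Y=2$ is still true here, but only because $\rho$ is injective and totally geodesic and $\mathrm{rank}\,\mathcal{X}\geq 2=\mathrm{rank}\,\mathcal{X}'$, not because of tightness.)

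The mechanism the paper uses, and which your proposal lacks, controls the \emph{target} side via tube type rather than via the image: restrict $\rho$ to the maximal tube-type subalgebra $\mathfrak{g}^T\subset\mathfrak{g}$, which is tightly and holomorphically embedded and of full rank (Theorem \ref{factor0}); the restriction is tight, and by Theorem \ref{nonholo} it would be non-holomorphic if $\rho$ were. Theorem \ref{factor0} then forces $\rho(\mathfrak{g}^T)$ to lie in the maximal tube-type subalgebra $e_{6(-14)}^T$, which is a \emph{proper} subalgebra because $e_{6(-14)}$ is not of tube type, hence classical; Lemma \ref{tva} (the classical case already established) then yields the contradiction. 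If you want to salvage your approach, you would need an independent argument that a tight embedding of the simple Hermitian algebra $\rho(\mathfrak{g})$ into $e_{6(-14)}$ is automatically holomorphic --- but that is the theorem itself, so some input like Theorem \ref{factor0} plus the classical case is unavoidable.
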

We have recently proved that there exists no nonholomorphic tight maps in the remaining exceptional cases \cite{B13}.
The results in \cite{B13} does not supersede those of this paper but rather uses ad hoc techniques for the remaining exceptional cases.

The results in this paper together with the results in \cite{B8}, \cite{B5} and \cite{B13} thus yield a complete classification of tight maps from irreducible Hermitian symmetric spaces.

\subsection{Outline of the paper and the proof}
As our proof is rather technical we outline here the main ideas and the structure of the paper. 

We start in section 2 by setting notation and discussing equivalent formulations. We will mainly approach the problem from the perspective of Lie algebra homomorphisms.

In section 3 we begin by giving a brief introduction to continuous bounded cohomology. We use this to investigate when the composition and the product of maps are tight. Roughly one could say that the composition (or product) of two maps is tight if and only if the individual maps are both tight. 

In section 4 we recall the basics from representation theory of semisimple Lie algebras. Representation theory is the study of homomorphisms $\rho'\colon\mathfrak{g}\rightarrow\mathfrak{gl}(n,\mathbb{C})$; we will use it to better understand homomorphisms $\rho\colon\mathfrak{g}\rightarrow\mathfrak{su}(p,q)$. We do this by considering $\rho$ as a homomorphism 
$\rho\colon\mathfrak{g}\rightarrow\mathfrak{gl}(p+q,\mathbb{C})$ whose image is contained in a subalgebra $\mathfrak{su}(p,q)\subset\mathfrak{gl}(,p+q,\mathbb{C})$. This causes some technical issues which we address in section 4.

In section 5 we combine what we have gathered so far into a new criterion for non-tightness in a limited setting. This criterion captures one of the main ideas of the paper. The broad strokes of the argument go as follows. 
Suppose that $\rho\colon\mathfrak{g}\rightarrow\mathfrak{su}(p,q)$ is an irreducible representation and $\mathfrak{g}_0\subset\mathfrak{g}$ a tightly embedded subalgebra. Restricting $\rho$ to $\mathfrak{g}_0$ we get a new representation $\rho|\colon\mathfrak{g}_0\rightarrow\mathfrak{su}(p,q)$ which no longer is irreducible. We can thus write $\rho|$ as a sum of irreducible representations, i.e. $\rho|=\sum_{i=1}^n\rho^i$. From section 4 we know that $\rho|$ then factors as $\iota\circ (\rho^1,...,\rho^n)\colon\mathfrak{g}_0\rightarrow\oplus_{i=1}^n\mathfrak{su}(p_i,q_i)\rightarrow\mathfrak{su}(p,q)$ with $\iota$ a  holomorphic embedding. Suppose now that one $\rho^i$ is nontight. By what we know about compositions and products this implies that $(\rho^1,...,\rho^n)$ is nontight. This in turn implies that the composition $\iota\circ(\rho^1,...,\rho^n)=\rho|$ is nontight. But that $\rho|$ is nontight implies that $\rho$ is nontight since the inclusion $\mathfrak{g}_0\subset\mathfrak{g}$ was assumed to be tight. We can thus show nontightness of a representation $\rho\colon\mathfrak{g}\rightarrow\mathfrak{su}(p,q)$ by considering the branching of $\rho$ when restricted to a tightly embedded subalgebra. 

In section 6 we begin by calculating which representations \\*$\rho\colon\mathfrak{su}(1,1)^{\oplus i}\rightarrow\mathfrak{su}(p,q)$ are tight for $i=1,2$. We then combine this knowledge with the criterion from section 5 to show, via calculation, that there are no tight nonholomorphic homomorphisms from $\mathfrak{sp}(4,\mathbb{R})$ or $\mathfrak{su}(2,1)$\ into $\mathfrak{su}(p,q)$. We also show that any tight homomorphism $\rho\colon\mathfrak{sp}(4,\mathbb{R})\oplus\mathfrak{su}(1,1)\rightarrow\mathfrak{su}(p,q)$ is holomorphic or antiholomorphic when restricted to $\mathfrak{sp}(4,\mathbb{R})$.

In section 7 we address the general case. We divide it into cases dependending on the rank of the domain. Suppose that $\rho\colon\mathfrak{g}_1\rightarrow\mathfrak{g}_2$ is tight and nonholomorphic.

If $\mathfrak{g}_1$ is of even real rank there is, by the classification of holomorphic tight maps in \cite{B5}, a tight and holomorphic embedding $\iota\colon\mathfrak{sp}(4,\mathbb{R})\rightarrow\mathfrak{g}_1$. The composition $\rho\circ\iota$ is then a tight and nonholomorphic homomorphism. Since $\mathfrak{sp}(4,\mathbb{R})$ is of tube type we know by a structure theorem in \cite{B8} that the image of $\rho\circ\iota$ is contained in a tightly and holomorphically embedded subalgebra of tube type $\mathfrak{g}_2^T\subset\mathfrak{g}_2$. Restricting the codomain (and slightly abusing the notation) we have a 
tight and nonholomorphic homomorphism $\rho\circ\iota\colon\mathfrak{sp}(4,\mathbb{R})\rightarrow\mathfrak{g}_2^T$. By the classification of holomorphic tight maps we know that there is a tight and holomorphic homomorphism $\iota'\colon\mathfrak{g}_2^T\rightarrow\mathfrak{su}(n,n)$ for some $n$. The composition $\iota'\circ\rho\circ\iota$ is thus a tight and nonholomorphic homomorphism from $\mathfrak{sp}(4,\mathbb{R})$ into $\mathfrak{su}(n,n)$. But this contradicts our results from section 5, hence $\rho$ can not be tight and nonholomorphic.

If $\mathfrak{g}_1$ is of odd rank greater than one we can argue in a similar fashion with $\mathfrak{sp}(4,\mathbb{R})$ replaced by $\mathfrak{sp}(4,\mathbb{R})\oplus\mathfrak{su}(1,1)$.

Finally if $\mathfrak{g}_1$ is of rank one we can embed $\mathfrak{su}(2,1)$ into $\mathfrak{g}_1$ with a tight and holomorphic homomorphism $\iota$. If $\mathfrak{g}_2$ is of tube type we can compose with a holomorphic tight embedding $\iota'\colon\mathfrak{g}_2\rightarrow\mathfrak{su}(n,n)$ and argue as before. If $\mathfrak{g}_2$ is not of tube type we can not use the structure theorem since $\mathfrak{su}(2,1)$ is not of tube type. We treat this case separately in section 7.

\section{Preliminaries}\label{prel}

We start by setting the notation that will be used throughout the paper.
We denote by $\mathcal{X}$ Hermitian symmetric spaces of noncompact type, by $G$ the identity component of the isometry group of $\mathcal{X}$, by $K$ the stabilizer of a chosen basepoint $0$ and by $\mathfrak{g}$ the Lie algebra of $G$ with Cartan decomposition $\mathfrak{g}=\mathfrak{k}+\mathfrak{p}$. We identify $T_0\mathcal{X}\simeq\mathfrak{p}$ and we use the letters $X,Y$ to denote either tangent vectors or elements of $\mathfrak{g}$. Further we use brackets $\langle\cdot,\cdot\rangle$ to denote the invariant Riemannian metric on $\mathcal{X}$, normalized such that the holomorphic sectional curvature is $-1$, as well as the Killing form of $\mathfrak{g}$, it should be clear from the context which is meant. The invariant complex structure on $\mathcal{X}$ is denoted by $J$ and the element in the center of $\mathfrak{k}$ inducing the complex structure on $\mathfrak{p}$ by $Z$. Finally, we denote by $\omega$ the associated K\"ahler form defined by $\omega(X,Y)=\langle JX,Y\rangle$.
By the indexation it should be clear which spaces, groups etc. belong together.
We say that a Lie group is \emph{Hermitian} if it is the identity component of an isometry group of a Hermitian symmetric space of noncompact type or a finite covering group of such. We will use the term \emph{nonholomorphic} to mean neither holomorphic nor antiholomorphic.

With some notation in place we proceed to define tight maps. A map $\rho\colon\mathcal{X}_1\rightarrow\mathcal{X}_2$ is said to be \emph{totally geodesic} if the image of every geodesic in $\mathcal{X}_1$ is a geodesic in $\mathcal{X}_2$, possibly not parametrized by arclength. A totally geodesic map $\rho\colon \mathcal{X}_ 1\rightarrow \mathcal{X}_2$ satisfies 
\begin{equation*}
\mbox{sup}_{\Delta\in\mathcal{X}_1}\int_\Delta{\rho^*\omega_2}\leq \mbox{sup}_{\Delta\in\mathcal{X}_2}\int_\Delta{\omega_2}
\end{equation*}
 where the supremum is taken over triangles with geodesic sides. We say that the map is \emph{tight} if equality holds.

Tight maps are studied from three perspectives. Each totally geodesic map has a corresponding Lie algebra homomorphism and a corresponding Lie group homomorphism. We denote all three of these by the same letter. It should be clear from the context which is meant. To avoid overuse of phrases like "... homomorphism corresponding to a tight holomorphic map..." we will frequently attribute properties of Lie group homomorphisms and totally geodesic maps to the corresponding Lie algebra homomorphism.

We say that two totally geodesic maps $\rho,\eta \colon \mathcal{X}_1\rightarrow \mathcal{X}_2$ 
are equivalent if there is a $g\in G_2$ such that $\rho=g\circ\eta$. Since equivalent maps only differ by a holomorphic isometry we see immediately that the notion of tightness is well defined on equivalence classes of maps. We also have corresponding notions of equivalence for Lie group and Lie algebra homomorphisms. We say that two homomorphisms $\rho,\eta\colon\mathfrak{g}_1\rightarrow\mathfrak{g}_2$ are equivalent, or sometimes \emph{equivalent as homomorphisms}, if $\rho(\cdot)=\mbox{Ad}(g)\eta(\cdot)$ for some $g\in G_2$. Two Lie group homomorphisms $\rho,\eta\colon G_1\rightarrow G_2$ are equivalent if $\rho(\cdot)=\mbox{Ad}(g)\eta(\cdot)$ for some $g\in G_2$.

Finally there is another notion of equivalence that will be used which does \emph{not} agree with the others. We will say that two homomorphisms $\rho,\eta\colon\mathfrak{g}\rightarrow\mathfrak{su}(p,q)$ are \emph{equivalent as representations} if there is a $g\in GL(p+q,\mathbb{C})$ such that $\rho(\cdot)=\mbox{Ad}(g)\eta(\cdot)$.

\section{Continuous bounded cohomology}
In this section we recall some of the theory of continuous bounded cohomology. We will use this to answer questions concerning when the composition of two maps is tight. For a thorough review of the theory see \cite{B10}.

Let $G$ be a locally compact second countable group and define $C^k(G, \mathbb{R}):=\{f\colon G^{k+1}\rightarrow \mathbb{R}, f\mbox{ is continuous and bounded}\}$. $C^k(G,\mathbb{R})$ is naturally equipped with the supremum norm. We define a $G$-action on $C^k(G,\mathbb{R})$ as follows $(g\cdot f)(g_0,...,g_k):=f(g^{-1}g_0,...,g^{-1}g_k)$. Denote by $C^k(G,\mathbb{R})^G$ the $G$-invariant elements of $C^k(G,\mathbb{R})$. These form a complex 
$$0\rightarrow C_{cb}^0(G,\mathbb{R})^G\rightarrow^{d_0} C_{cb}^1(G,\mathbb{R})^G\rightarrow^{d_1} C_{cb}^2(G,\mathbb{R})^G\rightarrow^{d_2} ...$$
where $d_{k-1}f(g_0,...,g_k):=\sum_{j=0}^k{(-1)^jf(g_0,...,\hat{g_j},...,g_k)}$. We define 
$$H^k_{cb}(G,\mathbb{R})=\mbox{Ker}(d_k)/\mbox{Im}(d_{k-1}).$$
The norm on $C_{cb}^k(G,\mathbb{R})$ induces a seminorm on $H^k_{cb}(G,\mathbb{R})$ by $$||[f]||:=\mbox{inf}_{h\in[f]}||h||.$$
For locally compact groups this is a norm in degree two \cite{B11}. A homomorphism $\rho\colon G\rightarrow H$ between groups induces a pullback map between the cohomology groups $\rho^*\colon H^k_{cb}(H,\mathbb{R})\rightarrow H^k_{cb}(G,\mathbb{R})$, $[f]\mapsto[f\circ \rho]$. From the definition we see that this must always be norm decreasing.
 
We will from here on restrict our attention to cohomology in degree two and to Hermitian Lie groups.
Let $G$ be a Hermitian Lie group and $\mathcal{X}$ the associated symmetric space. Then $$c_\omega(g_0,g_1,g_2):=\int_{\Delta(g_0 x_0,g_1 x_0,g_2 x_0)}{\omega}$$ defines a cocycle, where $\Delta(g_0 x_0,g_1 x_0,g_2 x_0)$ denotes a triangle\footnote{How we fill $\Delta(g_0 x_0,g_1 x_0,g_2 x_0)$ is not important as $\omega$ is exact.} with geodesic sides with vertices $g_i x_0$ for some point $x_0\in \mathcal{X}$. This cocycle is continuous and bounded.
The corresponding cohomology class is called the \emph{K\"ahler class} and will be denoted by $\kappa_G$\footnote{The more common notation in the literature is $\kappa_G^b$.}. It is implicit in the definition that $\kappa_G$ depends on the complex structure $J$. We will sometimes use the notation $\kappa_{G,J}$ or write $(G,J)$ for a group with a certain complex structure associated to it when the dependence is crucial.
From the definition of this class we see immediatly that $\kappa_{G,-J}=-\kappa_{G,J}$.

The norms $||\kappa_{G}||$ were computed for the classical case in \cite{B4} and equals $r_{G}\pi$, where $r_{G}$ is the real rank of the group $G$. Another approach using the Maslov index in \cite{B14} covered the exceptional cases.
\begin{defi}
Let $\rho\colon G_1\rightarrow G_2$ be a homomorphism between Hermitian Lie groups. We say that $\rho$ is tight if $||\rho^* \kappa_{G_2}||=||\kappa_{G_2}||$.
\end{defi}

The following theorem from \cite[Proposition 6]{B8} allows us to translate results concerning tight homomorphisms to tight maps.
\begin{thm}
The homomorphism $\rho\colon G_1\rightarrow G_2$ is tight if and only if the corresponding  totally geodesic map $\rho\colon\mathcal{X}_1\rightarrow \mathcal{X}_2$ is tight. 
\end{thm}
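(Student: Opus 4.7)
The plan is to match the two tightness conditions by identifying them both with a norm equality for a single naturally defined $2$-cocycle on $\tilde G_1$, namely the pulled-back K\"ahler cocycle. First I would observe that for the totally geodesic map $\rho\colon\mathcal{X}_1\to\mathcal{X}_2$ and the corresponding equivariant homomorphism $\rho_{\tilde G}\colon\tilde G_1\to G_2$, the triangle $\Delta(\rho(g_0)y_0,\rho(g_1)y_0,\rho(g_2)y_0)\subset\mathcal{X}_2$ (with $y_0=\rho(x_0)$) is precisely the image under $\rho$ of the triangle $\Delta(g_0 x_0,g_1 x_0,g_2 x_0)\subset\mathcal{X}_1$, since $\rho$ carries geodesics to geodesics and is $\tilde G_1$-equivariant. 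The change of variables formula then yields
\begin{equation*}
(\rho^* c_{\omega_2})(g_0,g_1,g_2) \;=\; \int_{\Delta(g_0 x_0,g_1 x_0,g_2 x_0)} \rho^*\omega_2,
\end{equation*}
so $\rho^*\kappa_{G_2}$ is represented on $\tilde G_1$ by the triangle-integral cocycle of the $\tilde G_1$-invariant closed $2$-form $\rho^*\omega_2$ on $\mathcal{X}_1$.

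Next, because $\tilde G_1$ acts transitively on $\mathcal{X}_1$, every geodesic triangle in $\mathcal{X}_1$ arises with vertices of the form $g_i x_0$, and therefore
\begin{equation*}
\|\rho^* c_{\omega_2}\|_\infty \;=\; \sup_{\Delta\subset\mathcal{X}_1}\int_{\Delta}\rho^*\omega_2,
\end{equation*}
where the supremum is taken after fixing orientations to make the integrand nonnegative. Applied to $\rho=\mathrm{id}$ one gets $\|c_{\omega_2}\|_\infty=\sup_\Delta\int_\Delta\omega_2$, and by the norm computations recalled from \cite{A4} and \cite{A14} this common value is $r_{G_2}\pi=\|\kappa_{G_2}\|$. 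In particular the cocycle $c_{\omega_2}$ already realizes the seminorm of its class, while in general one only has the inequality $\|\rho^*\kappa_{G_2}\|\leq\|\rho^* c_{\omega_2}\|_\infty$.

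The main obstacle is the reverse inequality, namely that the pulled-back K\"ahler cocycle also realizes the seminorm of $\rho^*\kappa_{G_2}$. I would obtain this from the general principle, central to the bounded-cohomology theory of Hermitian Lie groups, that $H^2_{cb}(\tilde G_1,\mathbb{R})$ is computed isometrically by the complex of bounded $\tilde G_1$-invariant alternating Borel cocycles on the symmetric space $\mathcal{X}_1$: since $\rho^* c_{\omega_2}$ is exactly such a representative of $\rho^*\kappa_{G_2}$, its sup-norm equals $\|\rho^*\kappa_{G_2}\|$. Once this equality is established, the theorem follows at once, for the map is tight precisely when $\sup_\Delta\int_\Delta\rho^*\omega_2=\sup_\Delta\int_\Delta\omega_2$, which by the identifications above is equivalent to $\|\rho^*\kappa_{G_2}\|=\|\kappa_{G_2}\|$, i.e., tightness of the homomorphism.
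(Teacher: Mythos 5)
Note first that the paper does not prove this statement at all: it is quoted from \cite{A8}, so there is no internal proof to compare against; I can only assess your argument on its own terms against the route taken there. Your reductions are fine: the pulled-back cocycle $\rho^*c_{\omega_2}$ is indeed given by integrating $\rho^*\omega_2$ over geodesic triangles in $\mathcal{X}_1$ (the image of a geodesic cone under a totally geodesic, equivariant map is again a geodesic cone, and in any case closedness of $\omega_2$ makes the filling irrelevant), transitivity gives $\|\rho^*c_{\omega_2}\|_\infty=\sup_\Delta\int_\Delta\rho^*\omega_2$, and combining \cite{A4}, \cite{A14} gives $\|c_{\omega_2}\|_\infty=\sup_\Delta\int_\Delta\omega_2=\|\kappa_{G_2}\|$. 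In particular the direction ``homomorphism tight $\Rightarrow$ map tight'' already follows from the easy inequality $\|\rho^*\kappa_{G_2}\|\leq\|\rho^*c_{\omega_2}\|_\infty\leq\|\kappa_{G_2}\|$.

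The genuine gap is exactly at the step you flagged as the main obstacle, and the justification you give for it is a non sequitur. That $H^2_{cb}(\tilde G_1)$ is computed \emph{isometrically} by a complex of invariant cocycles on $\mathcal{X}_1$ means that the norm of a class is the \emph{infimum} of sup-norms over representatives in that complex; it does not mean that any particular representative, such as $\rho^*c_{\omega_2}$, attains this infimum. On $\mathcal{X}_1$ (unlike on the Furstenberg or Shilov boundary) there are plenty of nonzero bounded invariant $1$-cochains, e.g.\ bounded functions of the distance between two points, so one cannot dismiss out of hand the possibility of lowering the norm by a coboundary; some argument for norm-minimality of this specific cocycle is required. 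The standard repair, and the one underlying \cite{A8}, is to move to the Shilov boundary: by double ergodicity of the $G$-action on pairs of boundary points, invariant alternating $L^\infty$ $1$-cochains vanish there, so each class has a \emph{unique} bounded alternating representative (the Bergmann cocycle) whose essential sup-norm is the class norm; writing $\rho^*\omega_2=\sum_i\lambda_i\omega_{1,i}$ on the irreducible factors one gets $\|\rho^*\kappa_{G_2}\|=\sum_i|\lambda_i|\,r_{G_{1,i}}\pi=\sup_\Delta\int_\Delta\rho^*\omega_2$, which is the missing lower bound. Alternatively, you could try to save your argument on $\mathcal{X}_1$ itself by showing that invariant \emph{alternating} $1$-cochains on $\mathcal{X}_1^2$ vanish (invariant functions of a pair depend only on the vector-valued distance, which is symmetric in its arguments because the opposition involution is trivial for the restricted root systems $C_r$, $BC_r$ of Hermitian groups), so that $\rho^*c_{\omega_2}$ is the unique alternating representative of its class; but this is an additional, nontrivial argument that must be supplied — as written, the pivotal equality $\|\rho^*c_{\omega_2}\|_\infty=\|\rho^*\kappa_{G_2}\|$ is unproved.
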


Let $G=G_1\times...\times G_n$ be a decomposition of $G$ into simple factors and $\mathcal{X}=\mathcal{X}_1\times... \times\mathcal{X}_n$ be the corresponding decomposition of the symmetric space into irreducible symmetric spaces.
The complex structure $J$ on $\mathcal{X}$ determines a complex structure $J_i$ on each $\mathcal{X}_i$. Recall that for an irreducible Hermitian symmetric space there are two possible choices of complex structure. We have, \cite{B8}, 

\begin{equation*}
H_{cb}^2(G)\cong \prod{H^2_{cb}(G_i)}\cong \prod{\mathbb{R}\kappa_{G_i}}
\end{equation*}
and with a slight abuse of notation we write  
$\kappa_{G,J}=\sum_i{\kappa_{G_i,J_i}}$.
The complex structure $J$ for $\mathcal{X}$ thus defines an orientation for each $H^2_{cb}(G_i)$. 

\begin{defi}
We say that a class $\alpha \in H^2_{cb}(G)$ is
\begin{enumerate}
       \item  positive if $\alpha=\sum{\mu_i \kappa_{G_i,J_i}}$ where $\mu_i\geq 0$ for all $i=1,...,n$, and
       \item  strictly positive if $\alpha=\sum{\mu_i \kappa_{G_i,J_i}}$ where $\mu_i> 0$ for all $i=1,...,n$.
   \end{enumerate}
\end{defi}
\begin{defi}\label{poshomo}
We say that a homomorphism $\rho\colon G_1\rightarrow G_2$ is (strictly) positive if $\rho^* \kappa_{G_2}$ is (strictly) positive.
\end{defi}
In analogous way we also define (strictly) negative classes and homomorphisms.
\begin{lma}\label{lma55}
If $\rho\colon G_1\rightarrow G_2$ corresponds to a holomorphic map then it is positive. If it corresponds to a holomorphic and injective map then it is strictly positive.
\end{lma}
\begin{proof}
We first consider the case where both $G_1$ and $G_2$ are simple.
Consider the corresponding totally geodesic map $\rho\colon\mathcal{X}_1\rightarrow\mathcal{X}_2$. If $\rho$ is not injective it is a constant map, we then get $\rho^*\kappa_{G_2}=0$ which shows that $\rho$ is positive but not strictly positive. If $\rho$ is injective the restriction of the Riemannian metric satisfies $\langle\cdot,\cdot\rangle_2|_{\rho(\mathcal{X}_1)}=c_\rho\langle\cdot,\cdot\rangle_1$ for some $c_\rho>0$.
We get
\begin{eqnarray*}
\rho^{*}\omega_2(X,Y)=\langle\rho_{*} X, J_2\rho_{*} Y\rangle_2=\langle\rho_{*} X, \rho_{*}J_1 Y\rangle_2=c_{\rho}\langle X, J_1 Y\rangle_1=c_{\rho}\omega_1 (X,Y)
\end{eqnarray*}
Since $\rho^*\omega_2=c_\rho\omega_1$ we get $\rho^*\kappa_{G_2}=c_\rho\kappa_{G_1}$ due to the naturality of the Dupont isomorphism, i.e. $\rho$ is strictly positive.

We now consider the general case. Let
$\mathcal{X}_1=\mathcal{X}_{1,1}\times ... \times\mathcal{X}_{1,n}$ and 
$\mathcal{X}_2=\mathcal{X}_{2,1}\times ... \times\mathcal{X}_{2,m}$
be decompositions into irreducible symmetric spaces. Let $\iota_i\colon\mathcal{X}_{1,i}\rightarrow\mathcal{X}_1$ be the inclusion map and $\pi_j\colon\mathcal{X}_2\rightarrow\mathcal{X}_{2,j}$ projection onto the $j$:th factor. Define the maps
$\rho_{\cdot,i}=\rho\circ\iota_i$, $\rho_{j,\cdot}=\pi_j\circ\rho$ and $\rho_{j,i}=\pi_j\circ\rho\circ\iota_i$.
That $\rho$ is injective is equivalent to that $\rho_{\cdot,i}$ is injective for all $i=1,...,n$. In turn, $\rho_{\cdot,i}$ is injective if and only if there exists a $j$ such that $\rho_{j,i}$ is injective.
That $\rho$ is holomorphic is equivalent to that $\rho_{j,i}$ is holomorphic for all $i,j$.
With a slight abuse of notation we get 
\begin{eqnarray*}
\rho^*\kappa_{G_2}=\sum_j{\rho_{j,\cdot}^*\kappa_{G_{2,j}}}=\sum_{j,i}{\rho_{j,i}^*\kappa_{G_{2,j}}}=\sum_{j,i}{c_{\rho_{j,i}}\kappa_{G_{1,i}}}=\sum_i{ (\sum_j{c_{\rho_{j,i}}})\kappa_{G_{1,i}}}.
\end{eqnarray*}
Since all $\rho_{j,i}$ are holomorphic we get that all $c_{\rho_{j,i}}$ are greater than or equal to zero, i.e. $\rho$ is positive. If we further assume that $\rho$ is injective we have that at least one $\rho_{j,i}$ is injective for every $i$. Hence at least one $c_{\rho_{j,i}}$ is strictly positive in the sum $\sum_j{c_{\rho_{j,i}}}$ for every $i$, i.e. $\rho$ is strictly positive. 
 \end{proof}

We will now investigate when compositions of homomorphisms and hence of maps are tight. We start with two lemmas from \cite[Lemma 4.9 and 4.10]{B8}.

\begin{lma}\label{factor1}
Let $\rho\colon G_1\rightarrow G_2$ and $\eta\colon G_2\rightarrow G_3$ be homomorphisms. Assume $\rho$ is tight. If $\eta$ is tight and positive or tight and negative then $\eta\circ \rho$ is tight.
\end{lma}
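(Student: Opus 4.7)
The plan is to reduce tightness of $h \circ f$ to a direct computation of norms on bounded cohomology. Since pullback is always norm-decreasing, $\|(h \circ f)^*\kappa_{G_3}\| \leq \|\kappa_{G_3}\|$ is automatic, and only the reverse inequality requires work.

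Decompose $G_2 = \prod_i G_{2,i}$ and $G_1 = \prod_j G_{1,j}$ into simple factors. By positivity of $h$, write $h^* \kappa_{G_3} = \sum_i \mu_i \kappa_{G_{2,i}}$ with $\mu_i \geq 0$, and expand $f^* \kappa_{G_{2,i}} = \sum_j \lambda_{ij} \kappa_{G_{1,j}}$ for real scalars $\lambda_{ij}$. The central tool is the norm identity
\[
\Bigl\| \sum_j \alpha_j \kappa_{G_{1,j}} \Bigr\| \;=\; \sum_j |\alpha_j|\, r_{G_{1,j}}\, \pi,
\]
valid for any reals $\alpha_j$. The upper bound here comes from the triangle inequality combined with $\|\kappa_{G_{1,j}}\|=r_{G_{1,j}}\pi$, cited from \cite{A4} and \cite{A14}. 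The lower bound follows from choosing, in each factor $\mathcal{X}_{1,j}$, a geodesic triangle whose $\omega_{1,j}$-area approximates $\mathrm{sign}(\alpha_j)\,r_{G_{1,j}}\pi$ and then assembling these factor triangles into a single geodesic triangle in the product $\mathcal{X}_1$, which is possible because geodesics in a Riemannian product split as products of geodesics in the factors.

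Tightness of $h$ expands, via this identity, to $\sum_i \mu_i r_{G_{2,i}}\pi = r_{G_3}\pi$. Tightness of $f$, combined with the triangle inequality $\|\sum_i f^*\kappa_{G_{2,i}}\|\leq \sum_i \|f^*\kappa_{G_{2,i}}\|$ and the factor-by-factor bound $\|f^*\kappa_{G_{2,i}}\|\leq r_{G_{2,i}}\pi$, forces equality throughout the chain
\[
r_{G_2}\pi \;=\; \|f^*\kappa_{G_2}\| \;\leq\; \sum_i \|f^*\kappa_{G_{2,i}}\| \;\leq\; \sum_i r_{G_{2,i}}\pi \;=\; r_{G_2}\pi.
\]
Translating back into the $\lambda_{ij}$ via the norm identity, equality in $\|\sum_i f^*\kappa_{G_{2,i}}\|=\sum_i \|f^*\kappa_{G_{2,i}}\|$ is precisely the assertion that for each fixed $j$ the signs of $\{\lambda_{ij}\}_i$ agree, while equality in the factor-by-factor bound gives $\|f^*\kappa_{G_{2,i}}\|=r_{G_{2,i}}\pi$ for every $i$.

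Finally I compute
\[
\|(h \circ f)^* \kappa_{G_3}\| \;=\; \Bigl\| \sum_j \Bigl( \sum_i \mu_i \lambda_{ij}\Bigr) \kappa_{G_{1,j}} \Bigr\| \;=\; \sum_j \Bigl| \sum_i \mu_i \lambda_{ij}\Bigr| r_{G_{1,j}}\pi.
\]
The sign-consistency of the $\lambda_{ij}$ in $i$, combined with $\mu_i \geq 0$, allows me to pull the absolute value inside the sum over $i$, so the right-hand side equals $\sum_i \mu_i \sum_j |\lambda_{ij}| r_{G_{1,j}}\pi = \sum_i \mu_i \|f^*\kappa_{G_{2,i}}\| = \sum_i \mu_i r_{G_{2,i}}\pi = r_{G_3}\pi$, giving tightness of $h\circ f$. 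The negative case is identical after replacing $\mu_i$ by $|\mu_i|=-\mu_i$. The only real obstacle is justifying the norm identity above; once this is in hand the rest is bookkeeping with the triangle inequality for absolute values.
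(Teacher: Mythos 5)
The paper itself does not prove this lemma; it is quoted from \cite{A8}, so there is no internal proof to compare against. Your argument is, however, exactly the kind of bookkeeping the paper uses when it proves Lemma \ref{factor3}: decompose into simple factors, expand $h^*\kappa_{G_3}=\sum_i\mu_i\kappa_{G_{2,i}}$ and $f^*\kappa_{G_{2,i}}=\sum_j\lambda_{ij}\kappa_{G_{1,j}}$, and reduce everything to the additivity of the Gromov norm over simple factors, $\|\sum_j\alpha_j\kappa_{G_{1,j}}\|=\sum_j|\alpha_j|\,r_{G_{1,j}}\pi$. Granting that identity, your chain of (in)equalities is correct: tightness of $f$ forces sign-consistency of $\lambda_{ij}$ in $i$ for each fixed $j$ and $\|f^*\kappa_{G_{2,i}}\|=r_{G_{2,i}}\pi$ for every $i$; positivity of the $\mu_i$ then lets you pull the absolute value through, and tightness of $h$ converts $\sum_i\mu_i r_{G_{2,i}}\pi$ into $r_{G_3}\pi$. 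The negative case is handled correctly as well.

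The one genuine gap is your justification of the lower bound in the norm identity. Exhibiting geodesic triangles in the product whose $\omega$-areas approximate $\mathrm{sign}(\alpha_j)\,r_{G_{1,j}}\pi$ in each factor only bounds from below the sup norm of the particular cocycle $c_\omega$ representing the class; the seminorm on $H^2_{cb}$ is an \emph{infimum} over all bounded representatives, so a large value of one representative proves nothing about the class norm. The equality $\|\sum_j\alpha_j\kappa_{G_{1,j}}\|=\sum_j|\alpha_j|\,r_{G_{1,j}}\pi$ requires either the computations of \cite{A4}, \cite{A14} together with the additivity statement established in \cite{A8} (which this paper invokes without comment in the proof of Lemma \ref{factor3}), or a duality/boundary argument; it cannot be obtained by the triangle-exhibiting heuristic you sketch. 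If you simply cite that identity instead of ``proving'' it, the rest of your argument is complete and correct.
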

\begin{lma}\label{positive}
Let $\rho\colon G_1\rightarrow G_2$ be a tight homomorphism. Then there exists a complex structure for $\mathcal{X}_1$ such that $\rho$ is tight and positive.
\end{lma}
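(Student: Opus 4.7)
The plan is to exploit the product decomposition of $G_1$ together with the freedom, built into the setup of Section \ref{prel}, to reverse the complex structure on each irreducible factor of $\mathcal{X}_1$ independently. Since tightness of $\rho$ is defined via the seminorm on $H_{cb}^2(G_1,\mathbb{R})$, which is constructed purely from the abstract group $G_1$ and refers to no complex structure on $\mathcal{X}_1$, the only real task is to exhibit a choice of $J$ under which $\rho^*\kappa_{G_2}$ becomes positive; tightness will then come essentially for free from the hypothesis.

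First I would decompose $G_1=G_1^1\times\cdots\times G_1^n$ into simple factors and fix an arbitrary initial complex structure $J^{(0)}$ on $\mathcal{X}_1$, which determines complex structures $J^{(0)}_i$ on each irreducible factor. Using the identification $H_{cb}^2(G_1)\cong\prod_i \mathbb{R}\kappa_{G_1^i, J^{(0)}_i}$ recalled in the excerpt, expand
\[
\rho^*\kappa_{G_2}=\sum_{i=1}^n \mu_i\,\kappa_{G_1^i, J^{(0)}_i}
\]
with unique real coefficients $\mu_i$. Next I would flip the complex structure on each factor where $\mu_i<0$: set $\varepsilon_i=+1$ if $\mu_i\geq 0$ and $\varepsilon_i=-1$ otherwise, let $J_i=\varepsilon_i J^{(0)}_i$, and let $J$ be the product complex structure they determine on $\mathcal{X}_1$. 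By the identity $\kappa_{G,-J}=-\kappa_{G,J}$ already recorded, the same class now reads
\[
\rho^*\kappa_{G_2}=\sum_{i=1}^n |\mu_i|\,\kappa_{G_1^i, J_i},
\]
which is positive with respect to $J$ by definition.

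Finally, to pin down tightness for this new choice, I would point out that the class $\rho^*\kappa_{G_2}\in H_{cb}^2(G_1,\mathbb{R})$ is literally the same element regardless of which complex structure we put on $\mathcal{X}_1$, so its seminorm is the same. The tightness equality $\|\rho^*\kappa_{G_2}\|=\|\kappa_{G_2}\|$ assumed for $\rho$ therefore persists with respect to $J$.

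The argument is essentially a bookkeeping exercise, so I do not expect a genuine obstacle. The one point I would want to make explicit, as a sanity check, is that replacing $J^{(0)}_i$ by $-J^{(0)}_i$ really flips the sign of the generator $\kappa_{G_1^i, J^{(0)}_i}$ rather than producing some unrelated class; this is immediate from the cocycle formula $c_\omega(g_0,g_1,g_2)=\int_\Delta \omega$, since the K\"ahler form $\omega(X,Y)=\langle JX,Y\rangle$ is odd in $J$.
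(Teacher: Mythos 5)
Your proof is correct: since the seminorm on $H^2_{cb}(G_1,\mathbb{R})$ and the class $\rho^*\kappa_{G_2}$ are independent of the choice of invariant complex structure on $\mathcal{X}_1$, tightness is unaffected, and flipping $J^{(0)}_i$ on exactly those irreducible factors where $\mu_i<0$ (legitimate, since any sign combination gives a $G_1$-invariant complex structure on the product) makes the coefficients $|\mu_i|\geq 0$ via $\kappa_{G,-J}=-\kappa_{G,J}$. The paper itself gives no proof of Lemma \ref{positive} — it is quoted from \cite{A8} — but your sign-flipping bookkeeping is exactly the standard argument used there, so there is nothing substantive to compare.
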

The following lemma will be very useful.
\begin{lma}\label{factor12}
Let $\rho\colon G_1\rightarrow G_2$ and $\eta\colon G_2\rightarrow G_3$ be homomorphisms. Assume $G_2$ is simple. Then $\eta\circ \rho$ is tight if and only if both $\rho$ and $\eta$ are tight.
\end{lma}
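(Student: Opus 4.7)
The plan is to exploit that simplicity of $G_2$ forces $H^2_{cb}(G_2)\cong \mathbb{R}\kappa_{G_2}$ to be one-dimensional, so that the pullback $h^*\kappa_{G_3}$ must be a scalar multiple $\lambda\kappa_{G_2}$. Combined with the functoriality $(h\circ f)^*=f^*\circ h^*$ this gives $(h\circ f)^*\kappa_{G_3}=\lambda\,f^*\kappa_{G_2}$, and the whole argument reduces to tracking this single scalar against the two norm-decreasing pullback inequalities.

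More precisely, I would first write $h^*\kappa_{G_3}=\lambda\kappa_{G_2}$ and note that tightness of $h$ is equivalent to $|\lambda|\cdot\|\kappa_{G_2}\|=\|\kappa_{G_3}\|$. Next, computing $\|(h\circ f)^*\kappa_{G_3}\|=|\lambda|\cdot\|f^*\kappa_{G_2}\|$ and chaining together the two norm-decreasing pullback inequalities yields
$$
|\lambda|\cdot\|f^*\kappa_{G_2}\|\;\leq\;|\lambda|\cdot\|\kappa_{G_2}\|\;\leq\;\|\kappa_{G_3}\|.
$$
Tightness of $h\circ f$ is precisely equality of the outer terms, which forces both intermediate inequalities to become equalities; the left equality states that $f$ is tight, and the right equality states that $h$ is tight. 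The reverse implication is immediate by substituting the two equalities back into the same chain.

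I do not anticipate a serious obstacle. The only delicate point is that one divides by $\|\kappa_{G_2}\|$ and by $|\lambda|$ at certain moments: the former is legitimate because the seminorm on $H^2_{cb}$ is actually a norm for Hermitian Lie groups (indeed $\|\kappa_{G_2}\|=r_{G_2}\pi>0$), and the latter because $\|\kappa_{G_3}\|>0$ forces $\lambda\neq 0$ as soon as $h\circ f$ is assumed tight. If the simplicity hypothesis on $G_2$ were dropped, $h^*\kappa_{G_3}$ could be a nontrivial combination of the K\"ahler classes of several simple factors of $G_2$, and the scalar-collapse step would break down — consistent with Lemma \ref{factor1} producing only a one-sided implication in that greater generality.
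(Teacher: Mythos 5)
Your proof is correct. The forward direction is essentially the paper's argument: both exploit $H^2_{cb}(G_2)\cong\mathbb{R}\kappa_{G_2}$ to write $h^*\kappa_{G_3}$ as a scalar multiple of $\kappa_{G_2}$ and then chase the two norm-decreasing pullback inequalities, the only cosmetic difference being that the paper pins the scalar down as $\pm r_{G_3}/r_{G_2}$ while you keep an abstract $\lambda$ and verify $\lambda\neq 0$. Where you genuinely diverge is the converse. The paper does not compute anything there: it observes that a simple $G_2$ admits only two invariant complex structures, so a tight $h$ is automatically tight and positive or tight and negative, and then it invokes Lemma \ref{factor1}. You instead substitute the equalities $\|f^*\kappa_{G_2}\|=\|\kappa_{G_2}\|$ and $|\lambda|\,\|\kappa_{G_2}\|=\|h^*\kappa_{G_3}\|=\|\kappa_{G_3}\|$ into $\|(h\circ f)^*\kappa_{G_3}\|=|\lambda|\,\|f^*\kappa_{G_2}\|$, obtaining tightness of $h\circ f$ by a direct scalar computation. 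This makes the lemma self-contained, bypassing the positivity machinery of Lemma \ref{factor1}, and it is valid precisely because simplicity collapses $h^*\kappa_{G_3}$ to a single scalar multiple so no cancellation between factors can occur; your closing remark correctly identifies that this is exactly the point where the argument would break for non-simple $G_2$, which is why the paper's Lemma \ref{factor1} carries the positivity hypothesis in that generality.
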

\begin{proof}
Assume $\eta\circ \rho$ is tight. We have
$$||\kappa_{G_3}||=||\rho^*\eta^*\kappa_{G_3}||\leq ||\eta^*\kappa_{G_3}||\leq||\kappa_{G_3}||.$$
Hence $||\eta^*\kappa_{G_3}||=||\kappa_{G_3}||$ i.e. $\eta$ is tight. Since $G_2$ is simple we have $H^2_{cb}(G_2)=\mathbb{R}\kappa_{G_2}$ and hence $\eta^*\kappa_{G_3}=\pm\frac{r_{G_3}}{r_{G_2}}\kappa_{G_2}$.
We have 
$$r_{G_3}\pi=||\kappa_{G_3}||=||\rho^*\eta^*\kappa_{G_3}||=||\pm \rho^*\frac{r_{G_3}}{r_{G_2}}\kappa_{G_2}||=\frac{r_{G_3}}{r_{G_2}}||\rho^*\kappa_{G_2}||$$
Hence $||\rho^*\kappa_{G_2}||=r_{G_2}\pi=||\kappa_{ G_2}||$, i.e. $\rho$ is tight.

Assume that $\rho$ and $\eta$ are tight. Since $G_2$ is simple there are only two associated complex strutures to it. Hence $\eta$ is tight and positive or negative. Lemma \ref{factor1} then implies that $\eta\circ \rho$ is tight.
 \end{proof}

What we want next is some kind of contrapositive of Lemma \ref{factor1}, that $\eta\circ \rho$ not tight implies $\eta$ not tight. 
To achieve this we will have to vary the complex structure for the symmetric space associated to the middle group. Let $\mathcal{J}_2$ denote the set of complex structures of $\mathcal{X}_2$ and $\mathcal{J}_2'$ a minimal subset of $\mathcal{J}_2$ fulfilling $\mathcal{J}_2'\cup -\mathcal{J}_2'=\mathcal{J}_2$. 
\begin{lma}\label{factor11}
Let $\eta\colon G_2\rightarrow G_3$ be a fixed homomorphism. Further, for every $J\in\mathcal{J}_2'$ let $\rho^J\colon G_1\rightarrow (G_2,J)$ be a homomorphism that is tight with respect to the complex structure $J$. If $\eta\circ \rho^J$ is nontight for all $J\in\mathcal{J}_2'$ then $\eta$ is nontight. 
\end{lma}
\begin{proof}
We prove the contrapositive of the statement. If $\eta$ is tight, there is by Lemma \ref{positive} a complex structure $J$ on $\mathcal{X}_2$ such that $\eta$ is tight and positive. Either $J$ or $-J$ is in $\mathcal{J}_2'$. In the first case $\eta$ is tight and positive hence $\eta\circ \rho^J$ is tight by Lemma \ref{factor1}. In the second case $\eta$ is tight and negative and again $\eta\circ \rho^{-J}$ is tight by Lemma \ref{factor1}.
 \end{proof}

We have the following lemma which is an easy generalization of \cite[Lemma 3.1]{B5}. 
\begin{lma}\label{factor2}
Let $\rho\colon G\rightarrow \prod_{i=1}^{n}{G_i}$ be a homomorphism corresponding to an injective totally geodesic map, where all the $G_j$ are simple. Let $\pi_j\colon\prod_{i=1}^{n}{G_i}\rightarrow G_j$ be the projection maps for $j=1,...,n$.
Then $\rho$ is tight if and only if $\pi_j\circ \rho$ is tight and positive for all $j$ or tight and negative for all $j$. 
\end{lma}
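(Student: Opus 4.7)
The plan is to reduce the statement to an explicit computation in $H^2_{cb}(G)$, using the decomposition $H^2_{cb}(G) = \bigoplus_k \mathbb{R}\kappa_{H_k}$, where $G = H_1 \times \cdots \times H_m$ is the decomposition of $G$ into simple Hermitian factors (with induced complex structures $J_k$). The key ingredient I would invoke is the rank-additive norm formula
\begin{equation*}
\Bigl\|\textstyle\sum_k \mu_k \kappa_{H_k}\Bigr\| = \pi \sum_k |\mu_k|\, r_{H_k},
\end{equation*}
which one obtains by representing any class by its K\"ahler cocycle and realizing the supremum over product geodesic triangles $\Delta_1 \times \cdots \times \Delta_m$ whose individual orientations are chosen to maximize each summand. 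Setting $\alpha_j := (p_j \circ f)^* \kappa_{G_j} = \sum_k c_{jk} \kappa_{H_k}$, the identity $\kappa_{\prod G_i} = \sum_j p_j^* \kappa_{G_j}$ gives $f^* \kappa_{\prod G_i} = \sum_j \alpha_j$.

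For the $(\Leftarrow)$ direction, I would observe that when every $p_j \circ f$ is tight and positive, one has $c_{jk} \geq 0$ and $\pi \sum_k c_{jk} r_{H_k} = \|\alpha_j\| = \pi r_{G_j}$; summing over $j$ yields $\|\sum_j \alpha_j\| = \pi \sum_j r_{G_j} = \pi r_{\prod G_i}$, so $f$ is tight. The all-negative case is identical after reversing signs. For $(\Rightarrow)$, tightness of $f$ forces the chain
\begin{equation*}
\pi r_{\prod G_i} = \bigl\|\textstyle\sum_j \alpha_j\bigr\| \leq \sum_j \|\alpha_j\| \leq \pi \sum_j r_{G_j} = \pi r_{\prod G_i}
\end{equation*}
to collapse to equalities, from which I read off both $\|\alpha_j\| = \pi r_{G_j}$ (so each $p_j \circ f$ is tight) and, unpacking the middle equality via the norm formula, $|\sum_j c_{jk}| = \sum_j |c_{jk}|$ for every $k$.

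The delicate point, and what I expect to be the main obstacle, is extracting a global sign from this per-$k$ alignment condition: a priori it only says that for each fixed $k$ the family $\{c_{jk}\}_j$ shares a common sign $\epsilon_k \in \{+1,-1\}$, which is weaker than the uniform positivity or negativity of the individual $\alpha_j$ demanded by the statement. I would resolve this by invoking the freedom granted by Lemma \ref{positive} and replacing $J_k$ by $\epsilon_k J_k$ simultaneously on every simple factor of $G$; relative to this new complex structure every $\alpha_j$ becomes positive, and the opposite choice makes every $\alpha_j$ negative, yielding the stated dichotomy and completing the proof.
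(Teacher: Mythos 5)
The paper gives no proof of Lemma \ref{factor2} to compare against: it is imported verbatim from \cite{A8}. Judged on its own terms, your argument is sound and runs on exactly the machinery this paper uses elsewhere anyway --- the additivity $\|\sum_k\mu_k\kappa_{H_k}\|=\pi\sum_k|\mu_k|r_{H_k}$ is precisely what is invoked without comment in the proof of Lemma \ref{factor3}, and your reduction (pullbacks are norm-decreasing, so tightness of $f$ collapses the chain of inequalities, giving tightness of every $p_j\circ f$ together with the per-factor sign alignment $|\sum_j c_{jk}|=\sum_j|c_{jk}|$ for each simple factor $H_k$ of $G$) is the natural and correct route. Your $(\Leftarrow)$ direction is complete as written.

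The one point to be clear about is the final step. Replacing $J_k$ by $\epsilon_k J_k$ changes what ``positive'' means, so what you have actually proved is: $f$ is tight if and only if every $p_j\circ f$ is tight and there is a choice of invariant complex structure in $\mathcal{J}$ with respect to which all $p_j\circ f$ are positive (equivalently, a choice making them all negative). This is not a defect of your argument but of the literal reading of the statement with the complex structure on $G$ held fixed: that version is false. Indeed, take $G=SU(1,1)\times SU(1,1)$ with its product structure, let $\iota\colon G\to SU(2,2)$ be the tight holomorphic polydisc embedding (pullback class $\kappa_1+\kappa_2$) and precompose the second factor with the antiholomorphic automorphism $\theta$; the resulting homomorphism pulls $\kappa_{SU(2,2)}$ back to $\kappa_1-\kappa_2$, still of norm $2\pi$, hence is tight, yet with $n=1$ it is neither positive nor negative. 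So your per-$k$ alignment is the sharp conclusion, and the lemma must be read with the complex structure allowed to vary, in the spirit of Lemma \ref{positive}; this is harmless for the paper, which only ever uses the direction ``some $p_j\circ f$ nontight $\Rightarrow f$ nontight'' (in Theorem \ref{nya} and the discussion of equivalence classes). A small bookkeeping remark: citing Lemma \ref{positive} here does no real work --- applied to each $p_j\circ f$ separately it would produce possibly different complex structures for different $j$ --- and the uniform choice is supplied by the signs $\epsilon_k$ you already extracted, so you can drop that appeal entirely.
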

Again we are more interested in when $\rho$ fails to be tight. For this it suffices that one single $\pi_j\circ f$ fails to be tight.
\begin{lma}\label{factor3}
Let $\rho\colon G\rightarrow H$ and $\eta\colon H\rightarrow L$ be homomorphisms of Hermitian Lie groups. Assume that $\eta$ is strictly positive. If $\rho$ is nontight then $\eta\circ \rho$ is nontight.
\end{lma}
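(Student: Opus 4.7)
The plan is to prove the contrapositive: assume $h\circ f$ is tight, and deduce that $f$ is tight.

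Decompose $H=H_1\times\cdots\times H_n$ into simple factors with projections $p_i\colon H\to H_i$. Strict positivity of $h$ gives $h^*\kappa_L=\sum_i\mu_i\kappa_{H_i}$ with every $\mu_i>0$, so $||h^*\kappa_L||=\sum_i\mu_i r_{H_i}\pi$. Pulling back through $f$ and combining the triangle inequality with the norm-decreasing property of pullbacks yields
\begin{equation*}
||(h\circ f)^*\kappa_L||\leq\sum_i\mu_i||(p_i\circ f)^*\kappa_{H_i}||\leq\sum_i\mu_i r_{H_i}\pi\leq||\kappa_L||.
\end{equation*}
Tightness of $h\circ f$ forces equality throughout, so $h$ is tight, each $p_i\circ f$ is tight, and the first triangle inequality is sharp.

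To exploit this sharpness, decompose $G=G_1\times\cdots\times G_m$ into simple factors and expand $(p_i\circ f)^*\kappa_{H_i}=\sum_k a_{ik}\kappa_{G_k}$; using the norm formula $||\sum_k c_k\kappa_{G_k}||=\sum_k|c_k|r_{G_k}\pi$, sharpness forces, for every $k$, the coefficients $\{a_{ik}\}_i$ to share a common sign $\epsilon_k\in\{\pm 1\}$. Replacing the complex structure $J_{G_k}$ by $\epsilon_k J_{G_k}$ flips the $k$-th coefficient across all $i$ simultaneously, turning every $p_i\circ f$ into a positive (still tight) homomorphism. Since tightness of $f$ is an intrinsic property independent of the complex structure chosen on $G$, applying Lemma~\ref{factor2} in this adjusted complex structure delivers the tightness of $f$.

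The main obstacle is bridging the gap between sharpness of the triangle inequality, which provides only per-factor sign consistency, and the uniform positivity or negativity across all simple factors of $G$ that Lemma~\ref{factor2} requires. The freedom to reorient the invariant complex structures on individual simple factors of $G$ is precisely what reconciles the two.
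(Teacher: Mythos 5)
Your proof is correct, and it is built on the same bookkeeping as the paper's --- decompose $H$ (and $G$) into simple factors, use the norm formula $||\sum_k c_k\kappa_{G_k}||=\sum_k|c_k|r_{G_k}\pi$, and exploit that all coefficients $\mu_i$ of $h^*\kappa_L$ are strictly positive --- but you run it in contrapositive form and close it differently. The paper argues directly: writing $(p_i\circ f)^*\kappa_{H_i}=\sum_k a_{ik}\kappa_{G_k}$, it asserts that nontightness of $f$ gives $\sum_k|a_{ik}|r_{G_k}<r_{H_i}$ for some $i$, and then the whole norm chain is strictly below $||\kappa_L||$. That assertion, read literally, misses the case where every row attains equality yet $f$ fails to be tight because the $a_{ik}$ have mixed signs over $i$ (cancellation across the factors of $H$); your argument covers precisely this case, since sharpness of the triangle inequality is what excludes such cancellation, so your version is if anything the more careful one. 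Two small remarks: the common sign $\epsilon_k$ should be understood after discarding zero coefficients (zeros are harmless for positivity), and the detour through Lemma \ref{factor2} and the re-orientation of the $J_{G_k}$ is avoidable --- once you know each $p_i\circ f$ is tight and that for each $k$ the $a_{ik}$ share a sign, you get $||f^*\kappa_H||=\sum_k|\sum_i a_{ik}|r_{G_k}\pi=\sum_i\sum_k|a_{ik}|r_{G_k}\pi=\sum_i r_{H_i}\pi=||\kappa_H||$, i.e.\ tightness of $f$ directly. What your detour buys is that the sign data is packaged into an existing lemma; what the direct computation buys is a shorter, self-contained argument that never needs to change complex structures on $G$.
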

\begin{proof}
Let $H=\prod_{i=1}^n{H_i}$ and $G=\prod_{j=1}^N{G_j}$ be decompositions of $H$ and $G$ into simple factors. Denote by $\kappa_L, \kappa_{H_i},\kappa_{G_j}$ the K\"ahler classes. We have that $\eta^*\kappa_L=\sum_{i=1}^n{\lambda_i \kappa_{H_i}}$ where $\sum_{i=1}^n{\lambda_i r_{H_i}}\leq r_{L}$ and all $\lambda_i>0$ since $\eta$ is strictly positive.
We have $\rho^*\eta^*\kappa_L=\sum_{i,j}{\lambda_i\mu_{ij}\kappa_{G_j}}$ where $\sum_{j}{|\mu_{ij}|r_{G_j}}\leq r_{H_i}$.
Now if $\rho$ is not tight, we have a strict inequality for some $i$. Thus we get
$||\rho^*\eta^*\kappa_L||=\sum_{i,j}{||\lambda_i\mu_{ij}\kappa_{G_j}||}=\sum_{i,j}{\lambda_i|\mu_{ij}|r_{G_j}\pi}<\sum_{i}{\lambda_i r_{H_i}\pi}\leq r_L\pi=||\kappa_L||$, 
i.e. $\rho\circ \eta$ is not tight.
 \end{proof}

\section{Representation theory}
In this section we will recall some facts from representation theory that will be needed in Section 5. As there are quite a lot of different notions we start with a subsection going through the basics while setting notation that will be used throughout the remainder of the paper.
\subsection{Notation}
Let $\mathfrak{g}$ be a complex semisimple Lie algebra of rank $n$ with root space decomposition $\mathfrak{g}=\mathfrak{h}+\sum_{\alpha\in A}{\mathfrak{g}_\alpha}$. Here $\mathfrak{h}$ is a \emph{Cartan subalgebra}, $A\subset \mathfrak{h}^*$ the set of \emph{roots} and $\mathfrak{g}_\alpha$ subspaces of $\mathfrak{g}$ such that $[H,X]=\alpha(H)X$ for all $H\in\mathfrak{h}$, $X\in\mathfrak{g}_\alpha$. There is a subset $\{\alpha_1,...,\alpha_n\}\subset A$ called \emph{simple roots} defined by the property that any $\alpha\in A$ can be written as $\alpha=\sum_{i=1}^n{a_i\alpha_i}$ with either all $a_i$ nonpositive or all $a_i$ nonnegative integers. We get a partial ordering on the set of roots by saying that $\alpha=\sum_{i=1}^n{a_i\alpha_i}\geq 0$ if the $a_i\geq 0$, and that $\alpha\geq\beta$ if $\alpha- \beta\geq 0$. Using the Killing form we define $H^\alpha$ by $\langle H^\alpha,H\rangle=\alpha (H)$ for all $H\in\mathfrak{h}$. The \emph{coroots} are the elements $H_\alpha=2\frac{H^\alpha}{\langle H^\alpha,H^\alpha\rangle}$.

A complex representation of $\mathfrak{g}$ is a homomorphism $\rho\colon\mathfrak{g}\rightarrow \mathfrak{gl}(V)$ where $V$ is a complex vector space. We will often refer to $V$ as the representation. 
We say that $\rho\colon\mathfrak{g}\rightarrow \mathfrak{gl}(V)$ and $\rho'\colon\mathfrak{g}\rightarrow \mathfrak{gl}(V')$ are equivalent if there exists a vector space isomorphism $\theta\colon V\rightarrow V'$ such that $\theta^{-1}\circ\rho'(X)\circ\theta=\rho(X)$ for all $X\in\mathfrak{g}$. A representation is said to be \emph{irreducible} if the only $\rho(\mathfrak{g})$-invariant subspaces of $V $ are $\{0\}$ and $V$ itself. An arbitrary representation $V$ decomposes into a sum of irreducible ones, though the decomposition is not necessarily unique. The equivalence classes of irreducible representations appearing however, are unique. A \emph{weight} $\omega$ is an element of $\mathfrak{h}^*$ paired with a subspace $V_\omega\subset V$ such that $\rho(H)v=\omega(H)v$ for all $v\in V_\omega$ and $H\in\mathfrak{h}$. The vector $v$ is called a \emph{weight vector}. The partial ordering of the roots gives us a partial ordering of the weights of a representation. An irreducible representation is determined up to equivalence by its \emph{highest weight}. There is a set $\{\omega_1,...,\omega_n\}\subset\mathfrak{h}^*$ called \emph{fundamental weights} defined as the dual base of the simple coroots, i.e. $\omega_i(H_{\alpha_j})=\delta_{ij}$. Each weight can be written as a sum of fundamental weights with rational coefficients. We denote such weights as $\omega_{m_1,...,m_n}=\sum_{i=1}^n{m_i\omega_i}$ and a corresponding weight vector by $v_{m_1,...,m_n}$. We denote an irreducible representation with highest weight $\omega_{m_1,...,m_n}$ by $\rho_{m_1,...,m_n}^{\mathbb{C}}$. We will sometimes refer to it as a representation with highest weight $(m_1,...,m_n)$.
The \emph{Weyl group} $\mathcal{W}$ of $\mathfrak{g}$ acts on $\mathfrak{h}^*$ and is generated by the reflections $\beta\mapsto\beta - 2\frac{\langle\beta,\alpha\rangle}{\langle\alpha,\alpha\rangle}\alpha$, $\alpha\in A$. To see what weights appear in a representation $V$ with highest weight $\omega$ one starts by considering the set $\mathcal{W}\cdot \omega$. These points are the corners of a convex set $\mathcal{C}\subset\mathfrak{h}^*$. The weights appearing are those $\omega+\sum{a_i\alpha_i}$, $a_i\in\mathbb{Z}$, that lie in $\mathcal{C}$.
Tensor products will be important when realizing representations.
Let $\rho^1\colon\mathfrak{g}_1\rightarrow \mathfrak{gl}(V)$ and $\rho^2\colon\mathfrak{g}_2\rightarrow \mathfrak{gl}(U)$ be representations. We will denote by $\rho^1\boxtimes \rho^2\colon\mathfrak{g}_1\oplus\mathfrak{g}_2\rightarrow \mathfrak{gl}(V\otimes U)$ the representation defined by 
$$((\rho^1\boxtimes\rho^2) (X,Y))( v\otimes u):=(\rho^1(X)v)\otimes u +v\otimes(\rho^2(Y)u).$$ 
If $\rho^1$ and $\rho^2$ are irreducible, so is $\rho^1\boxtimes \rho^2$. In fact, any irreducible complex representation of a nonsimple complex Lie algebra can be constructed by $\boxtimes$-products of irreducible representations of its simple factors.
Let $\rho^1\colon\mathfrak{g}_1\rightarrow \mathfrak{gl}(V)$ and $\rho^2\colon\mathfrak{g}_1\rightarrow \mathfrak{gl}(U)$ be two representations of $\mathfrak{g}_1$. We will denote by $\rho^1\otimes \rho^2\colon\mathfrak{g}_1\rightarrow \mathfrak{gl}(V\otimes U)$ the representation defined by $$((\rho^1\otimes\rho^2) (X))( v\otimes u):=(\rho^1(X)v)\otimes u +v\otimes(\rho^2(X)u).$$

A complex representation of a real Lie algebra $\mathfrak{g}$ is a homomorphism $\rho\colon\mathfrak{g}\rightarrow\mathfrak{gl}(V)$, where $V$ is a complex vector space. Two representations $\rho\colon\mathfrak{g}\rightarrow \mathfrak{gl}(V)$ and $\rho'\colon\mathfrak{g}\rightarrow \mathfrak{gl}(V')$ are said to be \emph{equivalent} if there exists a vector space isomorphism $\theta\colon V\rightarrow V'$ such that $\theta^{-1}\circ\rho'(X)\circ\theta=\rho(X)$ for all $X\in\mathfrak{g}$.
The following lemma allows us to use the powerful machinery of representation theory of complex semisimple Lie algebras on real semisimple Lie algebras, and in particular on Hermitian Lie algebras.
\begin{lma}
There is a one to one correspondence between (equivalence classes of) complex representations of a real semisimple Lie algebra $\mathfrak{g}$ and (equivalence classes of) complex representations of its complexification $\mathfrak{g}^{\mathbb{C}}=\mathfrak{g}\otimes_{\mathbb{R}}\mathbb{C}$. Further, irreducible representations of $\mathfrak{g}$ correspond to irreducible representations of $\mathfrak{g}^\mathbb{C}$.
\end{lma}
\begin{proof}
Given a representation $\rho\colon\mathfrak{g}\rightarrow~\mathfrak{gl}(V)$ we get a representation $\rho^\mathbb{C}\colon\mathfrak{g}^{\mathbb{C}}\rightarrow\mathfrak{gl}(V)$ by $\rho^{\mathbb{C}}(X\otimes z)v:=\rho(X)(zv)$.
From a representation $\rho\colon\mathfrak{g}^\mathbb{C}\rightarrow\mathfrak{gl}(V)$ we get a representation of $\mathfrak{g}$ by restriction. Since the adjoint action of $GL(V)$ and multiplication by complex numbers commute we get that the correspondence maps equivalence classes of representations to equivalence classes of representations.  
Given an irreducible representation $\rho\colon\mathfrak{g}\rightarrow\mathfrak{gl}(V)$ we get no invariant subspaces by acting with a larger algebra via $\rho^\mathbb{C}$. Thus $\rho$ irreducible implies $\rho^\mathbb{C}$ irreducible.
Starting with an irreducible representation $\rho\colon\mathfrak{g}^\mathbb{C}\rightarrow\mathfrak{gl}(V)$, assume that the restriction $\rho|\colon\mathfrak{g}\rightarrow\mathfrak{gl}(V)$ is not irreducible, i.e. there is an $\rho|$-invariant subspace $W\subset V$. But then $\rho(X\otimes z)W=\rho(X\otimes 1)zW=\rho|(X)zW=\rho|(X)W=W$ which contradicts that $\rho$ is irreducible.
 \end{proof}
We denote by $\rho_{m_1,...,m_n}$ the irreducible complex representation of a Hermitian Lie algebra $\mathfrak{g}$ that corresponds to the representation $\rho_{m_1,...,m_n}^\mathbb{C}$ of $\mathfrak{g}^\mathbb{C}$.

\subsection{Invariant forms}
Up to this point in the section, only homomorphisms into $\mathfrak{gl}(V)$ have been mentioned. We want to use representation theory to understand homomorphisms into $\mathfrak{su}(p,q)$. We do this by considering a homomorphism $\rho\colon\mathfrak{g}\rightarrow\mathfrak{su}(p,q)$ as a homomorphism $\rho\colon\mathfrak{g}\rightarrow\mathfrak{gl}(p+q,\mathbb{C})$ whose image is contained in a fixed subalgebra $\mathfrak{su}(p,q)\subset\mathfrak{gl}(p+q,\mathbb{C})$. An equivalent way of formulating this would be to say that $\rho$ is a representation on $V\simeq\mathbb{C}^{p+q}$ which leaves a nondegenerate Hermitian form $F$ of signature $(p,q)$ invariant. The following lemma and theorem tell us that, for the $\mathfrak{g}$:s that are of interest to us, any representation $V$ can be equipped with an invariant Hermitian form. Further, decompositions into irreducible representations are well behaved with respect to this additional structure. As the proofs of these results are rather long, and we suspect well known, we move them to the Appendix.

\begin{lma}\label{form}
Let $\rho\colon \mathfrak{g}\rightarrow \mathfrak{gl}(V)$ be an irreducible complex representation of a Hermitian Lie algebra $\mathfrak{g}$ containing only $\mathfrak{su}(p,q)$ and $\mathfrak{sp}(2m,\mathbb{R})$ as simple factors. Then there exists a nondegenerate Hermitian form $F$ on $V$ that is invariant under $\rho(\mathfrak{g})$. Furthermore, if $F_1$ and $F_2$ are two such forms then $F_1=cF_2$ for some $c\in\mathbb{R}$.
\end{lma}

\begin{thm}\label{43}
Let $U$ be a complex vector space equipped with a nondegenerate Hermitian form $F$. Further let $\rho\colon \mathfrak{g}\rightarrow \mathfrak{su}(U,F)$ be a homomorphism, with $\mathfrak{g}$ containing only $\mathfrak{su}(p,q)$ and $\mathfrak{sp}(2m,\mathbb{R})$ as simple factors. If $\rho$ is not an irreducible representation on $U$ there is a decomposition $U=\bigoplus_{i=1}^n U_i$, orthogonal with respect to $F$, such that $U_i$ is irreducible and $F|_{U_i}$ is nondegenerate for all $i$. This means that $\rho=(\rho^1,..., \rho^n)\colon\mathfrak{g}\rightarrow \bigoplus_i{\mathfrak{su}(r_i,s_i)}\subset \mathfrak{su}(U,F)$ where the inclusion is holomorphic and all $\rho^i$ are irreducible representations.
\end{thm}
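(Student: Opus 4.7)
The plan is to obtain the decomposition via the isotypic decomposition of $U$ as a $\mathfrak{g}$-module, followed by a refinement within each isotypic component. First I would invoke complete reducibility: by the preceding lemma, complex representations of $\mathfrak{g}$ correspond to those of the semisimple complex Lie algebra $\mathfrak{g}^{\mathbb{C}}$, so Weyl's theorem splits $U$ into irreducible $\mathfrak{g}$-subrepresentations. Grouping the summands by isomorphism class gives the isotypic decomposition $U=\bigoplus_{[W]}V_{[W]}$, with $V_{[W]}\cong W\otimes M_{[W]}$ for a finite-dimensional multiplicity space $M_{[W]}$ on which $\mathfrak{g}$ acts trivially.

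The central input is Lemma \ref{form}: every irreducible complex representation $W$ of $\mathfrak{g}$ admits an invariant nondegenerate Hermitian form $H_W$, which furnishes an equivariant isomorphism $W\cong\overline{W^*}$. Combined with Schur's lemma this has two consequences. First, any invariant sesquilinear pairing between non-isomorphic irreducibles vanishes, so the distinct isotypic components are mutually $F$-orthogonal and each $F|_{V_{[W]}}$ is nondegenerate. Second, on $V_{[W]}\cong W\otimes M_{[W]}$ every invariant sesquilinear form factors as $H_W\otimes H_M$ for some sesquilinear form $H_M$ on $M_{[W]}$; hermiticity and nondegeneracy of $F$ force the same for $H_M$.

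From here the refinement is immediate. Diagonalize the nondegenerate Hermitian form $H_M$ on the finite-dimensional complex space $M_{[W]}$ to obtain an orthogonal decomposition $M_{[W]}=\bigoplus_j L_j$ into one-dimensional pieces. Each $W\otimes L_j$ is then an irreducible $\mathfrak{g}$-subrepresentation on which $F$ restricts to a nondegenerate Hermitian form. Assembling over all isotypic components yields $U=\bigoplus_i U_i$ with $F|_{U_i}$ nondegenerate and each $\rho^i\colon\mathfrak{g}\to\mathfrak{su}(U_i,F|_{U_i})$ irreducible; the resulting inclusion $\bigoplus_i\mathfrak{su}(r_i,s_i)\hookrightarrow\mathfrak{su}(U,F)$ is the standard block-diagonal embedding, which is manifestly holomorphic. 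The main technical burden will be the Schur-theoretic identification of invariant sesquilinear forms in the second paragraph; this relies essentially on the self-duality $W\cong\overline{W^*}$ provided by Lemma \ref{form}, without which isotropic irreducible subrepresentations could persist and obstruct decomposition into pieces that are simultaneously nondegenerate and irreducible.
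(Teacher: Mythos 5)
Your argument is correct, but it takes a genuinely different route from the paper. The paper's proof is an explicit induction: it starts from an arbitrary decomposition into irreducibles, splits off any summand on which $F$ is nondegenerate (its $F$-orthocomplement being invariant), shows that the remaining degenerate irreducible summands are totally isotropic, pairs two of them into a nondegenerate block of split signature $(n,n)$, and then uses the invariant form $F_V$ from Lemma \ref{form} together with an explicit change of basis (vectors of the form $v_i\pm w_i$) to exhibit two invariant nondegenerate subspaces inside that block, after which it iterates. You instead work globally: complete reducibility and the isotypic decomposition $U=\bigoplus_{[W]}W\otimes M_{[W]}$, with Lemma \ref{form} entering as the conjugate-self-duality $W\cong\overline{W^*}$, so that Schur's lemma kills pairings between distinct isotypic components and identifies $F$ on each component as $H_W\otimes H_M$, which you then diagonalize in the multiplicity space. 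The two proofs hinge on the same input (Lemma \ref{form}); your version is shorter and makes transparent exactly where the hypothesis on the simple factors of $\mathfrak{g}$ is used (without conjugate-self-duality a hyperbolic pair $W\oplus\overline{W^*}$ with $W\not\cong\overline{W^*}$ genuinely cannot be split, which is what your final remark correctly identifies), while the paper's version is more elementary, avoiding multiplicity spaces and the Schur-theoretic classification of invariant sesquilinear forms at the cost of the hands-on matrix manipulation; indeed the paper's isotropic-pair analysis is precisely your case of $H_M$ of split signature on a two-dimensional multiplicity space. When you write this up, do carry out the flagged Schur step carefully (a nonzero invariant pairing between irreducibles $V$ and $V'$ forces $V'\cong\overline{V^*}$, and only then does Lemma \ref{form} convert this to $V'\cong V$), and note that hermiticity and nondegeneracy of $H_M$ follow by evaluating $F=H_W\otimes H_M$ against a pair of vectors with $H_W\neq 0$; with those details supplied your proof is complete and matches the statement, including the block-diagonal (hence holomorphic) inclusion, which the paper likewise asserts without further argument.
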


\subsection{On equivalence}
Using representation theory to understand homomorphisms $\rho\colon\mathfrak{g}\rightarrow\mathfrak{su}(p,q)\subset\mathfrak{gl}(p+q,\mathbb{C})$ is not without problems.
Recall that two representations $\eta,\rho\colon\mathfrak{g}\rightarrow\mathfrak{gl}(p+q,\mathbb{C})$ are said to be equivalent if $\rho(\cdot)=\mbox{Ad}(g)\eta(\cdot)$ for some $g\in GL(p+q,\mathbb{C})$. We say that two homomorphisms $\eta,\rho\colon\mathfrak{g}\rightarrow\mathfrak{su}(p,q)$ are equivalent if $\rho(\cdot)=\mbox{Ad}(g)\eta(\cdot)$ for some $g\in SU(p,q)$. 
To differentiate between these two types of equivalence for homomorphisms $\eta,\rho\colon\mathfrak{g}\rightarrow\mathfrak{su}(p,q)\subset\mathfrak{gl}(p+q,\mathbb{C})$ we call the former \emph{equivalence as representations} and the latter \emph{equivalence as homomorphisms}. The following lemma describes the relation between these equivalences for an irreducible representation.
\begin{lma}\label{equivs}
Let $\rho\colon\mathfrak{g}\rightarrow\mathfrak{su}(p,q)=\mathfrak{su}(V,F)$ be an irreducible representation, where $\mathfrak{g}$ has simple factors of type $\mathfrak{sp}(2n)$ and $\mathfrak{su}(r,s)$. If $p\neq q$ the equivalence class of representations containing $\rho$ equals the equivalence class of homomorphisms containing $\rho$. If $p=q$ the equivalence class of representations containing $\rho$ contains two equivalence classes of homomorphisms. These are those of $\rho$ and $\theta\circ\rho$ where $\theta\colon\mathfrak{su}(p,p)\rightarrow\mathfrak{su}(p,p)$ is an antiholomorphic isomorphism.
\end{lma}
\begin{proof}
Fixing a suitable orthonormal basis for $V$, a matrix $X\in\mathfrak{su}(p,q)$ must satisfy
$$X^*I_{p,q}+I_{p,q}X=0$$
where $I_{p,q}$ is a diagonal matrix with the first $p$ entries equal to one and the last $q$ entries equal to minus one.
We then get the following matrix description for $\mathfrak{g}'=\mathfrak{su}(p,q)$
\begin{eqnarray*}
\mathfrak{g}'=\{ 
\left(\begin{array}{cc}
A&B\\
B^*&C
\end{array}\right) 
:A\in M_p(\mathbb{C}),B\in M_{p,q}(\mathbb{C}),C\in M_q(\mathbb{C}), \\A^*=-A, C^*=-C,\mbox{tr}(A)+\mbox{tr}(C)=0\}.
\end{eqnarray*}
We fix a Cartan decomposition $\mathfrak{g}'=\mathfrak{k}'+\mathfrak{p}'$ where 
\begin{eqnarray*}
\mathfrak{k}'=\{ 
\left(\begin{array}{cc}
A&0\\
0&C
\end{array}\right) \}\, ,
\mathfrak{p}'=\{ 
\left(\begin{array}{cc}
0&B\\
B^*&0
\end{array}\right) \}\, ,
\end{eqnarray*} 
The complex structure $Z_{\mathfrak{su}(p,q)}$ is then the diagonal matrix with the first $p$ entries equal to $\frac{iq}{p+q}$ and the last $q$ entries equal to $\frac{-iq}{p+q}$.

Now consider two homomorphisms $\rho,\rho'\colon\mathfrak{g}\rightarrow\mathfrak{su}(p,q)$ that are equivalent as irreducible representations, i.e. $\rho'(\cdot)=g\rho(\cdot)g^{-1}$ for some $g\in GL(p+q,\mathbb{C})$. With respect to the basis above $\rho$ and $\rho'$ must satisfy
\begin{eqnarray*}
\rho(X)^*I_{p,q}+I_{p,q}\rho(X)=0\\
\rho'(X)^*I_{p,q}+I_{p,q}\rho'(X)=0
\end{eqnarray*}
We can rewrite the second equation as
$(g^*)^{-1}\rho(X)^*g^*I_{p,q}+I_{p,q}g\rho(X)g^{-1}=0$ or equivalently
$\rho(X)^*g^*I_{p,q}g+g^*I_{p,q}g\rho(X)=0$.
Being irreducible $\rho$ only preserves the Hermitian forms $cF$ by Lemma \ref{form}. Thus $g^*I_{p,q}g=cI_{p,q}$ for some $c\in\mathbb{R}$. The center of $GL(p+q,\mathbb{C})$ acts trivially so we can assume that $g\in SL(p+q,\mathbb{C})$.
Taking determinants of $g^*I_{p,q}g=cI_{p,q}$ we get $c^{p+q}=1$. If $c=1$ then $g\in SU(p,q)$ and hence $\rho$ and $\rho'$ are equivalent as homomorphisms. 
Next we consider the possibility of $c=-1$. The signature of $g^*I_{p,q}g$ is $(p,q)$ for any $g\in GL(p+q,\mathbb{C})$ and the signature of $-I_{p,q}$ is $(q,p)$. Hence if $p\neq q$ no such $g$ exists, thus equivalence as representations implies equivalence as homomorphisms when $p\neq q$.
If $p=q$ we can find a $g$ realising $g^*I_{p,p}g=-I_{p,p}$, namely $g_0=
\left(\begin{array}{cc}
0 &I_p\\
-I_p&0
\end{array}\right)$.
Since $g_0\not\in SU(p,p)$ the equivalence induced by $g_0$ is as a representation but \emph{not} as a homomorphism. We thus have at least two equivalence classes of homomorphisms, $\rho$ and $g_0\rho(\cdot) g_0^{-1}$, in the equivalence class of representations of $\rho$. Suppose $g_1\rho(\cdot) g_1^{-1}$ is a third one. By the reasoning above $g_1$ must satisfy either 
$g_1^*I_{p,p}g_1=I_{p,p}$ or $g_1^*I_{p,p}g_1=-I_{p,p}$. In the first case $g_1\in SU(p,p)$ and hence $g_1\rho(\cdot) g_1^{-1}$ is equivalent as a homomorphism to $\rho$. In the second case we have $g_0^*(g_1^*I_{p,p}g_1)g_0=g_0^*(-I_{p,p})g_0=I_{p,p}$, i.e. $g_1g_0\in SU(p,p)$. This implies that $g_1\rho(\cdot) g_1^{-1}$ is equivalent as a homomorphism to $g_0\rho(\cdot) g_0^{-1}$. Thus there are exactly two equivalence classes of homomorphisms contained in the equivalence class of representations of an irreducible  representation $\rho\colon\mathfrak{g}\rightarrow\mathfrak{su}(p,p)$. We observe that $g_0Z_{\mathfrak{su}(p,p)}g_0^{-1}=-Z_{\mathfrak{su}(p,p)}$, i.e. $\theta:=\mbox{Ad}(g_0)\colon\mathfrak{su}(p,p)\rightarrow\mathfrak{su}(p,p)$ is an antiholomorphic isomorphism.
\end{proof}

The above lemma tells us that the equivalence class of representations of an irreducible representation $\rho$ contains at most two equivalence classes of homomorphisms, that of $\rho$ and that of $\theta\circ\rho$. By Lemma \ref{factor1} we have that $\rho$ is tight if and only if $\theta\circ\rho$ is tight. Thus tightness is well defined on equivalence classes of irreducible representations.

We should note here that tightness is \emph{not} well defined for equivalence classes of reducible representations. 
Using the matrix model for $\mathfrak{su}(p,q)$ from the proof above we illustrate this in the following example.
Let $\rho\colon\mathfrak{su}(1,1)\rightarrow\mathfrak{su}(2,2)$ be the homomorphism $$\rho(
\left(\begin{array}{cc}
a&b\\
\bar{b}&\bar{a}
\end{array}\right) 
)=
\left(\begin{array}{cccc}
a&0&b&0\\
0&a&0&b\\
\bar{b}&0&\bar{a}&0\\
0&\bar{b}&0&\bar{a}
\end{array}\right).$$
The representation $\rho$ is reducible so we can factor it as in Theorem \ref{43}, 
$\rho=\iota\circ(Id,Id)\colon\mathfrak{su}(1,1)\rightarrow\mathfrak{su}(1,1)\oplus\mathfrak{su}(1,1)\rightarrow\mathfrak{su}(2,2)$ where $\iota$ is the tight and holomorphic embedding 
 $$\iota(
\left(\begin{array}{cc}
a&b\\
\bar{b}&\bar{a}
\end{array}\right) ,\left(\begin{array}{cc}
c&d\\
\bar{d}&\bar{c}
\end{array}\right)
)=
\left(\begin{array}{cccc}
a&0&b&0\\
0&c&0&d\\
\bar{b}&0&\bar{a}&0\\
0&\bar{d}&0&\bar{c}
\end{array}\right)$$
and $(Id,Id)$ is the product of two identity homomorphisms.
Conjugating $\rho$ with 
$$g_0=\left(\begin{array}{cccc}
0&0&1&0\\
0&1&0&0\\
-1&0&0&0\\
0&0&0&1
\end{array}\right)\in GL(4,\mathbb{C})$$
we get the homomorphism
$\rho'\colon\mathfrak{su}(1,1)\rightarrow\mathfrak{su}(2,2)$, $$\rho'(
\left(\begin{array}{cc}
a&b\\
\bar{b}&\bar{a}
\end{array}\right) 
)=
\left(\begin{array}{cccc}
\bar{a}&0&-\bar{b}&0\\
0&a&0&b\\
-b&0&a&0\\
0&\bar{b}&0&\bar{a}
\end{array}\right).$$
We note here that $g_0\not\in SU(2,2)$, i.e. $\rho$ and $\rho'$ are equivalent as representations but not as homomorphisms.
We can factor $\rho'$ as 
$\rho'=\iota\circ(\theta,Id)\colon\mathfrak{su}(1,1)\rightarrow\mathfrak{su}(1,1)\oplus\mathfrak{su}(1,1)\rightarrow\mathfrak{su}(2,2)$
where $\theta$ is the antiholomorphic ismorphism
$$\theta(\left(\begin{array}{cc}
a&b\\ 
\bar{b}&\bar{a}
\end{array}\right))=
\left(\begin{array}{cc}
\bar{a}&-\bar{b}\\
-b&a
\end{array}\right).$$ 
The homomorphism $\iota$ is tight and holomorphic. The identity homomorphism is tight and positive, so the product of two identity homomorphisms is tight by Lemma \ref{factor2}. Thus $\iota\circ (Id,Id)=\rho$ is tight by Lemma \ref{factor3}. The antiholomorphic isomorphism $\theta$ is tight and negative. The product of the identity homomorphism and $\theta$ is thus nontight by Lemma \ref{factor2}. The composition $\iota\circ(\theta,Id)=\rho'$ is thus nontight by Lemma \ref{factor3}.

\subsection{Regular subalgebras}
In this subsection we give a brief description of regular subalgebras, both complex and Hermitian. For a thorough treatment of complex regular subalgebras see \cite[p.~142-151]{B15}, and for Hermitian regular subalgebras see \cite[p.~269-273]{B6}. 

Let $\mathfrak{g}_1^{\mathbb{C}}$ be a complex semisimple Lie algebra. A subalgebra $\mathfrak{g}_2^{\mathbb{C}}\subset\mathfrak{g}_1^{\mathbb{C}}$ is called a \emph{regular subalgebra} if there exists a Cartan subalgebra $\mathfrak{h}_1^{\mathbb{C}}\subset\mathfrak{g}_1^{\mathbb{C}}$ such that $\mathfrak{g}_2^{\mathbb{C}}$ is spanned by elements of $\mathfrak{h}_1^{\mathbb{C}}$ and root vectors relative to $\mathfrak{h}_1^{\mathbb{C}}$. If we restrict our attention to semisimple regular subalgebras these can be described in terms of a Cartan subalgebra and a subset of the root system. More precisely, let $\mathfrak{g}_1^{\mathbb{C}}=\mathfrak{h}_1^{\mathbb{C}}+\sum_{\alpha\in A}{\mathfrak{g}_\alpha}$ be the root space decomposition of $\mathfrak{g}_1^{\mathbb{C}}$ with respect to $\mathfrak{h}_1^\mathbb{C}$.
A subset $\Gamma\subset A$ satifying  
\begin{enumerate}
\item if $\alpha,\beta\in \Gamma$ then $\alpha -\beta \not\in A$,
\item $\Gamma$ is linearly independent in $(\mathfrak{h}_1^\mathbb{C})^*$,
\end{enumerate}
is called a \emph{Dynkin $\Pi$-system}. It defines a regular subalgebra as follows. Let $A(\Gamma):=\sum_{\gamma\in\Gamma}{\mathbb{Z}\gamma}\cap A$ and define $\mathfrak{g}^{\mathbb{C}}(\Gamma,\mathfrak{h}_1^\mathbb{C}):=\sum_{\gamma\in\Gamma}{\mathbb{C}H_\gamma}+\sum_{\gamma\in A(\Gamma)}{\mathfrak{g}_\gamma}$.

Let $\mathfrak{g}$ be a Hermitian Lie algebra with Cartan decomposition $\mathfrak{g}=\mathfrak{k}+\mathfrak{p}$. Choose a maximal abelian subalgebra $\mathfrak{h}\subset\mathfrak{k}$. The complexification $\mathfrak{h}^\mathbb{C}$ is then a Cartan subalgebra of $\mathfrak{g}^\mathbb{C}$. Since the complex structure $Z$ lies in the center of $\mathfrak{k}$ it is contained in any maximal abelian subalgebra $\mathfrak{h}$ and hence in $\mathfrak{h}^\mathbb{C}$. 
Since $ad(Z)^2(X)=\begin{cases}
 0,\mbox{ } X\in \mathfrak{k}^\mathbb{C}\\
 -X,\mbox{ } X\in\mathfrak{p}^\mathbb{C}
\end{cases}$
we know that each root space $\mathfrak{g}_\alpha$ is contained in either $\mathfrak{p}^\mathbb{C}$ or $\mathfrak{k}^\mathbb{C}$. We say that $\alpha$ is compact (respectively noncompact) if $\mathfrak{g}_\alpha\subset\mathfrak{k}^\mathbb{C}$ (respectively $\mathfrak{g}_\alpha\subset\mathfrak{p}^\mathbb{C}$).
We choose an ordering of the root system such that a noncompact root $\alpha$ is positive if $\alpha(Z)=i$. The root system will then have one noncompact root in each connected component of its Dynkin diagram. Let $A$ denote the root system of $\mathfrak{g}^\mathbb{C}$ with respect to an $\mathfrak{h}^\mathbb{C}$ chosen as above.
Let $\Gamma\subset A$ be a Dynkin $\Pi$-system further satisfying
\begin{enumerate}
\setcounter{enumi}{+2}
\item each connected component in the Dynkin diagram of $\Gamma$ contains exactly one noncompact root.
\end{enumerate}
$\Gamma$ defines a real subalgebra of $\mathfrak{g}$ by $\mathfrak{g}(\Gamma):=\mathfrak{g}^{\mathbb{C}}(\Gamma,\mathfrak{h}^\mathbb{C})\cap\mathfrak{g}$. The subalgebra $\mathfrak{g}(\Gamma)$ is called a \emph{regular subalgebra}. It is a Hermitian Lie algebra and can be equipped with a complex structure such that the inclusion homomorphism corresponds to a holomorphic map \cite[Proposition 3]{B6}. We drop the dependence of $\mathfrak{h}$ in the notation since different choices of $\mathfrak{h}\subset\mathfrak{k}$ will give isomorphic subalgebras whose inclusion homomorphisms are equivalent.

\section{Criteria for tightness}
In each Hermitian symmetric space $\mathcal{X}$ we can holomorphically and isometrically embed $\mathbb{D}^{r_\mathcal{X}}$, the product of $r_{\mathcal{X}}$ Poincar\'e discs, where $r_{\mathcal{X}}$ is the rank of $\mathcal{X}$. Composing this embedding with the holomorphic diagonal embedding $d\colon \mathbb{D}\rightarrow \mathbb{D}^{r_\mathcal{X}}$ we get a tight and holomorphic map from $\mathbb{D}$ into $\mathcal{X}$ known as a \emph{diagonal disc}. Since diagonal discs are holomorphic they depend on the complex structure of $\mathcal{X}$. We will abuse the notation a bit and refer to the corresponding Lie algebra homomorphisms as diagonal discs also. Diagonal discs are tight and play an important role in the following lemma from \cite[Lemma 8.1]{B8}.

\begin{lma}\label{diagd2}	
Let $\rho\colon\mathfrak{g}_1\rightarrow\mathfrak{g}_2$ be a homomorphism between Hermitian Lie algebras.
Further let $d_i\colon \mathfrak{su}(1,1)\rightarrow\mathfrak{g}_i$ be diagonal discs. Then $\rho$ corresponds to a tight and positive map if and only if 
\begin{equation}
\langle \rho \circ d_1 (Z_{\mathfrak{su}(1,1)}),Z_2\rangle_2 =\langle d_2 (Z_{\mathfrak{su}(1,1)}),Z_2\rangle_2.
\end{equation}
\end{lma}
Combining Lemma \ref{diagd2} with Lemma \ref{positive} we have the following corollary. We denote again the set of complex structures of a Hermitian symmetric space $\mathcal{X}_1$ by $\mathcal{J}_1$ and by $\mathcal{J}_1'$ a minimal subset of $\mathcal{J}_1$ fulfilling $\mathcal{J}_1'\cup -\mathcal{J}_1'=\mathcal{J}_1$. 

\begin{cor}\label{diagdcor}
Let $\rho\colon\mathfrak{g}_1\rightarrow\mathfrak{g}_2$ be a homomorphism between Hermitian Lie algebras.
Further let $d_1^J\colon \mathfrak{su}(1,1)\rightarrow(\mathfrak{g}_1,J)$ be diagonal discs for different complex structures $J\in\mathcal{J}_1'$ and $d_2\colon \mathfrak{su}(1,1)\rightarrow\mathfrak{g}_2$ a diagonal disc. Then $\rho$ corresponds to a tight map if and only if there is a $J\in\mathcal{J}_1'$ such that
\begin{equation*}
|\langle \rho \circ d_1^J (Z_{\mathfrak{su}(1,1)}),Z_2\rangle_2| =|\langle d_2 (Z_{\mathfrak{su}(1,1)}),Z_2\rangle_2|.
\end{equation*}
\end{cor}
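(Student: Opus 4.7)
The plan is simply to run Theorem \ref{diagd} together with Lemma \ref{positive}, letting the absolute values absorb the sign ambiguity that arises from the two possible elements $\pm J$ of $\mathcal{J}$ compatible with a given homomorphism.

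For the forward implication, assume $\rho$ is tight. By Lemma \ref{positive} there exists a complex structure $J'$ on $\mathcal{X}_1$ such that $\rho\colon(\mathfrak{g}_1,J')\to\mathfrak{g}_2$ is tight and positive, so Theorem \ref{diagd} gives
\begin{equation*}
\langle \rho\, d_1^{J'} Z_{su(1,1)}, Z_2\rangle = \langle d_2 Z_{su(1,1)}, Z_2\rangle.
\end{equation*}
By the defining property of $\mathcal{J}_1^{'}$, exactly one of $J'$ and $-J'$ lies in $\mathcal{J}_1^{'}$. If $J'\in\mathcal{J}_1^{'}$, set $J=J'$ and the equality of absolute values is immediate. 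If instead $-J'\in\mathcal{J}_1^{'}$, set $J=-J'$; one notes that the diagonal disc is characterised (up to its image) as a tight holomorphic embedding, that $Z_{-J'}=-Z_{J'}$, and that a diagonal disc sends $Z_{su(1,1)}$ to a positive multiple of the relevant central element of $\mathfrak{k}_1$. Consequently $d_1^{-J'}(Z_{su(1,1)}) = -d_1^{J'}(Z_{su(1,1)})$, so the signed quantity merely flips sign and the absolute value equality still holds.

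For the reverse implication, suppose $|\langle \rho\, d_1^J Z_{su(1,1)}, Z_2\rangle| = |\langle d_2 Z_{su(1,1)}, Z_2\rangle|$ for some $J\in\mathcal{J}_1^{'}$. If the sign in the first expression is positive, Theorem \ref{diagd} gives tightness (and positivity) of $\rho$ with respect to $J$ directly. If it is negative, invoke the same sign-flip observation: replacing $J$ by $-J$ turns the signed equality into a positive equality, so Theorem \ref{diagd} applied to $\rho\colon(\mathfrak{g}_1,-J)\to\mathfrak{g}_2$ yields tightness (and positivity) with respect to $-J$. Either way $\rho$ is tight as a homomorphism, since tightness is defined through $\|\rho^*\kappa_{G_2}\|$ and is manifestly independent of the choice of complex structure on $\mathcal{X}_1$.

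The only genuine verification beyond citing the two earlier results is the sign-flip identity $d_1^{-J}(Z_{su(1,1)}) = -d_1^{J}(Z_{su(1,1)})$, and this is essentially bookkeeping about how the canonical normalisations of a diagonal disc transform under $J\mapsto -J$. I would expect no real obstacle beyond making that normalisation explicit.
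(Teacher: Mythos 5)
Your proposal is correct and follows exactly the route the paper intends: the paper states the corollary as an immediate consequence of "combining Theorem \ref{diagd} with Lemma \ref{positive}" without further detail, and your argument simply fills in that combination, with the sign-flip identity $d_1^{-J}(Z_{su(1,1)})=-d_1^{J}(Z_{su(1,1)})$ (e.g.\ via $d_1^{-J}=d_1^{J}\circ\theta$) handling the absolute values. No gap.
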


\begin{thm}\label{nya}
Let $\rho\colon\mathfrak{g}\rightarrow \mathfrak{su}(r,s)$ be an irreducible representation, where $\mathfrak{g}$ is a Hermitian Lie algebra with simple factors of type $\mathfrak{su}(p,q)$ and $\mathfrak{sp}(2p)$. Further let $\iota\colon\mathfrak{g}_0\rightarrow\mathfrak{g}$ be a tight, injective and holomorphic homomorphism, where $\mathfrak{g}_0=\mathfrak{su}(1,1)$ or $\mathfrak{su}(1,1)\oplus \mathfrak{su}(1,1)$. Let $\rho\circ\iota=\sum{\rho^i}$ be a decomposition into irreducible representations. Each $\rho^i$ defines a homomorphism $\rho^i\colon\mathfrak{g}_0\rightarrow \mathfrak{su}(r_i,s_i)$ for some $(r_i,s_i)$. If there is one $\rho^i$ nontight then $\rho$ is nontight.
\end{thm}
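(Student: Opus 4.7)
The plan is to reduce the problem to the composition lemmas of Section \ref{prel} via the decomposition supplied by Theorem \ref{43}. First I would apply that theorem to $\rho\circ\iota\colon\mathfrak{g}_0\to su(r,s)$, which is legitimate because $\mathfrak{g}_0=su(1,1)$ or $su(1,1)\oplus su(1,1)$ has only $su$-type simple factors. This will give a factorization
\[
\rho\circ\iota\colon\mathfrak{g}_0\xrightarrow{F}\bigoplus_{i}su(r_i,s_i)\xrightarrow{H}su(r,s),
\]
with $F=(\rho^1,\dots,\rho^n)$ collecting the irreducible components and $H$ an injective, holomorphic inclusion; Lemma \ref{lma5} then makes $H$ strictly positive.

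Next, using the hypothesis that some $\rho^i$ is nontight, I would deduce via Lemma \ref{factor2} that $F$ itself is nontight, and then via Lemma \ref{factor3} (combined with the strict positivity of $H$) that $\rho\circ\iota = H\circ F$ is nontight. Finally, to conclude that $\rho$ is nontight I would argue by contradiction. If $\rho$ were tight, Lemma \ref{positive} would produce a complex structure $\tilde J$ on $\mathfrak{g}$ making $\rho\colon(\mathfrak{g},\tilde J)\to su(r,s)$ tight and positive; one then checks that $\iota$ remains tight as a map into $(\mathfrak{g},\tilde J)$, since tightness of $\iota$ forces (by Lemma \ref{factor2}) each projection $p_k\circ\iota$ to a simple factor $\mathfrak{g}_k$ to be tight, and this tightness is insensitive to the sign change $J_k\to -J_k$. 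Lemma \ref{factor1} applied with $f=\iota$ and $h=\rho$ then yields $\rho\circ\iota$ tight, contradicting the previous step.

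The hard part will be the compatibility step inside the contradiction: the complex structure $\tilde J$ from Lemma \ref{positive} is chosen so that $\rho$ is positive, but one must simultaneously retain tightness of $\iota$. This is where the assumption that $\rho$ is an \emph{irreducible} representation, together with the restriction on the simple factors of $\mathfrak{g}$, is essential — the irreducibility constrains how $\rho^*\kappa_{su(r,s)}$ distributes across the simple factors of $\mathfrak{g}$, making a compatible $\tilde J$ available so that Lemma \ref{factor1} applies.
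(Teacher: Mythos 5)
The first half of your argument coincides with the paper's: Theorem \ref{43} gives the factorization through $\bigoplus_i su(r_i,s_i)$, Lemma \ref{lma5} gives strict positivity of the inclusion, and Lemmas \ref{factor2} and \ref{factor3} give that $\rho\circ\iota$ is nontight. The gap is in your final step. You claim that if $\rho$ were tight, the complex structure $\tilde J$ produced by Lemma \ref{positive} would leave $\iota$ tight as a map into $(\mathfrak{g},\tilde J)$, ``since tightness of each projection $p_k\circ\iota$ is insensitive to the sign change $J_k\to -J_k$.'' That misreads Lemma \ref{factor2}: tightness of $\iota$ into a product requires the projections to be tight and \emph{all positive} or \emph{all negative}, and positivity is exactly what the sign change destroys. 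Concretely, if $\mathfrak{g}=\mathfrak{g}_1\oplus\dots\oplus\mathfrak{g}_n$ with $n\geq 2$ and $\iota$ is tight and holomorphic for $J=(J_1,\dots,J_n)$, then $\iota^*\kappa_{\mathfrak{g}_k,J_k}=r_k\kappa_{G_0}$ for each $k$, so for $\tilde J=(-J_1,J_2,\dots,J_n)$ one gets $\|\iota^*\kappa_{\mathfrak{g},\tilde J}\|=|-r_1+\sum_{k\geq 2}r_k|\pi<\sum_k r_k\pi$, i.e.\ $\iota$ is \emph{not} tight into $(\mathfrak{g},\tilde J)$, and Lemma \ref{factor1} cannot be applied with $f=\iota$, $h=\rho$. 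Since the theorem allows non-simple $\mathfrak{g}$ (and is used in the paper with $\mathfrak{g}=sp(4,\mathbb{R})\oplus su(1,1)$), this case cannot be dismissed; your closing suggestion that irreducibility of $\rho$ constrains $\rho^*\kappa_{su(r,s)}$ does not address the problem, which concerns $\iota$, not $\rho$.

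What is missing is the paper's replacement step: for each $J'\in\mathcal{J}'$ one must \emph{construct a new} tight embedding $\iota^{J'}\colon\mathfrak{g}_0\to(\mathfrak{g},J')$, obtained by precomposing the components of $\iota$ in the flipped factors with the antiholomorphic isomorphism $\theta$ of $\mathfrak{g}_0$, and then show that $\rho\circ\iota^{J'}$ is equivalent to $\rho\circ\iota$ \emph{as a representation} (the paper does this by a weight-space computation for representations of $su(1,1)^{\oplus 4}$, using that reversing the sign of $H$ in some tensor factors permutes the weight spaces). Equivalence guarantees the same irreducible pieces, hence nontightness of $\rho\circ\iota^{J'}$ for every $J'\in\mathcal{J}'$, and only then does Lemma \ref{factor11} (not a direct contradiction via Lemma \ref{factor1}) yield that $\rho$ is nontight. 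This equivalence-of-representations argument is the essential idea your proposal lacks.
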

\begin{proof}
Let $\mathcal{J}$ denote the set of complex structures of $\mathcal{X}$, the symmetric space associated to $\mathfrak{g}$, and $\mathcal{J}'$ a minimal subset such that $\mathcal{J}'\cup -\mathcal{J}'=\mathcal{J}$.
Assume for the moment that for each $J\in\mathcal{J}'$ there exists an embedding $\iota^J\colon\mathfrak{g}_0\rightarrow (\mathfrak{g},J)$ that is tight and holomorphic with respect to $J$. We assume further that $\iota^J$ is such that $\rho\circ\iota^J$ is equivalent to $\rho\circ\iota$ as a representation.
By Theorem \ref{43} we can factor each $\rho\circ\iota^J$ as \\
\centerline{\xymatrix{
\mathfrak{g}\ar[rr]^\rho &&\mathfrak{su}(r,s)\\
\mathfrak{g}_0\ar[u]^{\iota^J}\ar[rr]^{(\rho_J^1,...,\rho_J^m)} &&\oplus \mathfrak{su}(r_i,s_i)\ar[u]_{\iota_J'} 
}}
with $\iota_J'$ injective and holomorphic. Since all $\rho\circ\iota^J$ are equivalent as representations to $\rho\circ\iota$, $\rho^i_J$ is equivalent as a representation to $\rho^i$ for each $i,J$. By Lemma \ref{lma55} we know that $\iota_J'$ is strictly positive. If there is one $\rho^i$ nontight then $\rho^i_J$ is nontight for $J\in\mathcal{J}'$. This implies that $(\rho_J^1,...,\rho_J^m)$ is nontight for all $J\in\mathcal{J}'$ by Lemma \ref{factor2}.
Lemma \ref{factor3} then implies that $\iota_J'\circ(\rho_J^1,...,\rho_J^m)$ is nontight for alll $J\in\mathcal{J}'$.
since the diagram commutes this implies that $\rho\circ\iota^J$ is nontight for all $J\in\mathcal{J}'$. 
Lemma \ref{factor11} then implies that $\rho$ is nontight.
What remains to do is to construct $\iota^J$ with the properties described above. 

Let $\mathfrak{g}=\mathfrak{g}_1\oplus ...\oplus \mathfrak{g}_n$ be a decomposition into simple factors and $J=(J_1,...,J_n)$  the original complex structure for which $\iota=(\iota_1,...,\iota_n)$ is tight and holomorphic. We show that changing complex structure in one simple factor, say $\mathfrak{g}_1$, to $\hat{J}=(-J_1,J_2,...,J_n)$ we can construct $\hat{\iota}$ such that $\hat{\iota}\colon\mathfrak{g}_0\rightarrow (\mathfrak{g},\hat{J})$ is tight and holomorphic and such that $\rho\circ\hat{\iota}$ is equivalent to $\rho\circ\iota$ as a representation. We can then construct $\iota^J$ for any $J\in\mathcal{J}'$ by changing the complex structure in one factor at a time.

Let $\theta\colon\mathfrak{g}_0\rightarrow \mathfrak{g}_0$ be an antiholomorphic isomorphism. We construct our new tight and holomorphic embedding $\hat{\iota}:=(\iota_1\circ\theta,\iota_2,...,\iota_n)\colon \mathfrak{g}_0\rightarrow(\mathfrak{g},\hat{J})$. Define 
\begin{align*}
f\colon \mathfrak{g}_0\oplus \mathfrak{g}_0\rightarrow\mathfrak{g}, (X,Y)\mapsto (\iota_1(X),\iota_2(Y),...,\iota_n(Y)),\\ (Id,Id)\colon\mathfrak{g}_0\rightarrow\mathfrak{g}_0\oplus\mathfrak{g}_0, X\mapsto (X,X)\mbox{ and}\\ (\theta,Id)\colon\mathfrak{g}_0\rightarrow\mathfrak{g}_0\oplus\mathfrak{g}_0, X\mapsto (\theta(X),X). 
\end{align*}
We can factor our embeddings $\iota=f\circ (Id,Id)$, $\hat{\iota}=f\circ (\theta,Id)$. We need to show that the representations $\rho\circ\iota=\rho\circ f\circ (Id,Id)$ and  $\rho\circ\hat{\iota}=\rho\circ f\circ (\theta,Id)$ are equivalent. With the factorisations we see that it is sufficient to show that $\eta\circ (Id,Id)$ is equivalent to $\eta\circ (\theta,Id)$ for an arbitrary representation $\eta\colon \mathfrak{g}_0 \oplus \mathfrak{g}_0\rightarrow \mathfrak{gl}(V)$.

Let $\mathfrak{g}_0=\mathfrak{su}(1,1)\oplus \mathfrak{su}(1,1)$. An irreducible representation $(\rho_k,V^k)$ of $\mathfrak{su}(1,1)$ is the restriction of an irreducible representation of $\mathfrak{sl}(2,\mathbb{C})$. $V^k$ decomposes as $V^k=\oplus_{j=0}^k V_{k-2j}$ with $\rho_k(H)v_l=ilv_l$ for $v_l\in V_l$ and $H=i
\left(\begin{array}{cc}
1&0\\
0&-1
\end{array}\right)\in \mathfrak{su}(1,1)$.
An irreducible representation of $\mathfrak{g}_0^{\oplus 2}=\mathfrak{su}(1,1)^{\oplus 4}$ is the tensor product of four irreducible representations of $\mathfrak{su}(1,1)$, i.e. $\eta=\rho_{k_1}\boxtimes\rho_{k_2}\boxtimes\rho_{k_3}\boxtimes\rho_{k_4}\colon \mathfrak{su}(1,1)^{\oplus 4}\rightarrow \mathfrak{gl}(V^{k_1}\otimes V^{k_2}\otimes V^{k_3}\otimes V^{k_4})$. 
We show that any weight space of $\eta\circ (Id,Id)$ appears for $\eta\circ (\theta,Id)$ too. For each weight space $V_a\otimes V_b\otimes V_c\otimes V_d$ of $\eta$ there is also $V_{-a}\otimes V_{-b}\otimes V_c\otimes V_d$. We have
\begin{eqnarray*}
\eta\circ (Id,Id)(H,H) v_{a,b,c,d}=\eta(H,H,H,H) v_{a,b,c,d}\\
=\rho_{k_1}\boxtimes\rho_{k_2}\boxtimes\rho_{k_3}\boxtimes\rho_{k_4}(H,H,H,H)(v_a\otimes v_b\otimes v_c \otimes v_d)\\
=\rho_{k_1}(H)v_a \otimes v_b\otimes v_c \otimes v_d +...+v_a\otimes v_b \otimes v_c\otimes\rho_{k_4}(H)v_d\\
=i(a+b+c+d)v_{a,b,c,d}
\end{eqnarray*}
\thispagestyle{empty}
and 
\begin{eqnarray*}
\eta\circ (\theta,Id)(H,H) v_{-a,-b,c,d}=\eta (-H,-H,H,H)v_{-a,-b,c,d}\\
=\rho_{k_1}\boxtimes\rho_{k_2}\boxtimes\rho_{k_3}\boxtimes\rho_{k_4}(-H,-H,H,H)(v_{-a}\otimes v_{-b}\otimes v_c \otimes v_d)\\
=\rho_{k_1}(-H)v_{-a} \otimes v_{-b}\otimes v_c \otimes v_d +...+v_{-a}\otimes v_{-b} \otimes v_c\otimes\rho_{k_4}(H)v_d\\
=i(a+b+c+d)v_{-a,-b,c,d}
\end{eqnarray*}
Hence as representations of $\mathfrak{su}(1,1)\oplus \mathfrak{su}(1,1)$, $(\eta\circ(Id,Id), V^{k_1}\otimes ...\otimes V^{k_4})$ and $(\eta\circ(\theta,Id), V^{k_1}\otimes ...\otimes V^{k_4})$ decomposes into the same weight spaces. This implies that they are equivalent representations.
The case $\mathfrak{g}_0=\mathfrak{su}(1,1)$ is done in the same way.
 \end{proof}

We now have the tools we need to determine if a representation corresponds to a tight map.
We will also need the following elementary fact about holomorphic maps between Hermitian symmetric spaces. It says roughly that if a map between two irreducible Hermitian symmetric spaces is holomorphic in one direction, it is holomorphic in all directions.
\begin{lma}\label{nonholo}
Let $\rho\colon\mathfrak{g}_1\rightarrow\mathfrak{g}_2$ be a homomorphism between simple Hermitian Lie algebras.
If there is one nonzero vector $X\in\mathfrak{p}_1$ such that $\rho([Z_1,X])=[Z_2,\rho(X)]$ then this is true for all $X\in\mathfrak{p}_1$.
\end{lma}
\begin{proof}
 Consider $X'=[Y,X]$ for some $Y\in\mathfrak{k}_1$. Using that $Z_i$ is in the center of $\mathfrak{k}_i$ and the Jacobi identity we have
\begin{eqnarray*}
\rho([Z_1,X'])&=&\rho([Z_1,[Y,X]])=-\rho([X,[Z_1,Y]]+[Y,[X,Z_1]])\nonumber\\
&=&\rho([Y,[Z_1,X]])=[\rho(Y),\rho([Z_1,X])]= [\rho(Y),[Z_2,\rho(X)]]\nonumber\\
&=&-[\rho(X),[\rho(Y),Z_2]]-[Z_2,[\rho(X),\rho(Y)]]=[Z_2,\rho([Y,X])]\nonumber\\
&=&[Z_2,\rho(X')].\nonumber
\end{eqnarray*}
We thus have that $\rho$ is holomorphic in the $X'$-direction. The fact that $\mathfrak{p}_1$ is an irreducible representation of $\mathfrak{k}_1$ now proves the theorem.
 \end{proof}

\section{Some key low rank cases}
\subsection{Representations of $\mathfrak{su}(1,1)$}
The root system of $\mathfrak{sl}(2,\mathbb{C})$ is  $A=\{\alpha,-\alpha\}$ and the fundamental weight is $\omega=\frac{\alpha}{2}$. The irreducible complex representation $\rho_k$ of highest weight $k$ can be realised as the symmetric tensor product $V^{\odot k}$ of the standard representation $V\simeq \mathbb{C}^2$. 
Restricting these representations to the real form $\mathfrak{su}(1,1)$ we get the irreducible complex representations of $\mathfrak{su}(1,1)$. Let $\mathfrak{S}_k$ be the symmetric group. It acts on $V^{\otimes k}$ by permuting the factors, i.e. 
$$\sigma(v_1\otimes ... \otimes v_k):=v_{\sigma(1)}\otimes ... \otimes v_{\sigma(k)}.$$
For $v,w\in V$ we define 
$$v^lw^{k-l}:=\frac{1}{k!}\sum_{\sigma\in \mathfrak{S}_k}{\sigma(v\otimes ...\otimes v\otimes w\otimes ... \otimes w)}\in V^{\odot k}$$
where there are $l$ copies of $v$ and $k-l$ copies of $w$ in the product. Let $\{e_1,e_2\}$ be an orthonormal basis for $V$ with respect to $F$, the Hermitian form invariant under $\mathfrak{su}(1,1)$, $e_1$ being positive and $e_2$ negative. We can extend $F$ to a Hermitian form on $V^{\otimes k}$ in the natural way, 
$$F(v_1\otimes...\otimes v_k,w_1\otimes...\otimes w_k):=\prod_1^k{F(v_i,w_i)}.$$
$\mathfrak{su}(1,1)$ preserves this form. If $k=2l$ is even we have an orthogonal basis $$\{f_0,f_1,...,f_{k}\}:=\{e_1^k,e_1^{k-2}e_2^2,...,e_2^k,e_1^{k-1}e_2,e_1^{k-3}e_2^3,...,e_1e_2^{k-1}\}$$ for $V^{\odot k}$ with $\{f_0,f_1,...,f_{l}\}$ positive and $\{f_{l+1},f_{l+2},...,f_{k}\}$ negative. Hence $F$ is of signature $(l+1,l)$ and the representation defines a homomorphism into $\mathfrak{su}(l+1,l)$.
If $k=2l-1$ is odd we have an orthogonal basis $$\{h_1,...,h_{k+1}\}:=\{e_1^{2l-1},e_1^{2l-3}e_2^2,...,e_1e_2^{2l-2},e_1^{2l-2}e_2,e_1^{2l-4}e_2^3,...,e_2^{2l-1}\}$$ for $V^{\odot k}$ with $\{h_1,h_2,...,h_{l}\}$ positive and $\{h_{l+1},h_{l+2},...,h_{k}\}$ negative. Hence $F$ is of signature $(l,l)$ and the representation defines a homomorphism into $\mathfrak{su}(l,l)$.

\begin{thm}\label{suett}
Let $\rho_k\colon \mathfrak{su}(1,1)\rightarrow \mathfrak{su}(p,q)$ be an irreducible complex representation of highest weight $k$. Then $\rho_k$ is tight if and only if $k$ is odd.
\end{thm}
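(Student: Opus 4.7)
The plan is to apply Corollary \ref{diagdcor}. Since $\mathfrak{g}_1=su(1,1)$ has rank one, its only diagonal disc is the identity (up to the antiholomorphic involution $\theta$), so $d_1^J Z_{su(1,1)}=\pm Z_{su(1,1)}$, and the criterion reduces to checking
\begin{equation*}
|\langle \rho_k(Z_1),Z_2\rangle|=|\langle d_2(Z_1),Z_2\rangle|
\end{equation*}
for a diagonal disc $d_2\colon su(1,1)\to su(p,q)$, where I write $Z_1=Z_{su(1,1)}$ and $Z_2=Z_{su(p,q)}$. All four matrices can be simultaneously put in diagonal form in the bases already fixed in Section 5.1, so the whole question reduces to a single finite sum of products of diagonal entries.

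To set up, let $H=\mathrm{diag}(i,-i)\in su(1,1)$, so $Z_1=\alpha H$ for some nonzero $\alpha\in\mathbb{R}$. From the weight bases $\{f_i\}$ and $\{h_j\}$ constructed just above the statement, the invariant Hermitian form on $V^{\odot k}$ has signature $(l+1,l)$ when $k=2l$ and $(l,l)$ when $k=2l-1$, which determines $(p,q)$. In these bases $\rho_k(Z_1)$ is diagonal, with imaginary entries $i\alpha(k-4j)$ on the positive block and $i\alpha(k-4j-2)$ on the negative block, while $Z_2=\tfrac{i}{p+q}\mathrm{diag}(qI_p,-pI_q)$. A short direct computation, grouping the blocks separately, shows that in either parity $|\langle d_2(Z_1),Z_2\rangle|=|\alpha|\min(p,q)=|\alpha|l$.

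The decisive step is then to evaluate the two block sums of the diagonal of $\rho_k(Z_1)$. If $k=2l$ is even, the positive-block coefficients $k-4j$ for $j=0,\ldots,l$ form an arithmetic progression symmetric around zero, hence sum to zero, and the same cancellation happens on the negative block. Thus $\langle \rho_k(Z_1),Z_2\rangle=0\neq |\alpha|l$, and $\rho_k$ is not tight. If $k=2l-1$ is odd, the two sums instead evaluate to $\alpha l$ and $-\alpha l$, and pairing these with the diagonal entries $\pm\tfrac{i}{2}$ of $Z_2$ in $su(l,l)$ gives $|\langle \rho_k(Z_1),Z_2\rangle|=|\alpha|l$, matching the diagonal-disc value; tightness then follows from Corollary \ref{diagdcor}. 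The only genuine obstacle is bookkeeping — keeping the weight-basis ordering and the normalisation of $Z_2$ straight — since the underlying content is simply that the arithmetic progression $k,k-4,\ldots,-k$ has zero sum precisely when $k$ is even.
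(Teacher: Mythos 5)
Your proposal is correct and follows essentially the same route as the paper: both apply Corollary \ref{diagdcor}, diagonalize $\rho_k(Z)$ in the weight basis $\{f_i\}$ (resp.\ $\{h_i\}$), and compare the resulting trace pairing with that of the diagonal disc, the only difference being that you write out explicitly the arithmetic-progression sums that the paper dismisses as ``a quick calculation.'' The block sums, the signature bookkeeping, and the normalization of $Z_{p,q}$ all check out against the paper's conventions.
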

\begin{proof}
To see if these representations are tight we need to calculate the image of the complex structure. With respect to the basis $\{e_1,e_2\}$ we can write the complex structure as the matrix
$Z=\frac{i}{2}\left(\begin{array}{cc}
1&0\\
0&-1
\end{array}\right)$. 
The action on $V^{\odot k}$ is $\rho_k(Z) (e_1^{k-m} e_2^{m})=\frac{i}{2}(k-2m)(e_1^{k-m} e_2^{m})$. With respect to the basis $\{f_i\}$ we get the matrix 
$\rho_k(Z)=\frac{i}{2}\mbox{diag}(k,k-4,...,-k,k-2,k-6,...,2-k)=:\frac{i}{2}\left(\begin{array}{cc}
A_k&0\\
0&B_k
\end{array}\right)$ for $k$ even. The block form is with respect to the positive respectively the negative base vectors and will be used later. For $k$ odd we get
$\rho_k(Z)=\frac{i}{2}\mbox{diag}(k,k-4,...,2-k,k-2,k-6,...,-k) =:\frac{i}{2}\left(\begin{array}{cc}
C_k&0\\
0&D_k
\end{array}\right)$.
For $\mathfrak{su}(n,n)$ the complex structure is 
$Z_{\mathfrak{su}(n,n)}=\frac{i}{2}\left(\begin{array}{cc}
I_n&0\\
0&-I_n
\end{array}\right)$
and for $\mathfrak{su}(n+1,n)$ it is  
$Z_{\mathfrak{su}(n+1,n)}=\frac{i}{2n+1}\left(\begin{array}{cc}
nI_{n+1}&0\\
0&-(n+1)I_n
\end{array}\right)$.
A quick calculation shows that $|\langle \rho_{2l-1} \circ d_1 (Z), Z_{\mathfrak{su}(l,l)}\rangle|=|\langle d_2( Z), Z_{\mathfrak{su}(l,l)}\rangle|=\frac{l}{2}$ and $0=|\langle \rho_{2l}\circ d_1 (Z), Z_{\mathfrak{su}(l+1,l)}\rangle|\neq |\langle d_2 (Z), Z_{\mathfrak{su}(l+1,l)}\rangle|=\frac{l}{2}$. Hence by Corollary \ref{diagdcor} $\rho_k$ is tight if and only if $k$ is odd.
 \end{proof}

\subsection{Representations of $\mathfrak{su}(1,1)\oplus \mathfrak{su}(1,1)$}

An irreducible representation $(\rho,W)$ of $\mathfrak{su}(1,1)\oplus \mathfrak{su}(1,1)$ is the tensor product of two irreducible representations $(\rho_k,V)$ and $(\rho_l,U)$ of $\mathfrak{su}(1,1)$. 
If $V$ is equipped with a $\mathfrak{su}(1,1)$-invariant Hermitian form of signature $(a,b)$ and $U$ one of signature $(c,d)$ then $W=V\otimes U$ has a canonical $\mathfrak{su}(1,1)\oplus \mathfrak{su}(1,1)$-invariant Hermitian form of signature $(ac+bd,ad+bc)$, so $\rho$ defines a homomorphism into $\mathfrak{su}(ac+bd,ad+bc)$.
\begin{thm}\label{suett2}
Let $\rho=\rho_k\boxtimes\rho_l \colon \mathfrak{su}(1,1)\oplus \mathfrak{su}(1,1)\rightarrow \mathfrak{su}(r,s)=\mathfrak{su}(W,F)$ be an irreducible complex representation. Then $\rho$ is tight if and only if $k$ is odd and $l=0$ or vice versa.
\end{thm}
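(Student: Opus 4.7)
The plan is to apply Corollary \ref{diagdcor} together with Theorem \ref{nya} and Theorem \ref{suett}, splitting into cases by $(k,l)$. If $l=0$, then $\rho=\rho_k\circ p_1$ where $p_1\colon su(1,1)\oplus su(1,1)\to su(1,1)$ is projection onto the first factor. Since $p_1$ pulls $\kappa_{su(1,1)}$ back to the K\"ahler class of the first factor (which has the same norm), $p_1$ is tight; as $su(1,1)$ is simple, Lemma \ref{factor12} reduces the tightness of $\rho$ to that of $\rho_k$, which by Theorem \ref{suett} holds iff $k$ is odd. The case $k=0$ follows by swapping factors. It remains to prove that $\rho$ is not tight when $k,l\geq 1$.

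When $k+l$ is even I would apply Theorem \ref{nya} using the tight, injective, holomorphic diagonal $\iota\colon su(1,1)\to su(1,1)\oplus su(1,1)$, $X\mapsto(X,X)$. The Clebsch--Gordan decomposition $\rho\circ\iota=\bigoplus_{j=0}^{\min(k,l)}\rho_{k+l-2j}$ consists of irreducible representations all of whose highest weights have the parity of $k+l$, hence are positive and even; each summand is therefore nontight by Theorem \ref{suett}, and Theorem \ref{nya} forces $\rho$ to be nontight.

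The main obstacle is the case $k+l$ odd with $k,l\geq 1$, for which the Clebsch--Gordan summands all have odd weight and Theorem \ref{nya} applied to the diagonal gives no obstruction. Here I would apply Corollary \ref{diagdcor} directly. By the swap automorphism of $\mathfrak{g}_1$ we may assume $k=2p-1$ and $l=2q$ with $p,q\geq 1$. The signatures of $V^{\odot k}$ and $V^{\odot l}$ being $(p,p)$ and $(q+1,q)$ give $V^{\odot k}\otimes V^{\odot l}$ signature $(p(2q+1),p(2q+1))$, so $|\langle d_2(Z),Z_{r,s}\rangle|=p(2q+1)/2$. On the tensor basis $\{e_1^ae_2^{k-a}\otimes e_1^ce_2^{l-c}\}$ a basis vector is positive iff $(k+l)-(a+c)$ is even, and $\rho(d_1^\pm Z)=\rho_k(Z)\otimes I\pm I\otimes\rho_l(Z)$ is diagonal with eigenvalue $\tfrac{i}{2}((2a-k)\pm(2c-l))$. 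The pairing argument from the proof of Theorem \ref{suett} reduces the criterion to $|\langle\rho(d_1^\pm Z),Z_{r,s}\rangle|=|S^\pm|/2$, where
\begin{equation*}
S^\pm=\sum_{\substack{0\leq a\leq k,\;0\leq c\leq l\\(k+l)-(a+c)\,\text{even}}}\bigl((2a-k)\pm(2c-l)\bigr).
\end{equation*}
Splitting by the parities of $a$ and $c$ and using $\sum_{i=1}^p(2i-1)=p^2$ and $\sum_{i=0}^p 2i=p(p+1)$, heavy cancellation between the even-$a$/odd-$c$ and odd-$a$/even-$c$ contributions yields $|S^\pm|=p$ for both signs. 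Since $p<p(2q+1)$ for $q\geq 1$, neither complex structure on $\mathfrak{g}_1$ satisfies the tightness criterion, and $\rho$ is not tight. The main technical difficulty is organizing this parity-constrained bookkeeping and verifying both signs yield the same value $p$.
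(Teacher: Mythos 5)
Your proposal is correct and follows essentially the same route as the paper: the even-parity case is killed exactly as in the paper via the Clebsch--Gordan decomposition along the diagonal together with Theorems \ref{suett} and \ref{nya}, and the mixed-parity case is settled by the same direct computation of $\langle \rho d_1^{J}Z, Z_{r,s}\rangle$ for both complex structures via Corollary \ref{diagdcor}, where your parity bookkeeping indeed gives $|S^{\pm}|=p$, i.e. the paper's values $\pm p/2$ against $p(2q+1)/2$. The only (harmless) deviations are organizational: you treat $l=0$ separately through tightness of the projection and Lemma \ref{factor12} rather than as the $q=0$ instance of the computation, and you sum eigenvalues by parity classes instead of the paper's explicit reordered block-matrix basis.
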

\begin{proof}
Let $\iota_1$ denote the diagonal embedding of $\mathfrak{su}(1,1)$ into $\mathfrak{su}(1,1) \oplus \mathfrak{su}(1,1)$, $X\mapsto (X,X)$, and $\iota_2$ the map $X\mapsto (X,-X^{t})$. Then $\iota_1$ is tight for $\mathfrak{su}(1,1)\oplus \mathfrak{su}(1,1)$ equipped with the complex structure $(Z,Z)$, and $\iota_2$ is tight for $\mathfrak{su}(1,1)\oplus \mathfrak{su}(1,1)$ equipped with the complex structure $(Z,-Z)$.

We first consider the case $k,l$ both odd or both even. Composing $\rho\circ \iota_i$ we get a representation of $\mathfrak{su}(1,1)$ that decomposes into irreducible representations of highest weights $k+l,k+l-2,...,|k-l|$, see \cite[p. 332]{B12}. All of these are of even highest weight hence $\rho$ is nontight by Theorems \ref{suett} and \ref{nya}.

Now consider the case of $l=2p-1$ odd and $k=2q$ even. We have that  $(r,s)=(p(q+1)+pq,pq+p(q+1))=(2pq+p,2pq+p)=(r,r)$.
Depending on the choice of complex structure on $\mathfrak{su}(1,1)\oplus \mathfrak{su}(1,1)$ we have two different diagonal discs $\iota_1$ and $\iota_2$.
We will now choose a natural basis for $W$ in terms of the bases $\{f_j\},\{h_i\}$ defined in the previous subsection. This basis is chosen such that $\rho\circ\iota_{1}(Z)$ and $\rho\circ\iota_{2}(Z)$ can be expressed by tensor products of $A_k,B_k,C_l,D_l$, the matrices defined in the proof of Theorem \ref{suett}, with appropriately sized identity matrices. 
Let 
\begin{eqnarray*}
\{e_{i+pj}\}&:=&\{f_j\otimes h_i | i=1,...,p\, , j=0,...,q\},\\
\{e_{i+pj-p}\}&:=&\{f_j\otimes h_i | i=p+1,...,2p\, ,j=q+1,...,2q\},\\
\{e_{i+pj+2pq}\}&:=&\{f_j\otimes h_i | i=p+1,...,2p\, ,j=0,...,q\},\\
\{e_{i+pj+p+pq}\}&:=&\{f_j\otimes h_i | i=1,...,p\, ,j=q+1,...,2q\}.
\end{eqnarray*}
We have that the first $2pq+p$ vectors are positive and the last $2pq+q$ are negative.
With this choice of basis we get the following matrix description 
\begin{eqnarray*}
\rho\circ \iota_1 (Z)=\frac{i}{2}\mbox{diag}(A_k\otimes I_{p}+I_{q+1}\otimes C_l,B_k\otimes I_{p}+I_{q}\otimes D_l,\\
A_k\otimes I_{p}+I_{q+1}\otimes D_l,B_k\otimes I_{p}+I_{q}\otimes C_l).
\end{eqnarray*}
Recall that for matrices $X,Y\in\mathfrak{su}(r,r)$ we have $\langle X,Y\rangle=4r\mbox{tr}(XY)$. We get
\begin{eqnarray*}
\frac{1}{4r}\langle \rho\circ \iota_1 (Z), Z_{\mathfrak{su}(r,r)}\rangle&=&
\frac{1}{4}(\mbox{tr}(A_k)p+\mbox{tr}(C_l)(q+1)+\mbox{tr}(B_k)p+\mbox{tr}(D_l)q-\\
&&\mbox{tr}(A_k)p-\mbox{tr}(D_l)(q+1)-\mbox{tr}(B_k)p-\mbox{tr}(C_l)q)\\
&=&\frac{1}{4}(\mbox{tr}(C_l)-\mbox{tr}(D_l))=\frac{1}{4}(p-(-p))=\frac{p}{2}.
\end{eqnarray*}
 
For the other choice of complex structure we have the matrix description
\begin{eqnarray*}
\rho\circ \iota_2 (Z)=\frac{i}{2}\mbox{diag}(A_k\otimes I_{p}+I_{q+1}\otimes -C_l,B_k\otimes I_{p}+I_{q}\otimes -D_l,\\
A_k\otimes I_{p}+I_{q+1}\otimes -D_l, B_k\otimes I_{p}+I_{q}\otimes -C_l)
\end{eqnarray*}
Thus
\begin{eqnarray*}
\frac{1}{4r}\langle \rho\circ \iota_2 (Z), Z_{\mathfrak{su}(r,r)}\rangle&=&\frac{1}{4}(\mbox{tr}(A_k)p-\mbox{tr}(C_l)(q+1)+\mbox{tr}(B_k)p-\mbox{tr}(D_l)q\\
&&-\mbox{tr}(A_k)p+\mbox{tr}(D_l)(q+1)-\mbox{tr}(B_k)p+\mbox{tr}(C_l)q)\\
&=&-\frac{1}{4}(\mbox{tr}(C_l)-\mbox{tr}(D_l))=-\frac{p}{2}.
\end{eqnarray*}

We have $$\frac{1}{4r}\langle  d_2 (Z), Z_{\mathfrak{su}(r,r)}\rangle=\frac{1}{4r}\langle  Z_{\mathfrak{su}(r,r)},Z_{\mathfrak{su}(r,r)}\rangle=\frac{p}{2}(2q+1).$$
From Corollary \ref{diagdcor} we have that $\rho$ is tight if and only if $\frac{p}{2}=\frac{p}{2}(2q+1)$ or equivalently $k=2q=0$.

 \end{proof}

\subsection{Representations of $\mathfrak{sp}(4,\mathbb{R})$}
For $\mathfrak{sp}(4,\mathbb{C})$ we have the root system $A=\{\pm\alpha_1,\pm\alpha_2,\pm(\alpha_1+\alpha_2),\pm(2\alpha_1+\alpha_2)\}$. We differ from the notation in \cite{B5} and \cite{B6} and let $\alpha_2$ denote the longer noncompact simple root. We have the fundamental weights $\omega_1=\frac{2\alpha_1+\alpha_2}{2}$ and $\omega_2=\alpha_1+\alpha_2$. From \cite[p.~451]{B7} we know that only the irreducible representation of highest weight $(1,0)$ corresponds to a holomorphic map.

\begin{thm}\label{sp4}
Let $\rho \colon \mathfrak{sp}(4,\mathbb{R})\rightarrow \mathfrak{su}(p,q)$ be an irreducible complex representation. If it is tight then it is (anti-) holomorphic.
\end{thm}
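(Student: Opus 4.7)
The plan is to combine Theorems \ref{nya}, \ref{suett}, and \ref{suett2} with a weight-decomposition argument. The irreducible complex representations of $sp(4,\mathbb{R})$ correspond to irreducible representations of $sp(4,\mathbb{C})$ and are classified by their highest weights $\omega_{m_1 m_2}=m_1\omega_1+m_2\omega_2$ with $m_1,m_2\in\mathbb{Z}_{\geq 0}$. As noted above, the only irreducible representation corresponding to a holomorphic map is $\rho_{10}$, which is known to be tight. The task is therefore to show that every other nontrivial $\rho_{m_1 m_2}$ is nontight.

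I would take $\iota\colon su(1,1)\oplus su(1,1)\hookrightarrow sp(4,\mathbb{R})$ to be the tight holomorphic embedding corresponding to the maximal polydisc. Since the Cartan subalgebras of $sl(2,\mathbb{C})\oplus sl(2,\mathbb{C})$ and $sp(4,\mathbb{C})$ coincide after complexification, the $su(1,1)\oplus su(1,1)$-weights of $\rho_{m_1 m_2}\circ\iota$ are exactly the $sp(4,\mathbb{C})$-weights of $\rho_{m_1 m_2}$, read in the coordinates $a_1 e_1+a_2 e_2$ where the two $sl(2)$-factors act via $a_1$ and $a_2$ respectively. By Theorems \ref{nya} and \ref{suett2}, $\rho_{m_1 m_2}$ is nontight as soon as the $su(1,1)\oplus su(1,1)$-decomposition of $\rho_{m_1 m_2}\circ\iota$ contains some irreducible summand $\rho_k\otimes\rho_l$ that is not of the form $\rho_{\mathrm{odd}}\otimes\rho_0$ or $\rho_0\otimes\rho_{\mathrm{odd}}$.

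The highest weight of $\rho_{m_1 m_2}$ is $(m_1+m_2,m_2)$, producing the top $su(1,1)\oplus su(1,1)$-summand $\rho_{m_1+m_2}\otimes\rho_{m_2}$. This summand is tight only if $m_2=0$ and $m_1$ is odd, which disposes of every case in which $m_2\geq 1$ or $m_1$ is even. In the remaining subcase $m_2=0$ and $m_1\geq 3$ odd, I would observe that $(m_1-1,1)=(m_1,0)-\alpha_1$ lies in the $\alpha_1$-string through the highest weight and hence is a weight of $\rho_{m_1,0}$. Since the Weyl-extreme summands $\rho_{m_1}\otimes\rho_0$ and $\rho_0\otimes\rho_{m_1}$ carry only weights with $a_2=0$ or $a_1=0$, the weight $(m_1-1,1)$ must be absorbed by a further summand $\rho_a\otimes\rho_b$ with $a\geq m_1-1\geq 2$ and $b\geq 1$, which again violates the criterion of Theorem \ref{suett2}.

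The main obstacle is this last subcase, where nontightness is not visible from the highest weight alone and requires locating a non-Weyl-extreme weight that forces a mixed summand. Once the enumeration is complete, only $(m_1,m_2)=(1,0)$ survives, so any tight irreducible $\rho$ is equivalent to $\rho_{10}$ as a representation. By the discussion of equivalence classes in Section \ref{prel}, the homomorphism equivalence classes inside the representation class of $\rho_{10}$ are exactly the holomorphic standard embedding $sp(4,\mathbb{R})\hookrightarrow su(2,2)$ and its composition with the antiholomorphic isomorphism $\theta$, yielding the (anti-)holomorphic conclusion.
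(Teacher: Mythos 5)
Your argument is correct: the reduction via Theorem \ref{nya} to the branching of $\rho\circ\iota$ under the polydisc embedding is exactly the paper's mechanism, and your weight bookkeeping (highest weight $(m_1+m_2)e_1+m_2e_2$ giving the summand $\rho_{m_1+m_2}\otimes\rho_{m_2}$; the weight $\omega_{m_1,0}-\alpha_1$ forcing a mixed summand $\rho_a\otimes\rho_b$ with $a\geq 2$ even and $b\geq 1$ odd when $m_2=0$, $m_1\geq 3$ odd) is sound, since any summand containing a weight with both coordinates nonzero fails the criterion of Theorem \ref{suett2}. The only difference from the paper is the case division: the paper treats highest weights $(0,l)$ by restricting to the diagonal-disc subalgebra $\mathfrak{g}(\alpha_1+\alpha_2)\simeq su(1,1)$ and invoking Theorem \ref{suett}, and treats $k\neq 0$ by the polydisc $\mathfrak{g}(\alpha_2)\oplus\mathfrak{g}(2\alpha_1+\alpha_2)$ using the pair of weights $\omega_{k,l}$, $\omega_{k,l-1}$ evaluated on $H_{2\alpha_1+\alpha_2}$, whereas you run everything through the polydisc alone. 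Your route is marginally more uniform (it never needs Theorem \ref{suett} or the second tight regular subalgebra), at the cost of having to locate the non-extreme weight in the residual subcase $m_2=0$, $m_1\geq 3$ odd, which the paper's split absorbs automatically; both are complete proofs, and your concluding appeal to the equivalence discussion (the class of $\rho_{10}$ into $su(2,2)$ containing only the holomorphic embedding and its composition with $\theta$) matches the paper's treatment.
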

\begin{proof}

From \cite[p.~10]{B5} we know that there are two tight regular subalgebras of $\mathfrak{sp}(4,\mathbb{R})$. The first is $\mathfrak{g}(\alpha_1+\alpha_2)\simeq \mathfrak{su}(1,1)$ and the second is $\mathfrak{g}(\{\alpha_2, 2\alpha_1+\alpha_2\})=\mathfrak{g}(\alpha_2)\oplus\mathfrak{g}( 2\alpha_1+\alpha_2)\simeq \mathfrak{su}(1,1)\oplus \mathfrak{su}(1,1)$

We divide the representations $\rho$ into two types, the first type are the representations with highest weight $(0,l)$,
the second type is the remaining ones, with highest weight $(k,l)$ where $k\neq 0$. We exclude the case $(k,l)=(1,0)$ which is tight and (anti-) holomorphic.

To see that a representation of the first type is nontight we restrict it to the tightly embedded regular subalgebra $\mathfrak{g}(\alpha_1+\alpha_2)$. We then move over to the complexification of the algebras and the representation. We have $\rho^{\mathbb{C}}(H_{\alpha_1+\alpha_2})v_{0,l}=l(\alpha_1+\alpha_2)(H_{\alpha_1+\alpha_2})v_{0,l}=2lv_{0,l}$. So we know that $\rho^{\mathbb{C}}$ branches into at least one representation of $\mathfrak{g}(\alpha_1+\alpha_2)^{\mathbb{C}}$ with even highest weight. This means that $\rho$ is nontight by Theorems \ref{suett} and \ref{nya}.

To see that a representation of the second type is nontight we consider the restriction to the regular subalgebra $\mathfrak{g}(2\alpha_1+\alpha_2)$. Again we move over to the complexifications and we have $\rho^{\mathbb{C}}(H_{2\alpha_1+\alpha_2})v_{k,l}=(k\frac{2\alpha_1+\alpha_2}{2}+l(\alpha_1+\alpha_2))(H_{2\alpha_1+\alpha_2})v_{k,l}=(k+l) v_{k,l}$. We also have a weight $\omega_{k,l-1}$ which when restricted to the subalgebra becomes $k+l-1$. One of these numbers is even. If it is $k+l$ we know that it is even and nonzero, if it $k+l-1$ we know that it is nonzero since $(k,l)=(0,1)$ is of the first type and $(1,0)$ excluded. This means that when we branch $\rho$ to the subalgebra $\mathfrak{g}(\alpha_1)\oplus \mathfrak{g}(\alpha_1+2\alpha_2)$ there will appear even $j$:s in the decomposition $\rho|_{\mathfrak{g}(\alpha_1)\oplus\mathfrak{g}(\alpha_1 +2\alpha_2)}=\sum_{(i,j)\in I}{\rho_i\boxtimes \rho_j}\colon \mathfrak{su}(1,1)\oplus \mathfrak{su}(1,1)\rightarrow \mathfrak{su}(p,q)$. This means that $\rho$ is nontight by Theorems \ref{suett2} and \ref{nya}. 
 \end{proof}

\subsection{Representations of $\mathfrak{sp}(4,\mathbb{R})\oplus \mathfrak{su}(1,1)$}
\begin{thm}\label{sp4su1}

Let $\rho=\rho_{i,j}\boxtimes\rho_k\colon \mathfrak{sp}(4,\mathbb{R})\oplus \mathfrak{su}(1,1)\rightarrow \mathfrak{su}(p,q)$ be an irreducible complex representation.
Then $\rho$ is tight if and only if 
$$(i,j,k)=\begin{cases}
(1,0,0)\mbox{ or}\\
 (0,0,k)\mbox{ with $k$ odd.}
\end{cases}$$
In particular, this means that there are no irreducible tight representations $\rho\colon \mathfrak{sp}(4,\mathbb{R})\oplus \mathfrak{su}(1,1)\rightarrow \mathfrak{su}(p,q)$ that are both injective and nonholomorphic.
\end{thm}
\begin{proof}
By our previous analysis of representations of $\mathfrak{sp}(4,\mathbb{R})$ we know that if $(i,j)$ is not equal to $(1,0)$ or $(0,0)$ then either
\begin{enumerate}
\item there exists a diagonal disc $d\colon \mathfrak{su}(1,1)\rightarrow \mathfrak{sp}(4,\mathbb{R})$ such that $\rho_{i,j}\circ d=\sum{\rho_l}$ with some $l$ even and nonzero or
\item there exists a tight and holomorphic embedding $f\colon \mathfrak{su}(1,1)\oplus \mathfrak{su}(1,1)\rightarrow \mathfrak{sp}(4,\mathbb{R})$ such that $\rho_{i,j}\circ f=\sum{\rho_l\boxtimes \rho_m}$ with some $l$ even and nonzero.
\end{enumerate}

In the first case we consider the composition
\begin{equation*}
(\rho_{i,j}\boxtimes \rho_k) \circ (d\oplus Id ) \colon \mathfrak{su}(1,1)\oplus \mathfrak{su}(1,1)\rightarrow \mathfrak{sp}(4,\mathbb{R})\oplus \mathfrak{su}(1,1)\rightarrow \mathfrak{su}(p,q).
\end{equation*}
We get $(\rho_{i,j}\boxtimes \rho_k) \circ (d\oplus Id )=\sum{\rho_l}\boxtimes\rho_k$ with some $l$ even and nonzero. Hence some $\rho_l\boxtimes\rho_k$ is nontight by Theorem \ref{suett2}. This implies that $\rho$ is nontight by Theorem \ref{nya}. 

In the second case we begin by defining $$h\colon \mathfrak{su}(1,1)\oplus \mathfrak{su}(1,1)\rightarrow \mathfrak{sp}(4,\mathbb{R})\oplus \mathfrak{su}(1,1),\, \,(X,Y)\mapsto (f(X,Y),Y).$$ This a tight and holomorphic embedding.
We have 
\begin{eqnarray*}
\rho\circ h(X,Y)&=&\rho_{i,j}\boxtimes \rho_k(f(X,Y),Y)=(\sum_{l,m}{\rho_l\boxtimes \rho_m})\boxtimes \rho_k(X,Y,Y)\\
&=&\sum_{l,m}{\rho_l\boxtimes (\rho_m\otimes \rho_k)}(X,Y).
\end{eqnarray*}
However, $\rho_m\otimes\rho_k$ is not irreducible and can thus be written as $\sum_{n\in I_{m,k}}{\rho_n}$. We thus get 
\begin{eqnarray*}
\rho\circ h=\sum_{l,m}{\rho_l\boxtimes (\rho_m\otimes \rho_k)} 
=\sum_{l,m}\sum_{n\in I_{m,k}}{\rho_l\boxtimes \rho_n}.
\end{eqnarray*}
 Since some $l$ is even and nonzero, we know by Theorems \ref{suett2} and \ref{nya} that $\rho$ is nontight.
That the map is tight for the remaining cases $(i,j,k)=(1,0,0)$ and $(0,0,odd)$ follows from the classification in \cite{B5} and \cite{B8}.
 \end{proof} 

\subsection{Representations of $\mathfrak{su}(2,1)$}

Let $\mathfrak{su}(2,1)^{\mathbb{C}}=\mathfrak{sl}(3,\mathbb{C})=\mathfrak{h}+\sum_{\alpha\in A}{\mathfrak{g}_{\alpha}}$ be a root space decomposition of $\mathfrak{sl}(3,\mathbb{C})$. Here $A=\{\pm\alpha_1,\pm\alpha_2,\pm(\alpha_1+\alpha_2)\}$ where $\alpha_1$ is chosen as the noncompact simple root. The fundamental weights are $\omega_1=\frac{2\alpha_1+\alpha_2}{3}$ and $\omega_2=\frac{\alpha_1+2\alpha_2}{3}$.
The Weyl group of $\mathfrak{sl}(3,\mathbb{C})$ consists of six elements. These send a weight $\omega_{k,l}$ to $\omega_{k,l}$, $\omega_{-k-l,l}$,  $\omega_{l,-k-l}$, $\omega_{-k,k+l}$, $\omega_{k+l,-l}$ and $\omega_{-l,-k}$ respectively.

We recall from \cite[p.~447]{B7} that the only irreducible representations of $\mathfrak{su}(2,1)$ that are (anti-) holomorphic have highest weight $(1,0)$ and $(0,1)$. 
From \cite[p.~8]{B5} we know that there is a tight regular subalgebra $\mathfrak{g}(\alpha_1)\cong \mathfrak{su}(1,1)$. 
\begin{thm}\label{su21}
Let $\rho\colon \mathfrak{su}(2,1)\rightarrow \mathfrak{su}(p,q)$ be an irreducible complex representation. If $\rho$ is tight then it is (anti-) holomorphic.
\end{thm}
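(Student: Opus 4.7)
The plan is to mirror the proofs of Theorems~\ref{sp4} and~\ref{sp4su1}. By \cite{A5} the regular subalgebra $\mathfrak{g}(\alpha_1)\cong su(1,1)$ provides a tight, injective and holomorphic embedding $\iota\colon su(1,1)\hookrightarrow su(2,1)$, so Theorems~\ref{suett} and~\ref{nya} reduce the problem to showing that, for every highest weight $(k,l)\notin\{(0,0),(1,0),(0,1)\}$, the composition $\rho_{k,l}\circ\iota$ contains some irreducible $su(1,1)$-summand of even nonzero highest weight.

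Passing to complexifications, a weight $\mu$ of $\rho_{k,l}^{\mathbb{C}}$ contributes the $sl(2,\mathbb{C})$-weight $\mu(H_{\alpha_1})$; writing $\mu=\omega_{k,l}-m\alpha_1-n\alpha_2$ and using $\alpha_2(H_{\alpha_1})=-1$ this value equals $k-2m-n$. If $N(j)$ denotes the total multiplicity of weights of $\rho_{k,l}$ at $H_{\alpha_1}$-value $j$, then the multiplicity of $\rho_r$ in $\rho_{k,l}|_{\mathfrak{g}(\alpha_1)}$ is $N(r)-N(r+2)$.

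The Weyl orbit of $\omega_{k,l}$ contains the extremal weight $\omega_{k+l,-l}$, whose $H_{\alpha_1}$-value $k+l$ is the maximum possible and is attained with multiplicity one, so $\rho_{k+l}$ is a summand. If $k+l$ is even, then $k+l\geq 2$ under our assumptions and this summand already finishes the argument via Theorems~\ref{suett} and~\ref{nya}. If instead $k+l$ is odd (hence $k+l\geq 3$), I would produce the additional summand $\rho_{k+l-1}$: since $N(k+l+1)=0$ it suffices to exhibit a single weight of $\rho_{k,l}$ of $H_{\alpha_1}$-value $k+l-1$. For $l\geq 1$ the second-to-last weight $\omega_{k,l}-(l-1)\alpha_2=\omega_{k+l-1,-l+2}$ on the $\alpha_2$-string through $\omega_{k,l}$ works; for $l=0$ (so $k\geq 3$ odd) one instead uses $\omega_{k,0}-\alpha_1-\alpha_2=\omega_{k-1,-1}$, obtained by one $\alpha_1$-step and one $\alpha_2$-step from the highest weight. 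In either case $\rho_{k+l-1}$ is even and nonzero, and Theorems~\ref{suett} and~\ref{nya} again deliver nontightness of $\rho_{k,l}$.

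The main obstacle is precisely the odd $k+l$ case: the most obvious summand $\rho_{k+l}$ is tight, so one must produce a second summand with even nonzero highest weight. The weight-lattice computation above settles this, with only the boundary case $l=0$ needing a short separate check. Taking the contrapositive, a tight $\rho$ must have $(k,l)\in\{(1,0),(0,1)\}$, which is precisely the (anti-)holomorphic case recalled before the theorem.
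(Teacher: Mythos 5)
Your proposal is correct and follows essentially the same route as the paper: restrict along the tight holomorphic regular subalgebra $\mathfrak{g}(\alpha_1)\cong su(1,1)$ and exhibit, for every highest weight other than $(0,0),(1,0),(0,1)$, a weight of even nonzero $H_{\alpha_1}$-value, so that Theorems \ref{suett} and \ref{nya} give nontightness; the paper merely picks slightly different weights (along the $(\alpha_1+\alpha_2)$- and $\alpha_2$-strings through the highest weight, splitting into the cases $k=0$ or $l=0$ versus $k,l\neq 0$ rather than by the parity of $k+l$). One harmless slip: with your conventions $\mu(H_{\alpha_1})=k-2m+n$, not $k-2m-n$, but none of the specific values you actually use ($k+l$, $k+l-1$, $k-1$) are affected.
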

\begin{proof}
Let $\rho_{k,l}$ be an irreducible representation of $\mathfrak{su}(2,1)$ with $k+l\geq 2$. We restrict this representation to $\mathfrak{g}(\alpha_1)$. We now look at the complexifications $\rho_{k,l}^{\mathbb{C}}$ and $\rho_{k,l}^{\mathbb{C}}|_{\mathfrak{g}^{\mathbb{C}}(\alpha_1)}$ to see which weights appear. We divide our analysis of the branching into two cases. 

In the first we assume that either $k=0$ or $l=0$.
Along the line between the weights $\omega_{k,l}$ and $\omega_{-l,-k}$ are the weights $\omega_{k,l}-n(\alpha_1+\alpha_2)=\omega_{k,l}-n(\omega_1+\omega_2)=\omega_{k-n,l-n}$ for $n=1,...,k+l$. Since $k+l\geq 2$ we have in particular that the weights $\omega_{k-1,l-1}$ and $\omega_{k-2,l-2}$ appear in the representation $\rho_{k,l}^{\mathbb{C}}$.
Assume $l=0$, we have that $\omega_{k,l}(H_{\alpha_1})=k$ and $\omega_{k-1,l-1}(H_{\alpha_1})=k-1$. One of these integers must be even and nonzero since $k>1$. If instead $k=0$ we have $\omega_{k-2,l-2}(H_{\alpha_1})=-2$. This means that $\rho|_{\mathfrak{g}(\alpha_1)}$ contains irreducible representations of even highest weight. Hence $\rho$ is nontight by Theorems \ref{suett} and \ref{nya}.

We now consider the case $k\neq 0$, $l\neq 0$.
Along the line between the weights $\omega_{k,l}$ and $\omega_{k+l,-l}$ are the weights $\omega_{k,l}-n\alpha_2=\omega_{k,l}-n(2\omega_2-\omega_1)=\omega_{k+n,l-2n}$ for $n=1,...,l$. Since $l\geq 1$ we have in particular that the weight $\omega_{k+1,l-2}$  appear in the representation $\rho_{k,l}^{\mathbb{C}}$. We have that $\omega_{k,l}(H_{\alpha_1})=k$ and $\omega_{k+1,l-2}(H_{\alpha_1})=k+1$ and that one of these integers must be even and nonzero since $k\geq 1$. Again this implies that $\rho$ is nontight by Theorems \ref{suett} and \ref{nya}.
 \end{proof}
Although it was not needed here the curious reader can find an explicit construction of the irreducible representations of $\mathfrak{su}(2,1)$ in the appendix as part of the proof of Lemma \ref{form}.

\section{Proof of Main Theorem}
Before proving Theorem \ref{main} we recall
the following two theorems that are consequences of the classification in \cite{B5}.
\begin{thm}\label{holo}
Let $\mathfrak{g}$ be a simple Hermitian Lie algebra, $\mathfrak{g}\neq\mathfrak{su}(1,1)$. Then there exists a tight, injective  and holomorphic homomorphism from either $\mathfrak{sp}(4,\mathbb{R}),\mathfrak{sp}(4,\mathbb{R})\oplus \mathfrak{su}(1,1)$ or $\mathfrak{su}(2,1)$ into $\mathfrak{g}$.
\end{thm}
\begin{proof}
We start with the case $\mathfrak{g}=\mathfrak{sp}(2m,\mathbb{R})$. We can assume that $m\geq 2$, since if $m=1$ we have $\mathfrak{sp}(2,\mathbb{R})\simeq \mathfrak{su}(1,1)$. If $m=2$ the identity homomorphism will suffice.
For $m=3$ we see in \cite[p.~10]{B5} that $\mathfrak{sp}(6,\mathbb{R})$ contains a tight regular subalgebra $\mathfrak{sp}(4,\mathbb{R})\oplus \mathfrak{su}(1,1)$.
For $m>3$ $\mathfrak{sp}(2m,\mathbb{R})$ contains a tight regular subalgebra $\mathfrak{sp}(2(m-2),\mathbb{R})\oplus \mathfrak{sp}(4,\mathbb{R})$ \cite[p. ~10]{B5}. Denote the inclusion of this subalgebra by $\iota$. If $m=2k$ is even we have by induction on $k$ a tight and holomorphic embedding $\rho_{k-1}\colon\mathfrak{sp}(4,\mathbb{R})\rightarrow\mathfrak{sp}(4(k-1),\mathbb{R})$. Define $\rho_{k}\colon\mathfrak{sp}(4,\mathbb{R})\rightarrow\mathfrak{sp}(4(k-1),\mathbb{R})\oplus\mathfrak{sp}(4,\mathbb{R})\rightarrow\mathfrak{sp}(4k,\mathbb{R})$ by $\rho_k(X)=\iota(\rho_{k-1}(X),X)$.
If $m=2k+1$ is odd we have by induction on $k$ a tight and holomorphic embedding $\rho_{k-1}\colon\mathfrak{sp}(4,\mathbb{R})\oplus\mathfrak{su}(1,1)\rightarrow\mathfrak{sp}(4(k-1)+2,\mathbb{R})$. Define $\rho_{k}\colon\mathfrak{sp}(4,\mathbb{R})\oplus\mathfrak{su}(1,1)\rightarrow\mathfrak{sp}(4(k-1)+2,\mathbb{R})\oplus\mathfrak{sp}(4,\mathbb{R})\rightarrow\mathfrak{sp}(4k+2,\mathbb{R})$ by $\rho_k(X,Y)=\iota(\rho_{k-1}(X,Y),X)$.
That the composition of these maps is tight follows from Lemmas \ref{lma55} and \ref{factor1}.

If $\mathfrak{g}\neq\mathfrak{sp}(2m,\mathbb{R})$ and of rank at least two we can see in \cite[p.~8-12]{B5} that in each case there is a tight regular subalgebra of $\mathfrak{g}$ isomorphic to $\mathfrak{su}(n,n)$ with $n\geq 2$. Compose this inclusion with the tight and holomorphic map $\mathfrak{sp}(2n,\mathbb{R})\rightarrow\mathfrak{su}(n,n)$ \cite[p.~14]{B5}. Then further compose with the tight and holomorphic inclusion of $\mathfrak{sp}(4,\mathbb{R})$ or $\mathfrak{sp}(4,\mathbb{R})\oplus\mathfrak{su}(1,1)$ into $\mathfrak{sp}(2n,\mathbb{R})$ as described above. The composition of them all then yields the desired result.

The remaining case is when $\mathfrak{g}$ is of rank one and not isomorphic to $\mathfrak{su}(1,1)$. This means that $\mathfrak{g}\simeq\mathfrak{su}(m,1)$ and from \cite[p.~9]{B5} we know that it contains $\mathfrak{su}(2,1)$ as a tight regular subalgebra.   
\end{proof}

\begin{thm}[{\cite[p.~14-15]{B5}}]\label{holo2}
Let $\mathfrak{g}$ be a classical simple Hermitian Lie algebra of tube type. Then there exists a tight and holomorphic homomorphism $\rho\colon\mathfrak{g}\rightarrow\mathfrak{su}(n,n)$ for some $n$.
\end{thm}
Every Hermitian Lie algebra $\mathfrak{g}$ contains maximal (with respect to inclusion) subalgebras $\mathfrak{g}^T$ of tube type which are holomorphically embedded and of the same rank as $\mathfrak{g}$. The inclusion $\mathfrak{g}^T\subset\mathfrak{g}$ is tight and $\mathfrak{g}^T$ is unique up to equivalence. Tube type subalgebras are important as we see in the following structure theorem from \cite[Theorem 9]{B8}.
\begin{thm}\label{factor0}
If $\mathfrak{g}'$ is of tube type and $\rho\colon\mathfrak{g}' \rightarrow\mathfrak{g}$ is a tight homomorphism then there is a maximal tube type subalgebra $\mathfrak{g}^T\subset\mathfrak{g}$ such that  $\rho(\mathfrak{g}')\subset\mathfrak{g}^T$.
\end{thm}

We now have the tools we need to prove Theorem \ref{main}.  We will do this through a series of lemmas. 

\begin{lma}\label{ett}
Let $\rho\colon \mathfrak{g}\rightarrow \mathfrak{su}(p,q)$ be a homomorphism between simple Hermitian Lie algebras, where $\mathfrak{g}\neq \mathfrak{su}(1,1)$. If $\rho$ is tight then it is (anti-) holomorphic. 
\end{lma}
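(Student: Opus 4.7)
The plan is a proof by contradiction that reduces to the low-rank cases analyzed in Section 5 via Theorem \ref{holo}. Assume $\rho$ is tight but not (anti-)holomorphic. By Lemma \ref{positive}, after possibly swapping the complex structure on $\mathcal{X}$, I may assume $\rho$ is tight and positive. Theorem \ref{holo} then furnishes a tight, holomorphic, injective embedding $\eta\colon \mathfrak{g}_0 \hookrightarrow \mathfrak{g}$ with $\mathfrak{g}_0$ one of $sp(4,\mathbb{R})$, $sp(4,\mathbb{R})\oplus su(1,1)$, or $su(2,1)$. Since $\eta$ is tight and $\rho$ is tight and positive, Lemma \ref{factor1} yields that $\rho\circ\eta\colon \mathfrak{g}_0\to su(p,q)$ is tight (and positive).

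To apply the low-rank classifications I first reduce to irreducible pieces. Since $\mathfrak{g}_0$ has only $su(p,q)$ and $sp(2m)$ factors, Theorem \ref{43} gives a factorization $\rho\circ\eta = \iota' \circ \bigoplus_i \sigma_i$, where each $\sigma_i\colon \mathfrak{g}_0 \to su(r_i, s_i)$ is irreducible and $\iota'\colon \bigoplus_i su(r_i, s_i)\hookrightarrow su(p,q)$ is a holomorphic inclusion, strictly positive by Lemma \ref{lma5}. The contrapositive of Lemma \ref{factor3} applied to $\iota'$ gives that $\bigoplus_i\sigma_i$ is tight, and Lemma \ref{factor2} forces each $\sigma_i$ to be tight and positive, with a common sign determined by our normalization.

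Now I invoke the classifications. If $\mathfrak{g}_0 = sp(4,\mathbb{R})$ or $su(2,1)$, Theorems \ref{sp4} and \ref{su21} tell us every tight irreducible $\sigma_i$ is (anti-)holomorphic, and positivity picks out the holomorphic ones, so $\rho\circ\eta$ is holomorphic. If $\mathfrak{g}_0 = sp(4,\mathbb{R})\oplus su(1,1)$, Theorem \ref{sp4su1} says each tight irreducible $\sigma_i$ is either of type $(1,0,0)$, which is holomorphic and trivial on the $su(1,1)$ factor, or of type $(0,0,k)$ with $k$ odd, which is trivial on the $sp(4)$ factor. Since $\mathfrak{g}$ is simple and $\rho$ is tight (hence nonzero), $\rho$ is injective, and $\eta$ is an embedding, so $\rho\circ\eta$ is injective. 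It therefore cannot vanish on the nonzero subalgebra $sp(4)\subset \mathfrak{g}_0$, so at least one $\sigma_i$ must be of type $(1,0,0)$.

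To conclude, I choose a nonzero $X\in\mathfrak{p}_{\mathfrak{g}_0}$ (taking $X\in\mathfrak{p}_{sp(4)}$ in the mixed case). The only $\sigma_i$ acting nontrivially at $X$ are then holomorphic, and combining this with the holomorphicity of $\iota'$ yields
\[
(\rho\circ\eta)([Z_{\mathfrak{g}_0}, X]) = [Z_{su(p,q)}, (\rho\circ\eta)(X)].
\]
Holomorphicity of $\eta$ rewrites the left-hand side as $\rho([Z_{\mathfrak{g}}, \eta(X)])$, so $\rho$ is holomorphic in the nonzero direction $\eta(X)\in\mathfrak{p}_{\mathfrak{g}}$. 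Theorem \ref{nonholo} propagates this to all of $\mathfrak{p}_{\mathfrak{g}}$, contradicting the hypothesis. The main obstacle is the $sp(4,\mathbb{R})\oplus su(1,1)$ case, where tight irreducible representations need not themselves be holomorphic; this is defused by the observation that all such non-holomorphic tight representations annihilate the $sp(4)$ factor, so injectivity of $\rho\circ\eta$ forces at least one genuinely holomorphic $(1,0,0)$ component to appear, which is enough to trigger Theorem \ref{nonholo}.
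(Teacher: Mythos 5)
Your proposal is correct and follows essentially the same route as the paper: embed one of $sp(4,\mathbb{R})$, $sp(4,\mathbb{R})\oplus su(1,1)$, $su(2,1)$ tightly and holomorphically via Theorem \ref{holo}, use tightness of the composite together with the low-rank classifications (Theorems \ref{sp4}, \ref{su21}, \ref{sp4su1}), and propagate holomorphicity with Theorem \ref{nonholo}. The only difference is presentational: the paper argues the contrapositive directly (nonholomorphicity of $\rho$ forces nonholomorphicity of $\rho\circ\iota$, contradicting the classifications), while you make explicit the reduction to irreducible constituents via Theorem \ref{43}, Lemmas \ref{factor2}--\ref{factor3}, the sign bookkeeping, and the injectivity argument guaranteeing a $(1,0,0)$ constituent in the mixed case, all of which the paper uses implicitly.
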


\begin{proof}
Assume that $\rho$ is tight and nonholomorphic.
By Theorem \ref{holo} there exists a tight and holomorphic embedding $\iota\colon\mathfrak{g}'\rightarrow\mathfrak{g}$, where $\mathfrak{g}'=\mathfrak{su}(2,1)$, $\mathfrak{sp}(4,\mathbb{R})$ or $\mathfrak{sp}(4,\mathbb{R})\oplus \mathfrak{su}(1,1)$.
We can decompose $\rho\circ\iota$ into irreducible representations and factor it using Theorem \ref{43} to get the following commutative diagram.\\
\centerline{\xymatrix{
\mathfrak{g}\ar[rr]^\rho &&\mathfrak{su}(r,s)\\
\mathfrak{g}'\ar[u]^{\iota}\ar[rr]^{(\rho^1,...,\rho^n)} &&\oplus \mathfrak{su}(r_i,s_i)\ar[u]_{\iota'} 
}}
Here $\iota'$ is a holomorphic embedding. If $\rho$ is tight and nonholomorphic, we have by Lemma \ref{factor12} that $\rho\circ\iota=\iota'\circ (\rho^1,...,\rho^n)$ is tight, and by Lemma \ref{nonholo} it is nonholomorphic when restricted to any simple factor of $\mathfrak{g}'$. By Lemma \ref{factor2} and \ref{factor3} tightness of $\iota'\circ(\rho^1,...,\rho^n)$ implies that either all $\rho^i$ are tight and positive or all $\rho^i$ are tight and negative.

If $\mathfrak{g}'=\mathfrak{su}(2,1)$ or $\mathfrak{sp}(4,\mathbb{R})$, this implies that either all $\rho^i$ are tight and holomorphic or all $\rho^i$ are tight and antiholomorphic by Theorem \ref{su21} or \ref{sp4}. This contradicts that $\iota'\circ(\rho^1,...,\rho^n)$ is nonholomorphic.
 
If $\mathfrak{g}'=\mathfrak{sp}(4,\mathbb{R})\oplus\mathfrak{su}(1,1)$, tightness implies that all $\rho^i$ are of the form
$\rho_{0,0}\boxtimes\rho_{odd}$ and $\rho_{1,0}\boxtimes\rho_0$. For both of these types of $\rho^i$ we have that $r_i=s_i$. By Lemma \ref{equivs} the equivalence class of representations of $\rho_{1,0}\boxtimes\rho_0$ contains both holomorphic and antiholomorphic homomorphisms. If all $\rho^i$ are positive, respectively negative, this implies that all $\rho_{1,0}\boxtimes\rho_0$  are holomorphic, respectively antiholomorphic. As the $\rho_{0,0}\boxtimes\rho_{odd}$ representations are trivial in the $\mathfrak{sp}(4,\mathbb{R})$-factor they are always always both holomorphic and antiholomorphic in the $\mathfrak{sp}(4,\mathbb{R})$-factor. Thus $\iota'\circ(\rho^1,...,\rho^n)=\rho\circ\iota$ is either holomorphic or antiholomorphic in the $\mathfrak{sp}(4,\mathbb{R})$-factor which is a contradiction.
 \end{proof}

\begin{lma}\label{tva}
Let $\rho\colon \mathfrak{g}_1\rightarrow\mathfrak{g}_2$ be a homomorphism between simple Lie algebras. Assume that $\mathfrak{g}_2$ is classical and that $\mathfrak{g}_1\neq \mathfrak{su}(1,1)$ and of tube type. If $\rho$ is tight then it is (anti-) holomorphic.
\end{lma}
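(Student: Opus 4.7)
The plan is to reduce to the case handled by Lemma~\ref{ett} (target equal to $su(p,q)$) by post-composing with a tight holomorphic embedding into some $su(n,n)$ supplied by Theorem~\ref{holo2}. Since $\mathfrak{g}_1$ is of tube type and $\rho$ is tight, Theorem~\ref{factor0} places the image $\rho(\mathfrak{g}_1)$ inside a tightly and holomorphically embedded tube-type subalgebra $\mathfrak{g}_2^T\subset\mathfrak{g}_2$. A direct inspection of the list of classical simple Hermitian Lie algebras shows that $\mathfrak{g}_2^T$ is itself classical, simple and of tube type, so Theorem~\ref{holo2} supplies a tight, holomorphic embedding $\iota\colon\mathfrak{g}_2^T\to su(n,n)$ for some $n$.

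Next, write $\rho=j\circ\rho'$ with $j\colon\mathfrak{g}_2^T\hookrightarrow\mathfrak{g}_2$ the tight, holomorphic inclusion. Since $\mathfrak{g}_2^T$ is simple, Lemma~\ref{factor12} applied to $\rho=j\circ\rho'$ gives that $\rho'\colon\mathfrak{g}_1\to\mathfrak{g}_2^T$ is tight. The embedding $\iota$ is tight and, being holomorphic and injective, strictly positive by Lemma~\ref{lma5}; in particular positive. Lemma~\ref{factor1} then yields that $\iota\circ\rho'\colon\mathfrak{g}_1\to su(n,n)$ is tight. As $\mathfrak{g}_1$ is simple Hermitian with $\mathfrak{g}_1\neq su(1,1)$, Lemma~\ref{ett} applies and shows that $\iota\circ\rho'$ is (anti-)holomorphic.

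It remains to transfer (anti-)holomorphicity back to $\rho$. Assume $\iota\circ\rho'$ is holomorphic (the anti-holomorphic case is identical up to sign). Using holomorphicity of $\iota$, for every $X\in\mathfrak{p}_1$ one has
$$\iota\bigl(\rho'([Z_1,X])\bigr)=[Z_{su(n,n)},\iota(\rho'(X))]=\iota\bigl([Z_{\mathfrak{g}_2^T},\rho'(X)]\bigr),$$
and injectivity of $\iota$ forces $\rho'([Z_1,X])=[Z_{\mathfrak{g}_2^T},\rho'(X)]$. Thus $\rho'$ is holomorphic, and post-composing with the holomorphic inclusion $j$ makes $\rho$ holomorphic as well. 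The only non-routine ingredient is the initial identification of $\mathfrak{g}_2^T$ as simple classical of tube type, which is handled by inspecting the four families of classical simple Hermitian Lie algebras; everything else is a formal combination of Theorems~\ref{holo2} and~\ref{factor0} with the lemmas already proved in Section~3.
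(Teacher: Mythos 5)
Your argument is correct and follows essentially the same route as the paper: use Theorem~\ref{factor0} to push the image into a tight, holomorphically embedded tube-type subalgebra $\mathfrak{g}_2^T$, embed that into $su(n,n)$ via Theorem~\ref{holo2}, and reduce to Lemma~\ref{ett} using the composition lemmas of Section~3. The only difference is that you spell out explicitly the factorization $\rho=j\circ\rho'$ and the transfer of (anti-)holomorphicity back through the injective holomorphic embeddings, steps the paper leaves implicit in its contradiction argument.
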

\begin{proof}
Assume that $\rho$ is tight and nonholomorphic. By Theorem \ref{factor0} there is a subalgebra $\mathfrak{g}_2^T\subset\mathfrak{g}_2$ of tube type containing the image of $\mathfrak{g}_1$. We can thus factor $\rho$ as $\iota\circ\rho'\colon\mathfrak{g}_1\rightarrow\mathfrak{g}_2^T\rightarrow\mathfrak{g}_2$ where $\iota$ is the inclusion homomorphism $\iota\colon\mathfrak{g}_2^T\rightarrow \mathfrak{g}_2$ and $\rho'$ is $\rho$ with restricted codomain. Since $\mathfrak{g}_2$ is simple, $\mathfrak{g}_2^T$ is too. That $\iota$ and $\iota\circ\rho'$ are tight implies that $\rho'$ is tight by Lemma \ref{factor12}. Since $\iota$ is holomorphic and $\iota\circ\rho'$ is nonholomorphic, $\rho'$ must be nonholomorphic. By Theorem \ref{holo2} we can find a tight and holomorphic map $\eta\colon\mathfrak{g}_2^T\rightarrow\mathfrak{su}(n,n)$ for some $n$. The composition $\eta\circ\rho'$ will then be nonholomorphic and tight by Lemma \ref{factor12}. But this contradicts Lemma \ref{ett}, hence $\rho$ can not be tight and nonholomorphic.
 \end{proof}
\begin{lma}\label{bla}
There are no tight homomorphisms $\rho\colon \mathfrak{su}(n,1)\rightarrow \mathfrak{so}^*(2p)$ for $n\geq 2$, $p\geq 4$.
\end{lma}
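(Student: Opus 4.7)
The plan is to assume, for contradiction, that $\rho\colon su(n,1)\to so^*(2p)$ is tight for some $n\geq 2$, $p\geq 4$, and to reduce everything to the already-understood case of tight maps out of $su(2,1)$. First I would apply Theorem~\ref{holo} to $su(n,1)$: since it is simple and not $su(1,1)$, one of $sp(4,\mathbb{R})$, $sp(4,\mathbb{R})\oplus su(1,1)$, $su(2,1)$ embeds tightly and holomorphically, but $sp(4,\mathbb{R})$ has real rank $2$ while $su(n,1)$ has rank $1$, so only $su(2,1)$ can fit, producing a tight holomorphic $\eta\colon su(2,1)\hookrightarrow su(n,1)$. Lemma~\ref{factor12} then gives that $\sigma=\rho\circ\eta\colon su(2,1)\to so^*(2p)$ is tight, so it suffices to rule out tight maps $su(2,1)\to so^*(2p)$ for $p\geq 4$.

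Next I would use the standard holomorphic inclusion $\iota\colon so^*(2p)\hookrightarrow su(p,p)$ from the defining representation and decompose the $su(2,1)$-representation $\iota\circ\sigma$ on $\mathbb{C}^{2p}$ orthogonally via Theorem~\ref{43}: $\iota\circ\sigma=\sum_i\sigma^i$ with $\sigma^i\colon su(2,1)\to su(r_i,s_i)$ irreducible. The structural core is to show each $\sigma^i$ has highest weight $(1,0)$ or $(0,1)$, hence is three-dimensional of signature $(2,1)$ or $(1,2)$. When $p$ is even, $so^*(2p)$ is of tube type and $\iota$ is tight (its Dynkin index $2$ matches the rank ratio $p/(p/2)$, consistent with Theorem~\ref{holo2}), so $\iota\circ\sigma$ is tight by Lemma~\ref{factor12}, each $\sigma^i$ is tight by Lemma~\ref{factor2}, and Theorem~\ref{su21} forces the claim. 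For $p$ odd, $\iota$ fails to be tight; here I would fall back on Theorem~\ref{factor0} applied to $\sigma\circ d\colon su(1,1)\to so^*(2p)$, with $d$ the tight holomorphic diagonal disc of $su(2,1)$, so that this tube-type composition lands in the tube subalgebra $so^*(2p-2)$. Composing with the tight holomorphic embedding $so^*(2p-2)\hookrightarrow su(p-1,p-1)$ from Theorem~\ref{holo2} and combining Theorem~\ref{nya} with Theorem~\ref{suett} through the $sl(3)\downarrow sl(2)$ branching used in the proof of Theorem~\ref{su21} restricts the highest weights of the $\sigma^i$ the same way.

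Finally I would exploit the resulting rigidity. Invariance of the skew form on $\mathbb{C}^{2p}$ pairs each copy of the standard representation $V$ with one of its dual $V^*$ (self-pairings vanish by Schur, since the standard and dual of $su(2,1)$ are non-isomorphic), forcing equal numbers $k$ of each and $2p=6k$, i.e.\ $p=3k$. Up to $so^*(2p)$-conjugation, $\sigma$ is equivalent to the $k$-fold diagonal of the natural embedding $su(2,1)\hookrightarrow su(3,1)\cong so^*(6)$, and a routine Kähler-form computation gives $\sigma^*\omega_{so^*(2p)}=k\,\omega_{su(2,1)}$. Tightness requires this scaling to equal $r_{so^*(2p)}/r_{su(2,1)}=\lfloor p/2\rfloor$, so $k=\lfloor 3k/2\rfloor$, which forces $k\leq 1$ and hence $p\leq 3$, contradicting $p\geq 4$. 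The main obstacle is the odd-$p$ branch of the structural claim, where $\iota$ is no longer tight and Lemma~\ref{factor12} cannot directly transfer tightness of $\sigma$ to $\iota\circ\sigma$; the detour through Theorem~\ref{factor0} and the tube subalgebra $so^*(2p-2)$ is what carries the odd case.
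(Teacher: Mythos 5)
Your overall skeleton is close to the paper's: reduce to $su(2,1)$ via a tight holomorphic $su(2,1)\hookrightarrow su(n,1)$, and for $p$ odd route the tight disc through Theorem~\ref{factor0} into the maximal tube subalgebra $so^*(2p-2)$ and its tight holomorphic embedding into $su(p-1,p-1)$ (your even-$p$ shortcut via tightness of $so^*(2p)\subset su(p,p)$ is a legitimate variant of the paper's bump $so^*(2p)\subset so^*(2(p+1))$). The genuine gap is in your endgame. Having argued that the $2p$-dimensional representation decomposes into copies of $\rho_{10}$, $\rho_{01}$ (and possibly trivials), you assert that up to $SO^*(2p)$-conjugation $\sigma$ is the $k$-fold diagonal of $su(2,1)\hookrightarrow so^*(6)$ and read off $\sigma^*\omega = k\,\omega$. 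This is exactly the equivalence-versus-conjugacy trap the paper flags in its subsection ``On equivalence'': the underlying complex representation does not determine the homomorphism up to conjugacy in the target (compare $z\mapsto(z,z)$ versus $z\mapsto(z,\bar z)$), and indeed individual $so^*(6)$-blocks may be composed with the antiholomorphic automorphism, producing non-conjugate $\sigma$'s with pullback constant $k-2j$. So the equality $\sigma^*\omega=k\,\omega$ is not established; what your contradiction actually needs is only the bound $\|\sigma^*\kappa\|\le k\pi$, but that too requires an argument (a simultaneous block decomposition compatible with both the Hermitian and the skew form, in the spirit of Theorem~\ref{43}, or a direct evaluation of the pairing in Corollary~\ref{diagdcor}), none of which is supplied. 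Relatedly, ``$2p=6k$'' silently excludes trivial summands, which are not ruled out a priori (the skew form is nondegenerate on the trivial isotypic part); they do not destroy the inequality but must be carried along.

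A second, smaller issue is that in the odd case the key claim ``restricts the highest weights of the $\sigma^i$ the same way'' is asserted rather than proved: the tight $su(1,1)$-representation you obtain inside $su(p-1,p-1)$ is the $(2p-2)$-dimensional standard representation of $so^*(2p-2)$, not the restriction of the $2p$-dimensional one, so you still have to relate the two (the standard representation of $so^*(2p)$ restricted to $so^*(2p-2)$ is the standard one plus exactly two trivial summands) before the $sl(3)\downarrow sl(2)$ branching from Theorem~\ref{su21} together with Theorems~\ref{suett} and~\ref{nya} can be applied. This bookkeeping is precisely what the paper's commutative diagram does. Once it is in place, note that the paper then concludes by pure counting -- tightness forces $p-1$ copies of $\rho_1$, each of which sits inside a $3$-dimensional $su(2,1)$-component, so $3(p-1)\le 2p$, i.e.\ $p\le 3$ -- which bypasses both the conjugacy question and any K\"ahler-scaling computation; adopting that count would repair your final step.
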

\begin{proof}
Consider the composition $\rho\circ\iota\colon \mathfrak{su}(2,1)\rightarrow \mathfrak{su}(n,1)\rightarrow \mathfrak{so}^*(2p)$ where $\iota$ is the canonical tight inclusion. Then $\rho$ is tight if and only if $\rho\circ\iota$ is tight by Lemma \ref{factor12}. This reduces our search of tight homomorphisms to the case $n=2$.

If $p$ is even we consider the composition $\iota\circ\rho\colon \mathfrak{su}(2,1)\rightarrow \mathfrak{so}^*(2p)\rightarrow \mathfrak{so}^*(2(p+1))$, where $\iota$ is the canonical tight inclusion. Then $\rho$ is tight if and only if $\iota\circ\rho$ is tight. This reduces our search of tight homomorphisms to the case $p\geq 5$ and odd.

Consider the restriction of $\rho$ to a tightly embedded regular subalgebra isomorphic to $\mathfrak{su}(1,1)$, as in the proof of Theorem \ref{su21}. Assume that $\rho$ is tight. Then the image of $\mathfrak{su}(1,1)$ is contained in a maximal subalgebra of $\mathfrak{so}^*(2p)$ that is of tube type by Theorem \ref{factor0}. From \cite{B5} we know that this algebra is $\mathfrak{so}^*(2(p-1))$. This algebra can be tightly and holomorphically embedded in $\mathfrak{su}(p-1,p-1)$. We can extend this to a homomorphism $i\colon \mathfrak{so}^*(2p)\rightarrow \mathfrak{su}(p,p)$, though this extension is not tight. We get the following commutative diagram of homomorphisms.\\
\centerline{\xymatrix{
\mathfrak{su}(2,1)\ar[r]^\rho &\mathfrak{so}^*(2p)\ar[r]^i &\mathfrak{su}(p,p)\\
\mathfrak{su}(1,1)\ar[u]^{\iota_1}\ar[r]^{\rho|} &\mathfrak{so}^*(2(p-1))\ar[r]^{i|}\ar[u]_{\iota_2} &\mathfrak{su}(p-1,p-1)\ar[u]^{\iota_3}
}}
We have by Lemma \ref{factor12} and commutativity of the diagram the following equivalences:
$$\rho \mbox{ tight } \Leftrightarrow\rho\circ\iota_1 \mbox{ tight }\Leftrightarrow\iota_2\circ\rho| \mbox{ tight } \Leftrightarrow \rho| \mbox{ tight } \Leftrightarrow i|\circ\rho| \mbox{ tight}$$
As a representation, $i$ is just the the standard representation. So $i|\circ\rho|$ decomposes into the same irreducible parts as $\rho|$. Since it is tight, we have by Theorem \ref{suett} the decomposition into irreducible representations
$$i|\circ\rho|=\sum_{k\mbox{ odd}}{n_k\rho_k}+n_0\rho_0.$$ 
Here the $n_k$ denote the multiplicities of the representations. Composing with $\iota_3$ adds two trivial representations
$$\iota_3\circ i|\circ\rho|=\sum_{k\mbox{ odd}}{n_k\rho_k}+(n_0+2)\rho_0.$$ 
From the proof of Theorem \ref{su21} we know that the only representations of $\mathfrak{su}(2,1)$ that branches into odd- and zero-highest weight representations when restricted to $\mathfrak{su}(1,1)$ are $\rho_{1,0}$, $\rho_{0,1}$ and $\rho_{0,0}$. The first two both branch into $\rho_1+\rho_0$ and the last one to $\rho_0$ when restricted to $\mathfrak{su}(1,1)$.
Thus $i\circ\rho$ is of the form $i\circ\rho=k\rho_{1,0}+(n-k)\rho_{0,1}+l\rho_{0,0}$.
We get $i\circ\rho\circ\iota_1=n\rho_1+(n+l)\rho_0$.
Since the diagram commutes we have
$$n\rho_1+(n+l)\rho_0=\sum_{k\mbox{ odd}}{n_k\rho_k}+(n_0+2)\rho_0.$$ 
Thus $n_1=n$, $n_0+2=n+l$ and $n_k=0$ for $k\neq 1,0$. Since $i|\circ\rho|$ is tight we have that $n_1=n=p-1$, for dimensional reasons we have $3n+l=2p$. These equalities together give us $p-3+l=0$. But $p\geq 5$ and $l$ is a nonnegative integer. We thus get a contradiction. Hence $\rho$ can not be tight.
 \end{proof}

\begin{proof}[Proof of Theorem \ref{main}]
Lemma \ref{tva} covers the case when $\mathfrak{g}_1$ is of tube type. It remains to show that if $\rho\colon\mathfrak{g}_1\rightarrow\mathfrak{g}_2$ is a tight homomorphism where $\mathfrak{g}_1$ is not of tube type, then $\rho$ must be (anti-) holomorphic.

Consider first the case $\mbox{rank}(\mathfrak{g}_1)\geq 2$.
We know that $\mathfrak{g}_1$ contains a maximal tube type subalgebra $\mathfrak{g}_1^T$ with $\mbox{rank}(\mathfrak{g}_1^T)=\mbox{rank}(\mathfrak{g}_1)$. The inclusion homomorphism $\iota\colon\mathfrak{g}_1^T\rightarrow\mathfrak{g}_1$ is tight and holomorphic. If $\rho$ is tight and nonholomorphic the composition $\rho\circ\iota$ will also be tight and nonholomorphic by Lemma \ref{factor12}. But this contradicts Lemma \ref{tva}, hence $\rho$ can not be tight and nonholomorphic.

Next we treat the case $\mbox{rank}(\mathfrak{g}_1)=1$ and $\mathfrak{g}_2$ of tube type.
Since $\mathfrak{g}_2$ is of tube type there is a tight and holomorphic homomorphism $\iota\colon\mathfrak{g}_2\rightarrow\mathfrak{su}(n,n)$ by Theorem \ref{holo2}. If $\rho\colon\mathfrak{g}_1\rightarrow\mathfrak{g}_2$ is tight and nonholomorphic then $\iota\circ\rho$ will also be tight and nonholomorphic by Lemma \ref{factor12}. This contradicts Lemma \ref{ett}, hence $\rho$ can not be tight and nonholomorphic. 

Finally, consider the case $\mbox{rank}(\mathfrak{g}_1)=1$ and $\mathfrak{g}_2$ not of tube type.
That the rank of $\mathfrak{g}_1$ is one implies that $\mathfrak{g}_1$ is isomorphic to $\mathfrak{su}(m,1)$ for some $m\geq 2$. There are two possibilities for $\mathfrak{g}_2$, either $\mathfrak{g}_2=\mathfrak{su}(p,q)$ with $p>q$ or $\mathfrak{g}_2=\mathfrak{so}^{*}(2p)$ with $p$ odd.
The first case is covered by Lemma \ref{ett}. 
Since $\mathfrak{so}^*(6)\simeq \mathfrak{su}(3,1)$ we can assume $p\geq 5$ in the second case. It is thus covered by Lemma \ref{bla}.
 \end{proof}

\begin{proof}[Proof of Theorem \ref{bonus}]
Assume that $\rho\colon\mathfrak{g}\rightarrow \mathfrak{e}_{6 (-14)}$ is tight and nonholomorphic. Let $\mathfrak{g}^T$ be a maximal tube type subalgebra of $\mathfrak{g}$. Denote by $\iota\colon\mathfrak{g}^T\rightarrow\mathfrak{g}$ the tight and holomorphic inclusion homomorphism. Since $\rho$ is tight and nonholomorphic, $\rho\circ\iota$ is tight and nonholomorphic by Lemma \ref{factor12}. By Theorem \ref{factor0} $\rho\circ\iota(\mathfrak{g}^T)\subset \mathfrak{e}_{6(-14)}^T$, where $\mathfrak{e}_{6(-14)}^T$ is a maximal tube type subalgebra of $\mathfrak{e}_{6(-14)}$. Since $\mathfrak{e}_{6(-14)}$ is not of tube type, $\mathfrak{e}_{6(-14)}^T$ is a proper subalgebra and hence must be classical. Indeed, $\mathfrak{e}_{6(-14)}^T=\mathfrak{so}(2,8)$ checking root multiplicities. 
We can thus consider $\rho\circ\iota$ as a homomorphism $\rho\circ\iota\colon\mathfrak{g}^T\rightarrow\mathfrak{e}^T_{6(-14)}$. This homomorphism is tight and nonholomorphic by Lemma \ref{factor12}. But this contradicts Lemma \ref{tva}, hence $\rho$ can not be tight and nonholomorphic.
 \end{proof}

\section{Appendix}
\begin{proof}[Proof of Lemma \ref{form}]
We begin with the case $\mathfrak{g}=\mathfrak{su}(p,q)$. The algebra $\mathfrak{su}(p,q)$ is defined as the Lie algebra of trace-free linear endomorphisms of a complex vector space $V\simeq\mathbb{C}^{p+q}$ preserving a nondegenerate Hermitian form $F$ of signature $(p,q)$. Let $e_1,...,e_{p+q}$ denote an orthonormal basis for $(V,F)$ and let $n+1=p+q$. The complexification of $\mathfrak{su}(p,q)$ is then $\mathfrak{sl}(n+1,\mathbb{C})$. The action of $\mathfrak{su}(p,q)$ on $V$ extends naturally to an action of $\mathfrak{sl}(n+1,\mathbb{C})$ on $V$. We call $V$ the \emph{standard representation} of $\mathfrak{sl}(n+1,\mathbb{C})$. Let $\omega_1,...,\omega_n$ denote the set of fundamental weights for $\mathfrak{sl}(n+1,\mathbb{C})$. The irreducible representations of highest weights $\omega_1,...,\omega_n$ can be realised by $V,V\wedge V,...,V^{\wedge n}$, \cite[p.~223]{B16}. The restrictions of these representations to $\mathfrak{su}(p,q)$ then carry invariant nondegenerate Hermitian forms constructed as follows.

We extend $F$ to $V^{\otimes d} $ by $F(v_1\otimes ...\otimes v_d,w_1\otimes ...\otimes w_d):=\prod_i{F(v_i,w_i)}$, $d=1,2,...,n$. It is clear that this extension is invariant. Next we need to show that $F$ is nondegenerate on $V^{\wedge d}$. We first show that $F(v,v)\neq 0$ for $v=e_1\wedge ...\wedge e_d$. We have
\begin{eqnarray*}
F(v,v)&=&F(e_1\wedge ...\wedge e_d,e_1\wedge ...\wedge e_d)\\
&=&F(\frac{1}{d!}\sum_{\sigma\in\mathfrak{S}_d}{\mbox{sgn}(\sigma)e_{\sigma(1)}\otimes...\otimes e_{\sigma(d)}},\frac{1}{d!}\sum_{\eta\in\mathfrak{S}_d}{\mbox{sgn}(\eta)e_{\eta(1)}\otimes...\otimes e_{\eta(d)}})\\
&=&\frac{1}{(d!)^2}\sum_{\eta,\sigma\in\mathfrak{S}_d}{\mbox{sgn}(\sigma)\mbox{sgn}(\eta)\prod_i{F(e_{\sigma(i)},e_{\eta(i)})}}\\
&=&\frac{1}{(d!)^2}\sum_{\sigma\in\mathfrak{S}_d}{\mbox{sgn}(\sigma)\mbox{sgn}(\sigma)\prod_i{F(e_{\sigma(i)},e_{\sigma(i)})}}\\
&=&\pm\frac{1}{d!}\neq 0
\end{eqnarray*}
Suppose that $F$ is degenerate on $V^{\wedge d}$, i.e. there exists a vector $w\in V^{\wedge d}$ such that $F(w,u)=0$ for all $u\in V^{\wedge d }$. Using the invariance of $F$ we have $F(Xw,u)+F(w,Xu)=F(Xw,u)=0$ for all $u$. Since $V^{\wedge d}$ is an irreducible representation we can repeat this process to get a basis for $V^{\wedge d}$ consisting of vectors that are orthogonal to all of $V^{\wedge d}$. But his contradicts that $F(v,v)\neq 0$, hence $F$ must be nondegenerate.

Next we want to construct a representation of highest weight $\sum{m_i\omega_i}$. Let $V_d=V^{\wedge d}$ and let $v_d$ denote the highest weight vector of $V_d$.
Consider the (reducible) representation $W=V_1^{\otimes m_1}\otimes ... \otimes V_n^{\otimes m_n}$ and the vector $w=v_1^{\otimes m_1}\otimes ... \otimes v_n^{\otimes m_n}$. The vector $w$ is easily seen to be a weight vector with weight $\sum{m_i\omega_i}$. Take any other weight vector of the form $w'=w_{1,1}\otimes ...\otimes w_{1,m_1}\otimes w_{2,1}\otimes ...\otimes w_{n,m_n}\in W$. Then 
\begin{eqnarray*}
\mbox{weight of}(w')&=&\sum_i\sum_{k=1}^{m_i}{\mbox{weight of}(w_{i,k})}\leq\sum_i\sum_{k=1}^{m_i}{\mbox{weight of}(v_i)}\\&=&\sum_i{m_i\mbox{weight of}(v_i)}=\mbox{weight of}(w)
\end{eqnarray*}
On the other hand, any weight vector is a linear combination of weight vectors of the above form with the same weight.
Thus the weight $\sum{m_i\omega_i}$ is the highest one and the irreducible representation $W'\subset W$ of $\mathfrak{g}$ containing $w$ will be of highest weight $\sum{m_i\omega_i}$. We have that $F(w,w)=\prod_i{F(v_i,v_i)^{m_i}}\neq 0$. By our previous reasoning this means that $F$ is an invariant nondegenerate Hermitian form on $W'$. This concludes the case $\mathfrak{g}=\mathfrak{su}(p,q)$.

Next we treat the case $\mathfrak{g}=\mathfrak{sp}(2n,\mathbb{R})$. The Lie algebra $\mathfrak{sp}(2n,\mathbb{R})$ is defined as the Lie algebra of linear endomorphisms of a real vector space $V\simeq \mathbb{R}^{2n}$ that preserves a nondegenerate skewsymmetric bilinear form $B$.
We can extend $B$ and the action of $\mathfrak{sp}(2n,\mathbb{R})$ to $V^\mathbb{C}=V\otimes_\mathbb{R} \mathbb{C}$. We construct an invariant nondegenerate Hermitian form $F$ on $V^\mathbb{C}$ from $B$ by
$F(v,w):=i B(\bar{v},w)$. We can extend the action of $\mathfrak{sp}(2n,\mathbb{R})$ on $V^\mathbb{C}$ to $\mathfrak{sp}(2n,\mathbb{C})$. This is the \emph{standard representation} of $\mathfrak{sp}(2n,\mathbb{C})$. From this point on the construction is completely analogous to that of $\mathfrak{su}(p,q)$ and is omitted.

For a nonsimple $\mathfrak{g}=\mathfrak{g}_1\oplus ...\oplus \mathfrak{g}_m$ we have that an irreducible representation $V$ of $\mathfrak{g}$ is the tensor product of irreducible representations $V_i$ of the $\mathfrak{g}_i$. We can equip each $V_i$ with a $\mathfrak{g}_i$-invariant Hermitian form $F_i$ as above. We get a $\mathfrak{g}$-invariant form $F$ on $V=V_1\otimes ... \otimes V_m $ by $F(v_1\otimes ... \otimes v_m,w_1\otimes ... \otimes w_m):=\prod_i{F_i(v_i,w_i)}$.

What remains is to show that the form $F$ on an irreducible representation $(V,\rho)$ is unique up to scaling. Suppose that $F_1, F_2$ are two invariant nondegenerate Hermitian forms on $V$. Fixing a basis of $V$ we can represent these forms using matrices as $F_i(v,w)=v^*A_iw$ for some Hermitian matrices $A_i$. That the $F_i$ are invariant translates to 
\begin{eqnarray*}
\rho(X)^*A_1+A_1\rho(X)=0\\
\rho(X)^*A_2+A_2\rho(X)=0
\end{eqnarray*}
Since the $F_i$ are nondegenerate the matrices $A_i$ are invertible and we can rewrite this as 
\begin{eqnarray*}
\rho(X)^*=-A_1\rho(X)A_1^{-1}\\
\rho(X)^*=-A_2\rho(X)A_2^{-1}
\end{eqnarray*}
 We thus get $(A_2^{-1}A_1)\rho(X)= \rho(X)(A_2^{-1}A_1)$,
i.e. $\rho(X)$ commutes with $A_2^{-1}A_1$ for all $X\in\mathfrak{g}$.
This implies that $\rho(X)$ leaves the generalized eigenspaces of $A_2^{-1}A_1$ invariant. Since $\rho$ is irreducible, $A_2^{-1}A_1$ can only have one generalized eigenspace. We can thus express $A_2^{-1}A_1 $ as $A_2^{-1}A_1=\lambda I+N$, where $\lambda\in\mathbb{C}$ and $N$ is some nilpotent matrix. Since $\rho(X)$ commutes with $A_2^{-1}A_1$ it must commute with $N$. This implies that $\mbox{ker}(N)$ is invariant under $\rho(X)$. Since $\rho$ is irreducible this means that $\mbox{ker}(N)$ is either $0$ or $V$. Since by assumption $N$ is a nilpotent matrix we have that $\mbox{ker}(N)=V$ which implies that $N=0$. Thus $A_2^{-1}A_1=\lambda I$ and $A_1=\lambda A_2$. Since $A_1$ and $A_2$ are Hermitian matrices $\lambda$ must be real.
\end{proof}

\begin{proof}[Proof of Theorem \ref{43}]  
Start with an arbitrary decomposition into irreducible invariant subspaces $U=\bigoplus_{i\in I}{U_i}$. There are two possibilities, either $F|_{U_i}$ is degenerate for all $i$ or there exists some $i$ such that it is nondegenerate.
In the second case we claim that $U_i^{\perp}$, the orthogonal complement of $U_i$, is an invariant subspace. Taking $v\in U_i^{\perp}$, we have 
\begin{equation*}
0=F(\rho(X)u,v)+F(u,\rho(X)v)=F(u,\rho(X)v)
\end{equation*}
for all $u\in U_i$, since $U_i$ is invariant. This proves the claim.

We can now take a decomposition of $U_i^{\perp}$ into irreducible subspaces to get a new decomposition of $U=U_i\oplus_F \bigoplus_{j\in I'}{V_j}$. Here $\oplus_F$ denotes a direct sum that is orthogonal with respect to $F$.
Repeating this process we get, with some reindexing, $U=U_1\oplus_F ...\oplus_F U_n \oplus_F \bigoplus_{j\in I'}{V_j}$ where all the $U_i$ are nondegenerate and all the $V_j$ are degenerate irreducible subspaces.

Our next claim is that for an irreducible degenerate space $V_j$ we have $F|_{V_j}\equiv 0$.
Since $F|_{V_j}$ is degenerate there exists a $v\in V_j$ such that $v \perp V_j$. If $\mbox{dim}(V_j)=1$ we are done, if not let $X\in\mathfrak{g}$ be such that $\rho(X)v\not\in \mathbb{C}v$. We have that $0=F(\rho(X)v,u)+F(v,\rho(X)u)=F(\rho(X)v,u)$ for all $u\in V_j$, which means that $\rho(X)v\perp V_j$. Using the fact that $V_j$ is irreducible and repeating this process we can conclude that $V_j\perp V_j$. 
 
Since $F$ is a nondegenerate form there must be some $k\in I'$ such that $F|_{V_j\oplus V_k}\not\equiv 0$.
We claim that $F|_{V_j\oplus V_k}$ is nondegenerate. 
Assume otherwise, then there are vectors $u_j\in V_j$ and $u_k\in V_k$ such that $u_j +u_k \perp V_j\oplus V_k$.
We have $0=F(u_j+u_k,u)=F(u_j,u)$ for all $u\in V_k$ which implies $u_j\perp V_k$. We have further
\begin{equation*}
0=F(\rho(X)u_j,u)+F(u_j,\rho(X)u)=F(\rho(X)u_j,u)
\end{equation*}  
for all $u\in V_k$ which means that $\rho(X)u_j\perp V_k$. Repeating this we get $V_j\perp V_k$ which means $F|_{V_j\oplus V_k}\equiv 0$ contradicting our assumption.

We are left with the case where there are two irreducible invariant degenerate spaces $V,W$ such that $V\oplus W$ is nondegenerate.
We claim that $\mbox{dim} (V)=\mbox{dim} (W)=:n$ and that the signature of $F|_{V\oplus W}$ is $(n,n)$.
This follows from that the maximal dimension of a null subspace of a space of signature $(p,q)$ is $\mbox{min}(p,q)$.
From Lemma \ref{form} we know that there exists an nondegenerate Hermitian form $F_V$ of say signature $(p,q)$ on $V$ invariant under $\rho(\mathfrak{g})$. Take a basis $\{v_i\}_{i=1}^n$ for $V$ orthonormal with respect to $F_V$ with the first $p$ vectors having positive quadratic $F(v_i,v_i)=1$, and let $\{w_j'\}_{j=1}^n$ be some choice of basis for $W$.
Representing $F$ as a matrix with respect to the basis $\{v_1,...,v_n,w_1',...,w_n'\}$ we have $F(u_1,u_2)=u_1^*
\left(\begin{array}{cc}
0&C\\
C^*&0
\end{array}\right) u_2$
for some invertible matrix $C$. Choosing a new basis $\{w_j=C^{-1}w_j'\}_{j=1}^n$ for $W$ we can represent $F$ with the matrix 
$\left(\begin{array}{cc}
0&I\\
I&0
\end{array}\right)$.
An arbitrary element $\rho(X)$ leaves the subspaces $V$ and $W$ invariant.
This means $\rho(X)$ is of the form $\rho(X)=
\left(\begin{array}{cc}
A&0\\
0&B
\end{array}\right)$.
It preserves the form $F$ which implies that $B=-A^*$.
We have further that $A$ preserves the form $F_V$ which means that $A$ is of the form $A=
\left(\begin{array}{cc}
K&Z\\
Z^*&L
\end{array}\right)$ with $K\in M_{p,p}(\mathbb{C}),L\in M_{q,q}(\mathbb{C})$ skewsymmetric and $tr(K+L)=0$.
Now consider the following basis, let $\{e_1,...,e_{2n}\}=\{v_1+w_1,...,v_p+w_p,v_{p+1}-w_{p+1},...,v_n-w_n,v_1-w_1,...,v_p-w_p,v_{p+1}+w_{p+1},...,v_n+w_n\}$. With respect to this basis we have that $\rho(X)=
\left(\begin{array}{cccc}
K&Z&0&0\\
Z^*&L&0&0\\
0&0&K&Z\\
0&0&Z^*&L
\end{array}\right).
$
We see that the subspaces spanned by $\{e_i\}_{i=1}^n$ and $\{e_j\}_{j=n+1}^{2n} $ are invariant and a quick calculation shows that they are nondegenerate.
We have now established an inductive procedure of picking out orthogonal nondegenerate subspaces out of an arbitrary decomposition. This proves the theorem.
 \end{proof}

\renewcommand{\bibname}{{\sc References}}
\begingroup
\let\chapter\section

\begin{bibdiv}
\begin{biblist}

\bib{B9}{article}{
   author={Burger, Marc},
   author={Iozzi, Alessandra},
   author={Wienhard, Anna},
   title={Surface group representations with maximal Toledo invariant},
   journal={Ann. of Math. (2)},
   volume={172},
   date={2010},
   number={1},
   pages={517--566},
   issn={0003-486X},
}

\bib{B8}{article}{
   author={Burger, Marc},
   author={Iozzi, Alessandra},
   author={Wienhard, Anna},
   title={Tight homomorphisms and Hermitian symmetric spaces},
   journal={Geom. Funct. Anal.},
   volume={19},
   date={2009},
   number={3},
   pages={678--721},
   issn={1016-443X},
}

\bib{B11}{article}{
   author={Burger, M.},
   author={Monod, N.},
   title={Continuous bounded cohomology and applications to rigidity theory},
   journal={Geom. Funct. Anal.},
   volume={12},
   date={2002},
   number={2},
   pages={219--280},
   issn={1016-443X},
}

\bib{B14}{article}{
   author={Clerc, Jean-Louis},
   author={{\O}rsted, Bent},
   title={The Gromov norm of the Kaehler class and the Maslov index},
   journal={Asian J. Math.},
   volume={7},
   date={2003},
   number={2},
   pages={269--295},
   issn={1093-6106},
}

\bib{B4}{article}{
   author={Domic, Antun},
   author={Toledo, Domingo},
   title={The Gromov norm of the Kaehler class of symmetric domains},
   journal={Math. Ann.},
   volume={276},
   date={1987},
   number={3},
   pages={425--432},
   issn={0025-5831},
}

\bib{B15}{article}{
   author={Dynkin, E. B.},
   title={Semisimple subalgebras of semisimple Lie algebras},
   language={Russian},
   journal={Mat. Sbornik N.S.},
   volume={30(72)},
   date={1952},
   pages={349--462 (3 plates)},
}

\bib{B16}{book}{
   author={Fulton, William},
   author={Harris, Joe},
   title={Representation theory},
   series={Graduate Texts in Mathematics},
   volume={129},
   note={A first course;
   Readings in Mathematics},
   publisher={Springer-Verlag},
   place={New York},
   date={1991},
   pages={xvi+551},
   isbn={0-387-97527-6},
   isbn={0-387-97495-4},
}

\bib{B12}{book}{
   author={Hall, Brian C.},
   title={Lie groups, Lie algebras, and representations},
   series={Graduate Texts in Mathematics},
   volume={222},
   note={An elementary introduction},
   publisher={Springer-Verlag},
   place={New York},
   date={2003},
   pages={xiv+351},
   isbn={0-387-40122-9},
}

\bib{B5}{article}{
   author={Hamlet, Oskar},
   title={Tight holomorphic maps, a classification},
   journal={J. Lie Theory},
   volume={23},
   date={2013},
   number={3},
   pages={639--654},
   issn={0949-5932},
}


\bib{B3}{book}{
   author={Helgason, Sigurdur},
   title={Differential geometry, Lie groups, and symmetric spaces},
   series={Graduate Studies in Mathematics},
   volume={34},
   note={Corrected reprint of the 1978 original},
   publisher={American Mathematical Society},
   place={Providence, RI},
   date={2001},
   pages={xxvi+641},
   isbn={0-8218-2848-7},
}

\bib{B13}{article}{
   author={Hamlet, Oskar},
   author={Okuda, Takayuki},
   title={Tight maps and holomorphicity, exceptional spaces},
   status={work in progress},
}

\bib{B6}{article}{
   author={Ihara, Shin-Ichiro},
   title={Holomorphic imbeddings of symmetric domains},
   journal={J. Math. Soc. Japan},
   volume={19},
   date={1967},
   pages={261--302},
   issn={0025-5645},
}

\bib{B2}{article}{
   author={Kim, Inkang},
   author={Pansu, Pierre},
   title={Density of Zariski density for surface groups},
   journal={Duke Math. J.},
   volume={163},
   date={2014},
   number={9},
   pages={1737--1794},
   issn={0012-7094},
}

\bib{B10}{book}{
   author={Monod, Nicolas},
   title={Continuous bounded cohomology of locally compact groups},
   series={Lecture Notes in Mathematics},
   volume={1758},
   publisher={Springer-Verlag},
   place={Berlin},
   date={2001},
   pages={x+214},
   isbn={3-540-42054-1},
}

\bib{B7}{article}{
   author={Satake, Ichir{\^o}},
   title={Holomorphic imbeddings of symmetric domains into a Siegel space},
   journal={Amer. J. Math.},
   volume={87},
   date={1965},
   pages={425--461},
   issn={0002-9327},
}

\end{biblist}
\end{bibdiv}
\endgroup

\end{document}